\documentclass[a4paper,10pt]{amsart}
\usepackage[utf8]{inputenc}
\usepackage{amsthm}
\usepackage{amsmath}
\usepackage{mathabx}
\usepackage{bm}
\usepackage{amsfonts}
\usepackage{amssymb}
\usepackage{mathtools}
\usepackage{appendix}
\usepackage{mathrsfs}
\usepackage{setspace}
\usepackage{xcolor}
\usepackage{todonotes}
\usepackage{thmtools} 
\usepackage{csquotes}
\usepackage{bbm}
\usepackage[foot]{amsaddr}
\RequirePackage[a4paper,top=2.54cm,bottom=2.54cm,left=1.90cm,right=1.90cm,%
                headsep=1em,includehead,includefoot]{geometry}
\usepackage[pdfdisplaydoctitle,colorlinks,breaklinks,urlcolor=blue,linkcolor=blue,citecolor=blue]{hyperref} 


\newcommand{\E}{\mathcal{E}}
\newcommand{\EE}{\mathbb{E}}

\newcommand{\G}{\mathcal{G}}

\newcommand{\HH}{\mathcal{H}}

\newcommand{\R}{\mathbb{R}}


\let\div\relax

\DeclareMathOperator{\div}{div}

\renewcommand{\epsilon}{\varepsilon}
\renewcommand{\setminus}{\smallsetminus}


\newtheorem{theorem}{Theorem}[section]
\newtheorem{definition}[theorem]{Definition}
\newtheorem{hypothesis}[theorem]{Hypothesis}

\newtheorem{lemma}[theorem]{Lemma}
\newtheorem{proposition}[theorem]{Proposition}
\newtheorem{conjecture}[theorem]{Conjecture}

\theoremstyle{remark}
\newtheorem{remark}[theorem]{Remark}

\numberwithin{equation}{section}


\renewcommand{\P}{\mathbb{P}}
\newcommand{\0}{{\mathcal{H}_0}}

\newcommand{\eps}{\varepsilon}

\DeclareMathOperator{\grad}{\nabla}

\newenvironment{acknowledgements}{%
  \begin{abstract}
}{%
  \end{abstract}
}


\title[LDP for Fluid Dynamic Systems in Bounded Domains]{Large Deviations Principle for the Inviscid Limit of Fluid Dynamic Systems in 2D Bounded Domains}
\author[F. Butori]{Federico Butori}
\address{Scuola Normale Superiore, Piazza dei Cavalieri, 7, 56126 Pisa, Italia}
\email{\href{mailto:federico.butori at sns.it}{federico.butori at sns.it}}
\author[E. Luongo]{Eliseo Luongo}
\address{Scuola Normale Superiore, Piazza dei Cavalieri, 7, 56126 Pisa, Italia}
\email{\href{mailto:eliseo.luongo at sns.it}{eliseo.luongo at sns.it}}

\keywords{Large Deviations, Inviscid Limit, No-Slip Boundary Conditions, Boundary Layer}
\subjclass{60H15, 60F10, 35Q30 (76M35, 35Q31, 35Q35)}
\date\today

\begin{document}

\begin{abstract}
	Using a weak convergence approach, we establish a Large Deviation Principle (LDP) for the solutions of fluid dynamic systems in two-dimensional bounded domains subjected to no-slip boundary conditions and perturbed by additive noise. Our analysis considers the convergence of both viscosity and noise intensity to zero. Specifically, we focus on three important scenarios: Navier-Stokes equations in a Kato-type regime, Navier-Stokes equations for fluids with circularly symmetric flows and Second-Grade Fluid equations. In all three cases, we demonstrate the validity of the LDP, taking into account the critical topology $C([0,T];L^2).$
\end{abstract}

\maketitle

\section{Introduction}\label{sec:introduction}
\subsection{The Problem of the Inviscid Limit}\label{subsec: inviscid limit intro}
A important role in the understanding of the behavior of turbulent fluid is given by the analysis of the so-called inviscid limit. In a naive way, given $u^{NS,\eps}$ and $\Bar{u}$ solutions, in a suitable sense, of the systems below
\begin{align}\label{systems inviscid limit}
        \begin{cases}
        \partial_t u^{NS,\eps}-\eps \Delta u^{NS,\eps}+\nabla p^{NS,\eps}+u^{NS,\eps}\cdot\nabla u^{NS,\eps} & =f^{\eps}\\
        \div u^{NS,\eps}&=0\\
        u^{NS,\eps}(0)&=u_0^{\eps}
    \end{cases}\quad
    \begin{cases}
                \partial_t \bar{u}+\nabla \bar{p}+\bar{u}\cdot\nabla \bar{u}& =\bar{f}\\
        \div \bar{u}&=0\\
        \bar{u}(0)&=\bar{u}_0,
    \end{cases}
\end{align}
the problem of the inviscid limit consists in showing that $u^{NS,\eps}$, the solution of the Navier-Stokes equations, converges to $\Bar{u}$, the solution of the corresponding Euler equations, as $\eps\rightarrow 0$ in the topology $L^{\infty}(0,T;L^2(D))$, $D$ being the domain where the equations evolve. \\
The difficulty of answering to this problem changes drastically considering different boundary conditions.  Some frameworks where the problem above has a positive answer have been presented in \cite{da2009sharp}, \cite{lions1996mathematical}. We sum up their results.
\begin{enumerate}
    \item If previous equations evolve in a two-dimensional domain $D$ without boundary, assuming \begin{align*}
    f^{\eps}= \bar{f}\equiv 0,\ \ u_0^{\eps},\ \bar{u}_0\in H^{k},\ \  \div (u_0^{\eps})=\div (\bar{u}_0)=0,\ \  \lVert u_0^{\eps}-\bar{u}_0\rVert_{H^k}\stackrel{\eps\rightarrow 0}{\rightarrow} 0
\end{align*}  
implies the convergence 
\begin{align*}
    \lVert u^{NS,\eps}- \bar{u}\rVert_{C([0,T];H^k)}\stackrel{\eps\rightarrow 0}{\rightarrow} 0.
\end{align*}
\item If $u^{NS,\eps}$ is a solution of the Navier-Stokes equations with Navier-Boundary conditions, i.e. $
    u^{NS,\eps}\cdot n|_{\partial D}=0,\ \partial_1 u^{NS,\eps}_2-\partial_2 u^{NS,\eps}_1|_{\partial D}=0$, 
    \begin{align*}
        u_0^{\eps}\stackrel{L^2(D)}{\rightarrow}\Bar{u}_0, \quad f^{\eps}\stackrel{L^2((0,T)\times D)}{\rightarrow}\Bar{f},
    \end{align*} then each sequence, $u^{\eps_k}$, has a subsequence, $u^{\eps_{h_k}}$, converging to a weak solution of the Euler equations in $C([0,T];L^2(D))$. Moreover, if the solution of the Euler equation is unique, then $u^{NS,\eps}\rightarrow \Bar{u}$.
\end{enumerate}
In the case of no-slip boundary conditions, i.e. $u^{NS,\eps}|_{\partial D}=0$, the convergence of $u^{NS,\eps}$ to $\Bar{u}$ in the topology $L^{\infty}(0,T;L^2(D))$ is an open problem with few results available: \begin{enumerate}
    \item Unconditioned results. They are based on strong assumptions about the symmetry of the domain and of the data \cite{lopes2008vanishing}, or real analytic data \cite{sammartino1998zero}, \cite{bardos2021inviscid}.
    \item Conditioned results. They are based on stating some criteria about the behavior of the solutions of the Navier-Stokes equations in the boundary layer in order to prove that the inviscid limit holds. This line of research started with the seminal work by Kato \cite{kato1984remarks}, see also \cite{constantin2015inviscid}, \cite{temam1997behavior}, \cite{wang2001kato}, \cite{kelliher2023strong} and the references therein for other results in this direction.
\end{enumerate}
Beside its mathematical interest, the analysis of the inviscid limit in the case of no-slip boundary conditions is a relevant problem also from a physical prospective of the understanding of turbulence. 
The no-slip condition $u^{NS,\eps}|_{\partial D}=0$ provokes large stress
near the boundary, if $u^{NS,\eps}$ is large nearby and this stress, when the viscosity
is small enough, may lead to instabilities and generate vortices. This is the so-called phenomenon of the emergence of a boundary layer: close to the boundary the fluid presents a turbulent behavior for $\eps\rightarrow 0$. The thickness of the boundary layer and some control on the behavior of the fluid in this region are very challenging and mostly open questions, see \cite{bardos2013mathematics} for a review on the topic. \\
Contrary to Navier-Stokes equations, the problem of the inviscid limit in bounded domains and no-slip boundary conditions has been solved for the second grade fluid equations, at least in a suitable regime of the parameters, see \cite{lopes2015approximation}.
The second-grade fluid equations are a model for viscoelastic fluids, with two parameters: $\eps>0$, corresponding to the elastic response, and $\nu>0$, corresponding to viscosity. For such fluids, assuming their density, $\rho$, being constant and equal to $1$, the stress tensor is given by 
\begin{equation*}
    T=-p^{SG,\eps}I+\nu A_1+\eps A_2-\eps A_1^2,
\end{equation*}
where \begin{align*}
    A_1=\frac{\nabla u^{SG,\eps}+\nabla (u^{SG,\eps})^T}{2},\\
    A_2=\partial_t A_1+A_1\nabla u^{SG,\eps}+(\nabla u^{SG,\eps})^T A_1,
\end{align*}
being $p^{SG,\eps}$ the pressure and $u^{SG,\eps}$ the velocity field.
Given this stress tensor, the equations of motion for an incompressible homogeneous fluid of grade 2 are given by
\begin{align}\label{second-grade system}
\begin{cases}
\partial_t v^{SG,\eps}=\nu \Delta u^{SG,\eps}-\operatorname{curl}(v^{SG,\eps})\times u^{SG,\eps}+\nabla p^{SG,\eps}+f^{SG,\eps}\\
\div u^{SG,\eps}=0\\
v^{SG,\eps}=u^{SG,\eps}-\eps\Delta u^{SG,\eps}\\
u^{SG,\eps}|_{\partial D}=0\\ 
u^{SG,\eps}(0)=u_0^{SG,\eps},
\end{cases}    
\end{align}
where $f^{SG,\eps}$ describes some external forces acting on the fluid, see \cite{dunn1974thermodynamics}, \cite{rivlin1997stress} for further details on the physics behind this system.
The analysis of the second-grade fluid equations started with \cite{cioranescu1984existence}, where some results, not restricted to the two-dimensional case, for global existence and uniqueness of solutions of the problem above have been shown. Setting, formally, $\eps=0$ the system above reduces to the Navier-Stokes system. Thus it can be seen as a generalization of the Navier-Stokes equations. Moreover, the convergence of the solution of the second-grade fluid equations to the solution of the Navier-Stokes equations has been shown rigorously in \cite{iftimie2002remarques}. 

\subsection{The Inviscid Limit in the Stochastic Framework}\label{subsec: stochastic inviscid intro}
In the last decades, stochastic forcing have been added to the fluid dynamic systems of the previous section. We refer to \cite[Chapter 5]{flandoli2023stochastic} for some justifications for the introduction of stochastic forcing terms in fluid dynamic models. After establishing the well-posedness of Navier-Stokes equations, Second-Grade fluid equations and Euler equations with Gaussian additive noise, see for example \cite{flandoli1995martingale}, \cite{razafimandimby2012strong}, \cite{bessaih19992} for some results in this direction, a natural question is trying to understand the validity of the inviscid limit in such stochastic models. According to \cite[Chapter 10]{kuksin2006randomly}, the relevant scaling of the parameters in order to study the inviscid limit is the following one
\begin{align}\label{NS introduction}
\begin{cases}
du^{NS,\epsilon}=(\epsilon \Delta u^{NS,\epsilon}-u^{NS,\epsilon}\cdot\nabla u^{NS,\epsilon}+\nabla p^{NS,\epsilon})dt+\sqrt{\eps}dW_t\\
\div u^{NS,\epsilon}=0\\
u^{NS,\epsilon}|_{\partial D}=0\\ 
u^{NS,\epsilon}(0)=u_0,
\end{cases}    
\end{align}

\begin{align}\label{second grade introduction}
\begin{cases}
dv^{SG,\epsilon}=(\nu \Delta u^{SG,\epsilon}-\operatorname{curl}(v^{SG,\epsilon})\times u^{SG,\epsilon}+\nabla p^{SG,\epsilon})dt+\sqrt{\eps}dW_t\\
\div u^{SG,\epsilon}=0\\
v^{SG,\epsilon}=u^{SG,\epsilon}-\eps\Delta u^{SG,\epsilon}\\
u^{SG,\epsilon}|_{\partial D}=0\\ 
u^{SG,\epsilon}(0)=u_0.
\end{cases}    
\end{align}
Difficulties analogous to those described in \autoref{subsec: inviscid limit intro} appear also in the stochastic framework, even considering different scaling of the parameters:
\begin{enumerate}
    \item  The validity of the inviscid limit in the case of Navier-Boundary conditions has been shown in \cite{cipriano2015inviscid}.
    \item  The validity of some conditioned results for the stochastic Navier-Stokes with no-slip boundary conditions has been shown in \cite{luongo2021inviscid}, \cite{wang2023kato}.
    \item The validity of the inviscid limit for the stochastic Second-Grade fluid equations with no-slip boundary conditions under suitable assumptions between $\nu$ and $\eps$ has been shown in \cite{luongo2022inviscid}.
\end{enumerate}
These results can be seen as a sort of law of large numbers for the stochastic systems above. A natural question then is that of understanding of the asymptotic behavior for the probability of large fluctuations away from the zero-noise and zero-viscosity limit. We are then interested in establishing Large Deviations principles for the aforementioned systems focusing, in case, on their relation with some form of Kato-type condition. At the same time, the validity of a Freidlin-Wentzell kind of LDP allows, at least in principle, to characterize rare phenomena arising in these systems in the small noise small viscosity regime, by translating these problems into an optimal control one. This might have several applications, and constitutes one of the main strength of the theory of Large Deviations (see for example the book by Freidlin and Wentzell \cite{FW}). 
In this paper we will study the validity of a Large Deviation Principle for the inviscid limit of Navier-Stokes equations and Second-Grade Fluid Equations with additive Gaussian noise in two-dimensional bounded domains and no-slip boundary, presenting, for the first system, a natural Kato-type condition that closely resembles the ones from classical conditioned results \cite{kato1984remarks},\cite{luongo2021inviscid}. According to the discussion in \autoref{subsec: inviscid limit intro} this is the critical case to analyze. Regarding the Large Deviation Principle for the inviscid limit of the Navier-Stokes equations, one technical issue that need to be addressed is the interplay between Kato-type conditions, i.e. some controls on the dissipation of the energy in the solutions of the stochastic Navier-Stokes equations within the boundary layer, and  large fluctuations away from the zero-noise and zero-viscosity limit.\\
Large Deviation for fluid dynamic models in 2D have been established in the case of Navier-Stokes with additive noise, see \cite{CHANGldpnseadd}, and multiplicative noise, see \cite{ldpnsemult}. While the first result is based on a technique developed by Freidlin and Wentzell, based on a discretization of the equation and the application of the so-called contraction principle, the second one resorts to the the Weak Convergence Approach developed in \cite{10.1214/07-AOP362}. While the Freidlin-Wentzell technique is best suited for equations with additive noise, the Weak Convergence Approach has proved to be much more flexible in many other situations. 
As an example, in \cite{:GaleatiLuo}, the authors proved a LDP for the convergence of the Euler equation with transport noise on the 2D torus to a deterministic Navier-Stokes system using the weak convergence approach. We adopt this approach as well, even if our equations have additive noise, as the vanishing of the viscosity together with the noise constitutes a technical issue that cannot be addressed via a classical contraction argument.\\ 
Finally, the validity of a Large Deviation Principle for the inviscid limit of the Navier-Stokes equations with periodic or free boundary conditions has been shown in \cite{bessaih2012large} using the weak convergence approach.
Similarly to other results with these kind of boundary conditions, the result of \cite{bessaih2012large} is based on the validity of the enstrophy equality, which allows to obtain stable estimates in the limit $\epsilon\rightarrow 0$ stronger than the one guaranteed by the energy equality. These relations are not available in the case of no-slip boundary conditions, due to the generation of vorticity close the boundary. Therefore, the introduction of some Kato-type hypothesis, see \cite{kato1984remarks}, is required in order to show the validity of the Large Deviation Principle, similarly to the validity of the inviscid limit.  On the contrary, as described in \autoref{subsec: inviscid limit intro}, there are fluid dynamic frameworks where the inviscid limit holds in the bounded domain without any assumption on the behavior of the solution in the boundary layer. This is the case of the second grade fluid equations with the scaling of the parameters introduced in \cite{lopes2015approximation} and the case of the Navier-Stokes equations in the open ball with forcing and initial conditions with radial symmetry. 
\subsection{Plan of the Paper}\label{subsec: plan of the paper}
The goal of this paper is to show the validity of the large deviations principle for the inviscid limit of relevant fluid dynamical systems in 2D bounded domain via the weak convergence approach introduced in \cite{10.1214/07-AOP362}. In \autoref{sec:Preliminaries} we will introduce some facts used repeatedly among the paper. In particular, in \autoref{subsec:LDP abstract}, we will recall several information about Large Deviation Principle, presenting its classical formulation and some equivalent formulations. In \autoref{subsec:well-known} we will present some well-known facts about the systems presented in this introduction that will play a role in the rest of the paper. In \autoref{subsec:main result} we will state our main theorems. The analysis of the validity of a Large Deviation Principle for the inviscid limit of the Navier-Stokes equations is the object of \autoref{sec: NS}. In particular we will prove \autoref{main thm LDP NS} in \autoref{sec:Kato assumption}, namely the validity of the Large Deviation Principle assuming a Kato condition. In \autoref{sec:radial symmetry} we will show that the Large Deviation Principle holds assuming initial conditions and forcing terms with radial symmetry. In \autoref{sec:second Grade} we will prove the validity of the Large Deviation Principle for the Second-Grade fluid equations under the scaling of the parameter introduced in \cite{lopes2015approximation}. Lastly we add some comments on the Kato condition assumed in this paper in \autoref{sec:Remark Kato}.
\section{Preliminaries and Main Results}\label{sec:Preliminaries}
\subsection{Large Deviations Principle}\label{subsec:LDP abstract}

We recall here the abstract framework of the weak convergence approach to Large Deviations developed in \cite{10.1214/07-AOP362}. We begin with an usual filtered probability space $(\Omega, \mathcal{F}, \mathcal{F}_t, \P),\ t\in[0,T]$. Let $\mathcal{H}$ be an Hilbert space and $\mathcal{Q}$ a trace-class operator on $\mathcal{H}$. We can endow the space $\mathcal{H}_0:=\mathcal{Q}^{1/2}\mathcal{H}$ with the metric induced by $\mathcal{Q}$, that is
$$\langle g, h\rangle_0= \langle \mathcal{Q}^{-1/2}g, \mathcal{Q}^{-1/2}h\rangle$$
which makes $\mathcal{H}_0$ a Hilbert space. The norm induced by this inner product will be denoted $\lVert \cdot\rVert_\0$. When $\mathcal{Q}$ is the covariance operator of a Wiener process $\{W_t\}_{t\in[0, T]}$ on $\mathcal{H}$, we call $\0$ the reproducing kernel Hilbert space of $W_t$, or simply RKHS. We also define the space \begin{align*}
  S^N:=S^N(\mathcal{H}_0):=\Big\{v\in L^2(0,T;\mathcal{H}_0):\quad \int_0^T \lVert v_s\rVert_\0^2 ds \leq N\Big\},  
\end{align*}
which makes a Polish space when endowed with the weak topology. 
We denote by $\mathcal{P}_2:=\mathcal{P}_2(\0)$ the space of $\0$-valued, $\mathcal{F}_t$-predictable and $\P$-a.s. square integrable processes. Next we define 
$$\mathcal{P}_2^N:=\{\phi\in \mathcal{P}_2: \ \phi(\omega)\in S^N\quad \mathbb{P}-a.s.\}$$
Let $\E$ and $\E_0$ be Polish spaces.
\begin{definition}
A function $I : \E \rightarrow [0,\infty]$ is called a rate function if for any $M <\infty$, the level set $\{f \in \E : I(f) \le M \}$ is a compact subset of $\E$. A family of rate functions $I_x$ on $\E$, parametrized by $x \in \E_0$, is said to have compact level sets on compacts if for all compact subsets $K$ of $\E_0$ and each $M < \infty$, $\cup_{x\in K}\{f \in \E : I_x(f)\le M \}$ is a compact subset of $\E$.
\end{definition}
Let us give now the definition of LDP in the original formulation by Varadhan (see \cite{varLDP})
\begin{definition}
We say that a Large Deviation Principle holds for a family $\mu^{\eps}$ of probability measures on a metric space $(\E, d)$ with rate function $I$ and speed $\eps$ if for every borel set $\Gamma$ of $\E$
\begin{equation}
    I(\ring \Gamma) \le \liminf_{\eps\rightarrow 0}\eps\log(\mu^{\eps}(\Gamma)) \le \limsup_{\eps\rightarrow 0}\eps\log(\mu^{\eps}(\Gamma))\le I(\bar\Gamma)
\end{equation}
where $I(A):=-\inf_{A} I$.
\end{definition}
This condition has been proved equivalent by Bryc in \cite{bryc} to the so called Laplace principle. Here, we state a uniform version of this principle, that is, we let $I$ and $\mu^\eps$ depend also on some parameter $x\in \E_0$.
\begin{definition}\textbf{(Uniform Laplace Principle)}
    Let $I_x$ be a family of rate functions on $E$ parameterized by $x\in \E_0$ and assume that this family has compact level sets on compacts. The family of random variables $\{X^{x, \eps}\}$ distributed according to $\mu^{x, \eps}$ are said to satisfy the Laplace principle on $\E$ with rate function $I_x$, uniformly on compacts, if for all compact subsets $K\subset \E_0$ and all bounded continuous functions $h$ mapping $\E$ into $\R$,
    \begin{equation}
        \lim_{\eps\rightarrow 0}\sup_{x\in K}\Big| \eps \log\EE_x\Big[\exp\big(-\eps^{-1}h(X^{x, \eps})\big) \Big] + \inf_{f\in \E}\{h(f) - I_x(f)\} \Big| =0
    \end{equation}
\end{definition}
We are interested in the case when the family of measures $\mu^\eps$ is given by the laws of some stochastic process $X^{x,\eps}$ solving some SPDE and driven by $\eps^{1/2} W_t$. In this case, we can often represent $X^{x, \eps}= \mathcal{G}^\eps(x, \eps^{1/2} W)$ for some measurable map $\mathcal{G}^\eps:\E_0 \times C([0,T];\mathcal{H}) \rightarrow \E$. In this setting, in \cite{10.1214/07-AOP362} the authors provided a handy criterium that allows to deduce the uniform Laplace principle. This is known as the \textit{weak convergence approach} to Large deviations. The criterium goes as follows.
\begin{hypothesis}\label{conditions weak convergence approach}
There exists a measurable map $\mathcal{G}^0 : \E_0 \times C([0,T];\mathcal{H})  \rightarrow \E$ such that:
\begin{enumerate}
\item For any $N<\infty$ and compact set $K\subset \E_0$ , $\Gamma _{K,N} :=\{ \mathcal{G}^0(x,\int_0^\cdot v_s ds):v\in S^N, \ x\in K \}$ is a compact subset of $\E$.
\item Consider $N<\infty$ and families $\{x^\eps\}\subset \E_0, \ \{u^\eps\}\subset \mathcal{P}_2^N$ such that, as $\eps\rightarrow 0$, $x^\eps\rightarrow x$ and $u^\eps$ converge in law to $u$ as $S^N$-valued random element, then  $\mathcal{G^\eps}\big(x^\eps, \eps^{1/2} W + \int_0^{\cdot} u^\eps_s ds\big)$ converges in law to $\mathcal{G}^0(x,\int_0^\cdot u_s ds)$ in the topology of $\E$.
\end{enumerate}
\end{hypothesis}
\begin{theorem}\label{weak convergence approach abstract thm}
  Let $X^{\eps,x}=\mathcal{G}^{\eps}(x,\eps^{1/2}W)$ and suppose \autoref{conditions weak convergence approach} holds. Define, for $x \in \E_0$ and $f \in \E$  
  \begin{align*}
     I_x(f) := \inf_{\{v\in L^2_t\0: \ f=\mathcal{G}^0(x,\int_0^\cdot v_s ds)\}} \int_0^T \|v_s\|^2_\0 ds 
  \end{align*}
with the convention that $\inf \emptyset = +\infty$. Assume that for all $f \in \E$, $x \rightarrow I_x(f)$ is a lower semicontinuous map from $\E_0$ to $[0,\infty]$. Then for all $x \in \E_0$, $f \rightarrow I_x(f)$ is a rate function on $E$ and the family ${I_x}x\in \E_0$ of rate functions has compact level sets on compacts. Furthermore, the family $\{X^{\eps,x}\}$ satisfies the Laplace principle on $\E$, with the rate functions $\{I_x\}$, uniformly on compact subsets of $\E_0$.
\end{theorem}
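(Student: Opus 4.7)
The plan is to derive everything from the Boué--Dupuis variational representation of exponential moments of Brownian functionals, which in the speed-$\eps$ scaling reads
\begin{align*}
-\eps \log \EE\!\left[e^{-\eps^{-1} h(\mathcal{G}^\eps(x,\eps^{1/2}W))}\right] = \inf_{u\in \mathcal{P}_2} \EE\!\left[\tfrac{1}{2}\int_0^T \|u_s\|_\0^2\,ds + h\!\left(\mathcal{G}^\eps\bigl(x, \eps^{1/2}W + {\textstyle\int_0^\cdot} u_s\,ds\bigr)\right)\right].
\end{align*}
Because $h$ is bounded, comparison with the control $u\equiv 0$ shows that for every $\eps$ the infimum is essentially attained on a fixed ball $\mathcal{P}_2^N$ with $N = N(\|h\|_\infty)$: any control whose $S$-norm exceeds $N$ gives a value strictly larger than the trivial one. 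Consequently, minimizing controls live in the weakly compact Polish set $S^N$, which makes the two parts of \autoref{conditions weak convergence approach} directly applicable: (1) supplies the compactness needed for the rate function, while (2) lets us pass to the limit inside the variational formula.

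First I would verify that $I_x$ is a rate function. For $x\in \E_0$ and $M<\infty$, the sublevel set $\{f : I_x(f) \le M\}$ is contained in $\Gamma_{\{x\},M+1}$, which is compact by part (1). Closedness follows by taking a minimizing sequence $v_n \in S^{M+1}$ with $\mathcal{G}^0(x,\int v_n\,ds)=f_n \to f$, extracting a weak limit $v \in S^{M+1}$, and using (2) along a diagonal sequence with $x^\eps\equiv x$, $u^\eps = v_n$ to identify $\mathcal{G}^0(x,\int v\,ds) = f$. The compact-level-sets-on-compacts statement is obtained identically, replacing $\{x\}$ by $K$ and invoking (1) for $\Gamma_{K,M+1}$.

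For the Laplace upper bound, fix $\delta>0$; for each $x\in K$ pick $f_x^\star$ with $h(f_x^\star) + I_x(f_x^\star) \le \inf_{f}\{h(f) + I_x(f)\} + \delta$ and a deterministic $v^x\in L^2(0,T;\0)$ with $\tfrac12\int_0^T\|v^x_s\|_\0^2\,ds \le I_x(f_x^\star) + \delta$ and $\mathcal{G}^0(x,\int_0^\cdot v^x_s\,ds) = f_x^\star$. Plugging the deterministic control $v^x$ into the variational representation and applying (2) with $u^\eps \equiv v^x$, $x^\eps \equiv x$, together with continuity and boundedness of $h$, yields
\begin{align*}
\limsup_{\eps\to 0}\bigl(-\eps \log \EE[e^{-\eps^{-1} h(X^{x,\eps})}]\bigr) \le h(f_x^\star) + \tfrac{1}{2}{\textstyle\int_0^T}\|v^x_s\|_\0^2\,ds \le \inf_{f\in \E}\{h(f) + I_x(f)\} + 2\delta.
\end{align*}
Uniformity over $K$ is obtained by a covering argument that exploits the hypothesized lower semicontinuity of $x \mapsto I_x(f)$.

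For the matching lower bound, for each $\eps$ and $x\in K$ take an $o(1)$-optimizer $u^{\eps,x}\in \mathcal{P}_2^N$. Along any sequence $\eps_k\to 0$ and $x_k\to x\in K$, the family $\{u^{\eps_k,x_k}\}$ is tight in the weakly compact Polish space $S^N$; Skorokhod's theorem provides a further subsequence along which $u^{\eps_k,x_k}\to u$ weakly in $S^N$ almost surely. Hypothesis (2) then yields
\begin{align*}
\mathcal{G}^{\eps_k}\bigl(x_k, \eps_k^{1/2}W + {\textstyle\int_0^\cdot} u^{\eps_k, x_k}_s\,ds\bigr) \longrightarrow \mathcal{G}^0\bigl(x,{\textstyle\int_0^\cdot} u_s\,ds\bigr) \quad \text{in law,}
\end{align*}
and Fatou's lemma, the weak lower semicontinuity of $v\mapsto \tfrac{1}{2}\int_0^T\|v_s\|_\0^2\,ds$ on $S^N$, and the continuity of $h$ combine to give $\liminf \ge \inf_f\{h(f)+I_x(f)\}$, uniformly in $x\in K$. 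The main technical obstacle is to ensure that weak convergence of the controls is compatible with strong identification of the limiting output, uniformly in the initial datum $x$; this is precisely what part (2) of \autoref{conditions weak convergence approach} is engineered to provide, so once the a priori bound reduces the problem to $\mathcal{P}_2^N$, the rest of the argument is structural.
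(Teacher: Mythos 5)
The paper does not prove this theorem: it is imported verbatim from Budhiraja--Dupuis--Maroulas \cite{10.1214/07-AOP362}, so there is no internal proof to compare against. Your sketch is a faithful reconstruction of the standard argument from that reference -- Bou\'e--Dupuis variational representation, a priori reduction of near-optimal controls to $\mathcal{P}_2^N$, Condition (1) for compactness of level sets, Condition (2) for passing to the limit in the variational formula in both directions -- and the overall structure is correct. Two steps are compressed in a way worth flagging. First, the bound obtained by comparing with $u\equiv 0$ controls only $\EE\bigl[\tfrac12\int_0^T\|u_s\|_\0^2\,ds\bigr]$, i.e.\ the cost in expectation; membership in $\mathcal{P}_2^N$ requires a $\PP$-a.s.\ pathwise bound, and the standard fix is to stop the control when its accumulated cost reaches $N$ and show via Chebyshev that the resulting error in the variational formula is $O(\|h\|_\infty/N)$. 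Second, for closedness of the level sets you need to know that $v_n\rightharpoonup v$ in $S^N$ and $\mathcal{G}^0(x,\int_0^\cdot v_n)\to f$ force $f=\mathcal{G}^0(x,\int_0^\cdot v)$; compactness of $\Gamma_{K,N}$ alone does not give this, and the diagonal-in-$\eps$ argument you allude to (approximating each $\mathcal{G}^0(x,\int v_n)$ in law by $\mathcal{G}^{\eps_n}(x,\eps_n^{1/2}W+\int v_n)$ and then invoking Condition (2) along the diagonal, using that the limits are deterministic) is indeed the correct way to close it, so make it explicit. Finally, note that your proof establishes the rate function with the factor $\tfrac12$, consistent with the variational representation and with the paper's Theorems on the concrete systems; the absence of the $\tfrac12$ in the abstract statement above is a typo in the paper, not an error on your side.
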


\subsection{Well-Known Facts on fluid dynamic models}\label{subsec:well-known}
Let us start this section introducing some general assumptions which will be always adopted under our analysis even if not recalled.
\begin{hypothesis}\label{General Hypothesis}
\begin{itemize}
    \item $0<T<+\infty$.
    \item $D$ is a bounded, smooth, simply connected domain.
    \item $\left(\Omega,\mathcal{F},\mathcal{F}_t,\mathbb{P}\right)$ is a filtered probability space such that $(\Omega, \mathcal{F},\mathbb{P})$ is a complete probability space, $(\mathcal{F}_t)_{t\in [0,T]}$ is a right continuous filtration and $\mathcal{F}_0$ contains every $\mathbb{P}$ null subset of $\Omega$.
\end{itemize}
\end{hypothesis}
For square integrable semimartingales taking value in separable Hilbert spaces $U_1,\ U_2$ we will denote by
$[M, N]_t$ the quadratic covariation process. If $M, N$ take values in the same separable Hilbert space $U$ with orthonormal basis $u_i$, we will denote by $\langle\langle M,N\rangle\rangle_t=\sum_{i\in \mathbb N} [\langle M,u_i\rangle_U, \langle N,u_i\rangle_U]_t$.
For each $k\in \mathbb{N},\ 1\leq p\leq \infty$ we will denote by $L^p(D)$ and $W^{k,p}(D)$ the well-known Lebesgue and Sobolev spaces. We will denote by $C_{c}^{\infty}(D)$ the space of smooth functions with compact support and by $W^{k,p}_0(D)$ their closure with respect to the $W^{k,p}(D)$ topology.  If $p=2$, we will write $H^k(D)$ (resp. $H^k_0(D)$) instead of $W^{k,2}(D)$ (resp. $W^{k,2}_0(D)$). Let $X$ be a separable Hilbert space, denote by $L^p(\mathcal{F}_{t_0},X)$ the space of $p$ integrable random variables with values in $X$, measurable with respect to $\mathcal{F}_{t_0}$. We will denote by $L^p(0,T;X)$ the space of measurable functions from $[0,T]$ to $X$ such that 
\begin{align*}
    \lVert u\rVert_{L^p(0,T;X)}:=\left(\int_0^T \lVert u_t \rVert_X^p\ dt\right)^{1/p}<+\infty,\ \ 1\leq p<\infty
\end{align*}
and obvious generalization for $p=\infty.$ For any $r,\ p\geq 1$, we will denote by $L^p(\Omega,\mathcal{F},\mathbb{P};L^r(0,T;X))$ the space of processes with values in $X$ such that \begin{enumerate}
    \item $u(\cdot,t)$ is progressively measurable.
    \item $u(\omega,t)\in X$ for almost all $(\omega,t)$ and \begin{align*}
        \mathbb{E}\left[\lVert u(\omega,\cdot)\rVert_{L^r(0,T;X)}^p\right]<+\infty.
    \end{align*}
    Obvious generalizations for $p=\infty$ or $r=\infty$. 
\end{enumerate}
Lastly we will denote by $C_{\mathcal{F}}([0,T];H)$ the space of continuous adapted processes with values in $X$ such that 
\begin{align*}
    \mathbb{E}\left[\operatorname{sup}_{t\in  [0,T]}\lVert u_t\rVert_X^2\right]<+\infty.
\end{align*}

Set
\begin{align*}
  H=\{f \in L^2(D;\mathbb{R}^2),\ \div f=0,\ f\cdot n|_{\partial D}=0\},\ V=H_0^1(D;\mathbb{R}^2)\cap H,\ D(A)=H^2(D;\mathbb{R}^2)\cap V.  
\end{align*}
Moreover we introduce the vector space
\begin{align*}
  W=\{u\in V:\ \operatorname{curl}(u-\eps\Delta u)\in L^2(D;\mathbb{R}^2) \}  
\end{align*}
 with norm $\lVert{u}\rVert_W^2=\lVert u\rVert^2+\eps\lVert \nabla u\rVert_{L^2(D;\mathbb{R}^2)}^2+\lVert \operatorname{curl}(u-\Delta u)\rVert_{L^2(D)}^2.$ It is well-known, see for example \cite{cioranescu1984existence}, that we can identify $W$ with the space 
 \begin{align*}
  \hat{W}=\{u\in H^3(D;\mathbb{R}^2)\cap V \}.   
 \end{align*}
Moreover there exists a constant such that \begin{align}\label{equivalence H3-W}
    \lVert u\rVert_{H^3}^2\leq C\left(\lVert u\rVert^2+\lVert \nabla u\rVert_{L^2(D;\mathbb{R}^2)}^2+\lVert \operatorname{curl}(u-\Delta u)\rVert^2_{L^2(D)} \right).
\end{align} We denote by $\langle\cdot,\cdot\rangle$ and $\lVert\cdot\rVert$ the inner product and the norm in $H$ respectively. With some abuse of notation we will denote also the inner product and the norm in $L^2$ with the same symbols. Other norms and scalar products will be denoted with the proper subscript. On $V$ we introduce the norm $\lVert u\rVert_V^2=\lVert u\rVert^2+\eps\lVert \nabla u\rVert_{L^2(D;\mathbb{R}^2)}^2.$ We will shortly denote by $\lVert u\rVert_*=\lVert \operatorname{curl}(u-\eps\Delta u)\rVert_{L^2(D)}.$ Obviously the following inequality holds for $u\in V$, where $C_p$ is the Poincarè constant associated to $D$,
\begin{align}\label{equivalence H1-V}
    \frac{\lVert u\rVert_V^2}{\eps+C_p^2}\leq \lVert \nabla u\rVert_{L^2(D;\mathbb{R}^2)}^2\leq \frac{\lVert u\rVert_V^2}{\eps}
\end{align}
Denote by $P$ the linear projector of $L^2\left(D;\mathbb{R}^2\right)$ on $H$ and define the unbounded linear operator $A:D(A)\subseteq H\rightarrow H$ by the identity \begin{align}\label{definition of A}
    \langle A v, w\rangle=\langle \Delta v, w \rangle_{L^2(D;\mathbb{R}^2)}
\end{align}
for all $v \in D(A),\ w \in H$. $A$ will be called the Stokes operator. It is well-known (see for example \cite{temam2001navier}) that $A$ is self-adjoint, generates an analytic semigroup of negative type on $H$ and moreover $V\sim D\left(\left(-A)^{1/2}\right)\right).$ We will denote by $e^{\eps At}$ the strongly continuous semigroup on $H$ generated by $\eps A$.
Denote by $\mathbb{L}^{4}$ the space $L^{4}\left(  D,\mathbb{R}^{2}\right)
\cap H$, with the usual topology of $L^{4}\left(  D,\mathbb{R}^{2}\right)  $.
Define the trilinear, continuous form $b:\mathbb{L}^{4}\times V\times\mathbb{L}%
^{4}\rightarrow\mathbb{R}$ as%
\begin{align}\label{definition of b}
b\left(  u,v,w\right)  =\langle u, P(\nabla v w)\rangle.
\end{align}
We will use also some properties of the projection operator $P$ and the solution map of the Stokes operator. We refer to \cite{temam2001navier} for the proof of the lemmas below.
\begin{lemma}\label{lemma projection}
The restriction of the projection operator $P:L^2(D;\mathbb{R}^2)\rightarrow H$ to $H^r(D;\mathbb{R}^2)$ is a continuous and linear map between $H^r(D;\mathbb{R}^2)$ and itself.
\end{lemma}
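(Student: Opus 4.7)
The plan is to invoke the Helmholtz (Leray) decomposition: every $u\in L^2(D;\R^2)$ can be written uniquely as $u = Pu + \nabla q_u$, where $Pu\in H$ and $q_u\in H^1(D)/\R$ solves the Neumann problem
\begin{equation*}
\Delta q_u = \div u \text{ in } D,\qquad \partial_n q_u = u\cdot n \text{ on } \partial D,
\end{equation*}
in the appropriate weak sense. The continuity and linearity of $P$ on $L^2$ is standard; the issue is to lift the $H^r$-regularity from $u$ to $Pu$, and this will be done by lifting the regularity of the potential $q_u$ via elliptic regularity for the Neumann problem.

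First, I would fix $u\in H^r(D;\R^2)$ with $r\ge 1$. Then $\div u\in H^{r-1}(D)$ by straightforward differentiation, and by the trace theorem $u\cdot n|_{\partial D}\in H^{r-1/2}(\partial D)$. Moreover, the compatibility condition $\int_D \div u\, dx = \int_{\partial D} u\cdot n\,d\sigma$ holds by the divergence theorem, ensuring that the Neumann problem above admits a (unique up to additive constants) weak solution $q_u\in H^1(D)$.

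Next, I would invoke the classical elliptic regularity for the Neumann problem on the smooth bounded domain $D$ (Hypothesis \ref{General Hypothesis} guarantees smoothness of $\partial D$): the solution satisfies $q_u\in H^{r+1}(D)$ together with the estimate
\begin{equation*}
\lVert q_u\rVert_{H^{r+1}(D)/\R} \le C\bigl(\lVert \div u\rVert_{H^{r-1}(D)} + \lVert u\cdot n\rVert_{H^{r-1/2}(\partial D)}\bigr) \le C\lVert u\rVert_{H^r(D;\R^2)}.
\end{equation*}
Consequently, $\nabla q_u\in H^r(D;\R^2)$ and $Pu = u-\nabla q_u\in H^r(D;\R^2)$, with the continuity bound $\lVert Pu\rVert_{H^r}\le (1+C)\lVert u\rVert_{H^r}$. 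Linearity of $P$ is inherited from the linearity of the map $u\mapsto q_u$. The case $r=0$ is just the standard $L^2$-boundedness of the Leray projector.

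The main technical point is the elliptic regularity up to the boundary for the Neumann problem with inhomogeneous boundary data, which is where the smoothness assumption on $\partial D$ is used crucially; for non-integer $r$ this extends by interpolation between consecutive integer cases, or equivalently by direct work in Besov/Bessel scales. Since $D$ is assumed smooth, no further obstruction arises, and the result follows as stated.
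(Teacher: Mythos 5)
Your argument is correct and is essentially the standard proof: the paper does not prove this lemma itself but refers to \cite{temam2001navier}, where the $H^r$-continuity of the Leray projector is obtained exactly as you do, via the Helmholtz decomposition $u = Pu + \nabla q_u$ and elliptic regularity for the associated Neumann problem on a smooth domain (with interpolation covering the intermediate exponents). No gaps.
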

\begin{lemma}\label{eigenfunction Stokes}
The injection of $V$ in $H$ is compact. Thus there exists a sequence ${e}_i$ of elements of $H$ which forms an orthonormal basis in $H$ and an orthogonal basis in $V$. This sequence verifies \begin{align*}
    -A{e}_i={\lambda}_i{e}_i
\end{align*}
where ${\lambda}_{i+1}>{\lambda}_{i}>0,\ i=1,2,\dots$. Moreover ${\lambda}_i\rightarrow +\infty$. Lastly ${e}_i\in C^{\infty}(\overline{D};\mathbb{R}^2)$  under our assumptions on $D$ 
\end{lemma}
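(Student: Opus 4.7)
The plan is to apply the spectral theorem for compact self-adjoint operators to the inverse of the Stokes operator $A$, complemented by an elliptic regularity bootstrap for the smoothness of the eigenfunctions. The whole argument is classical, essentially due to Temam, but I sketch the three main steps.

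First, the compactness of $V\hookrightarrow H$ follows from the Rellich-Kondrachov theorem: since $D$ is bounded with smooth boundary, $H^1_0(D;\R^2)\hookrightarrow L^2(D;\R^2)$ is compact. As $V$ is a closed subspace of $H^1_0(D;\R^2)$ (the divergence-free constraint being closed under $H^1$-limits) and $H$ is a closed subspace of $L^2(D;\R^2)$, the restricted embedding $V\hookrightarrow H$ inherits compactness.

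Second, I would construct the eigenbasis. From the self-adjointness and strict negativity of $A$ on $D(A)$ (recalled just above the statement), Lax-Milgram applied to the continuous coercive bilinear form $a(u,v)=\langle\nabla u,\nabla v\rangle_{L^2}$ on $V\times V$ produces a bounded self-adjoint inverse $K=(-A)^{-1}:H\to V\hookrightarrow H$. Composing with the compact embedding from the previous step, $K$ is a compact, self-adjoint, positive operator on $H$. The spectral theorem for such operators yields a sequence of positive eigenvalues $\mu_i\searrow 0$ and an orthonormal basis $\{e_i\}$ of $H$ consisting of eigenvectors of $K$; setting $\lambda_i=1/\mu_i$ gives $-Ae_i=\lambda_i e_i$ with $\lambda_i\to+\infty$ (the strict ordering in the statement being understood modulo repetition for multiplicity). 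The $V$-orthogonality then follows from
\begin{align*}
\langle e_i,e_j\rangle_V=\langle e_i,e_j\rangle+\eps\langle\nabla e_i,\nabla e_j\rangle=\langle e_i,e_j\rangle+\eps\langle -Ae_i,e_j\rangle=(1+\eps\lambda_i)\delta_{ij}.
\end{align*}

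Third, for the smoothness conclusion, I would view each $e_i$ as a weak solution of the stationary Stokes system $-\Delta e_i+\nabla p_i=\lambda_i e_i$, $\div e_i=0$ in $D$, $e_i|_{\partial D}=0$, for a suitable pressure $p_i$. Since $e_i\in H\subset L^2(D;\R^2)$, the Cattabriga-Solonnikov regularity theory on smooth bounded domains yields $e_i\in H^2(D;\R^2)$ with $p_i\in H^1(D)/\R$. Bootstrapping on the right-hand side $\lambda_i e_i$ gives $e_i\in H^k(D;\R^2)$ for every $k\in\N$, hence $e_i\in C^\infty(\overline D;\R^2)$ by Sobolev embedding. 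The main (and essentially only) technical obstacle is this regularity step: one must invoke the full Cattabriga-Solonnikov estimates, not just the $H^1$ weak formulation, and carefully track the pressure along the bootstrap. The remaining parts are a routine application of the spectral theorem.
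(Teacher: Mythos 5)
Your proposal is correct and follows exactly the classical route (Rellich--Kondrachov compactness, spectral theorem for the compact self-adjoint inverse of the Stokes operator, and Cattabriga--Solonnikov regularity bootstrap for smoothness) that the paper itself does not reprove but delegates to \cite{temam2001navier}. The only caveat, which you already flag, is that the strict inequality $\lambda_{i+1}>\lambda_i$ in the statement should be read modulo eigenvalue multiplicity.
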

The tools introduced above are the standard ingredients in order to deal with the Navier-Stokes equations. We need to recall some other facts in order to treat Second-Grade fluid equations.  We refer to \cite{cioranescu1984existence},\cite{razafimandimby2010weak}, \cite{razafimandimby2012strong}, \cite{galdi2011introduction} for the proof of the various statements.
\begin{lemma}\label{nonlinearity}
For any smooth, divergence free $\phi,\ v,\ w$ the following relation holds \begin{align}\label{equvalence hatB and b}
    \langle \operatorname{curl}\phi \times v, w\rangle_{L^2}=b(v,\phi,w)-b(w,\phi,v).
\end{align}
Moreover for $u,\ v,\ w$ the following inequalities hold \begin{align}
    \lvert \langle \operatorname{curl}(u-\eps\Delta u) \times v, w\rangle_{L^2}\rvert\leq C \lVert u\rVert_{H^3}\lVert v\rVert_V \lVert w\rVert_W \label{inequality trilinear 1}\\
    \lvert \langle \operatorname{curl}(u-\eps\Delta u) \times u, w\rangle_{L^2}\rvert\leq C \lVert u\rVert^2_V \lVert w\rVert_W\label{inequality trilinear 2}
\end{align}
Therefore there exists a bilinear operator $\hat{B}:W\times V\rightarrow W^*$ such that \begin{align}\label{definition hatB}
    \langle \hat{B}(u,v),w\rangle_{W^*,W}=\langle P(\operatorname{curl}(u-\eps\Delta u)\times v),w \rangle
\end{align}
which satisfies for $u\in V,\ v\in W$
\begin{align}
    \lVert \hat{B}(v,u)\rVert_{W^*}\leq C \lVert u\rVert_V\lVert v\rVert_W \label{inequality bilinear 1}\\ 
    \lVert \hat{B}(u,u)\rVert_{W^*}\leq C \lVert u\rVert_V^2\label{inequality bilinear 2}.
\end{align}
Lastly, for $u\in W,\ v\in V, \ w \in W$
\begin{align}\label{antisimmetry hatB}
    \langle \hat{B}(u,v),w\rangle_{W^*,W}=-\langle \hat{B}(u,w),v\rangle_{W^*,W}.
\end{align}
\end{lemma}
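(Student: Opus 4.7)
Plan: I will first prove the pointwise identity \eqref{equvalence hatB and b} via direct vector calculus, then use it together with integration by parts to derive the two trilinear estimates \eqref{inequality trilinear 1}--\eqref{inequality trilinear 2}; the operator $\hat{B}$ and its antisymmetry then follow by continuity and a triple-product identity.

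For \eqref{equvalence hatB and b}, I view 2D vectors as 3D ones with vanishing third component so that $\operatorname{curl}\phi$ corresponds to $(0,0,\partial_1\phi_2-\partial_2\phi_1)$. A direct index computation yields the pointwise formula
\begin{align*}
((\nabla\times\phi)\times v)_i = v_j\partial_j\phi_i - v_k\partial_i\phi_k,
\end{align*}
which, integrated against $w$ and matched against \eqref{definition of b}, gives $\langle(\nabla\times\phi)\times v,w\rangle_{L^2} = b(v,\phi,w) - b(w,\phi,v)$; the projector $P$ may be dropped since the pairing is against the divergence-free fields $v$ or $w$.

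For \eqref{inequality trilinear 1}, since $u\in H^3$ one has $\operatorname{curl}(u-\eps\Delta u)\in L^2$, so H\"older directly gives
\begin{align*}
\lvert\langle\operatorname{curl}(u-\eps\Delta u)\times v,w\rangle_{L^2}\rvert\leq \lVert\operatorname{curl}(u-\eps\Delta u)\rVert_{L^2}\lVert v\rVert_{L^4}\lVert w\rVert_{L^4}\leq C\lVert u\rVert_{H^3}\lVert v\rVert_V\lVert w\rVert_W,
\end{align*}
using the 2D Sobolev embedding $H^1\hookrightarrow L^4$, \eqref{equivalence H1-V}, and \eqref{equivalence H3-W} (with constants depending on $\eps$). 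The subtler estimate \eqref{inequality trilinear 2} requires careful integration by parts because only the $V$-norm of $u$ is allowed on the right-hand side. Applying \eqref{equvalence hatB and b} with $\phi=u-\eps\Delta u$ and expanding, I observe that the $b(w,u,u)$-term vanishes as a total gradient (by $\div w=0$), leaving
\begin{align*}
\langle\operatorname{curl}(u-\eps\Delta u)\times u,w\rangle_{L^2} = b(u,u,w) - \eps\, b(u,\Delta u,w) + \eps\, b(w,\Delta u,u).
\end{align*}
The first term is bounded by $\lVert u\rVert_{L^4}^2\lVert\nabla w\rVert_{L^2}\leq C\lVert u\rVert_V^2\lVert w\rVert_W$; the two $\Delta u$-terms are handled by integrating by parts twice to transfer both derivatives of $\Delta u$ onto $w$, using $\div u=\div w=0$ together with the 2D embedding $W\hookrightarrow H^3\hookrightarrow W^{1,\infty}$, after which the remaining contributions are controlled by combinations of $\eps\lVert\nabla u\rVert_{L^2}^2\lVert\nabla w\rVert_{L^\infty}$ and $\eps\lVert u\rVert_{L^4}\lVert\nabla u\rVert_{L^2}\lVert\nabla^2 w\rVert_{L^4}$. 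Absorbing the factors $\eps\lVert\nabla u\rVert_{L^2}^2$ into $\lVert u\rVert_V^2$ and dominating the $w$-factors by $\lVert w\rVert_W$ via \eqref{equivalence H3-W} then gives \eqref{inequality trilinear 2}. I expect this to be the main technical obstacle, because of the care required in choosing admissible integrations by parts (boundary terms vanish since $u,w\in V$) and in making every residual factor absorbable into the allowed norms.

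Finally, I define $\hat{B}(u,v)$ through \eqref{definition hatB} on smooth divergence-free pairs; \eqref{inequality trilinear 1}--\eqref{inequality trilinear 2} then yield \eqref{inequality bilinear 1}--\eqref{inequality bilinear 2} by duality, and a density argument extends $\hat{B}$ to an operator $W\times V\to W^*$. The antisymmetry \eqref{antisimmetry hatB} follows from the pointwise scalar triple-product identity $(a\times b)\cdot c=-(a\times c)\cdot b$ applied to $a=\operatorname{curl}(u-\eps\Delta u)$, which holds on smooth divergence-free test elements and extends by continuity in $v$ and $w$.
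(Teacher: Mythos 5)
The paper does not prove this lemma at all: it is stated as a collection of well-known facts and the reader is referred to \cite{cioranescu1984existence}, \cite{razafimandimby2010weak}, \cite{razafimandimby2012strong}, \cite{galdi2011introduction}. Your argument is a correct, self-contained reconstruction of the standard proofs in those references: the pointwise identity $((\nabla\times\phi)\times v)_i=v_j\partial_j\phi_i-v_j\partial_i\phi_j$ gives \eqref{equvalence hatB and b}, the cancellation of $b(w,u,u)$ and the double integration by parts on the $\eps\,\Delta u$ terms (with the terms $\partial_k(\div u)$, $\partial_k(\div w)$ and $\int w\cdot\nabla|\nabla u|^2$ dropping out) leave exactly the residuals $\eps\|\nabla u\|^2\|\nabla w\|_{L^\infty}$ and $\eps\|u\|_{L^4}\|\nabla u\|\|\nabla^2 w\|_{L^4}$ that you list, and the duality/density and triple-product steps for $\hat B$ and \eqref{antisimmetry hatB} are routine.

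One caveat is worth fixing. The paper's notation section declares that $C$ always denotes a constant independent of $\eps$ and $\nu$, whereas two of your H\"older choices produce constants blowing up as $\eps\to0$: in \eqref{inequality trilinear 1} you pay $\|v\|_{L^4}\lesssim\|\nabla v\|\le\eps^{-1/2}\|v\|_V$ with nothing to compensate, and in \eqref{inequality trilinear 2} the bound $|b(u,u,w)|\le\|u\|_{L^4}^2\|\nabla w\|$ costs $\|u\|_{L^4}^2\lesssim\|u\|\,\|\nabla u\|\le\eps^{-1/2}\|u\|_V^2$. Both are repaired by redistributing the exponents: for \eqref{inequality trilinear 2} use instead $|b(u,u,w)|=\bigl|\int u_iu_j\partial_iw_j\bigr|\le\|u\|^2\|\nabla w\|_{L^\infty}\le C\|u\|_V^2\|w\|_W$, and for \eqref{inequality trilinear 1} split
\begin{align*}
\langle \operatorname{curl}(u-\eps\Delta u)\times v,w\rangle=\langle \operatorname{curl}u\times v,w\rangle-\eps\langle \operatorname{curl}\Delta u\times v,w\rangle,
\end{align*}
estimating the first piece by $\|\operatorname{curl}u\|_{L^\infty}\|v\|\,\|w\|$ and the second by $\eps\|\operatorname{curl}\Delta u\|\,\|v\|_{L^4}\|w\|_{L^4}$, where the explicit factor $\eps$ absorbs the two factors $\eps^{-1/2}$ coming from $\|v\|_{H^1}\le\eps^{-1/2}\|v\|_V$ and $\|w\|_{H^1}\le\eps^{-1/2}\|w\|_V$. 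With these adjustments (and the same fix propagated to the $\eps$-terms of \eqref{inequality trilinear 2}, where your residuals already carry the compensating factor $\eps$), your proof yields the lemma with $\eps$-uniform constants, as the paper's conventions require.
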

\begin{theorem}Each function $f\in H^2(D)$ satisfies the following inequality:
\begin{align}\label{interpolation estimate}
\lVert f\rVert_{H^1}\leq C   \lVert f\rVert_{L^2}^{1/2}\lVert f\rVert_{H^2}^{1/2}.
\end{align}
\end{theorem}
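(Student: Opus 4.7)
The plan is to reduce to the whole-space case through an extension argument, on which the estimate can be proved by a direct Fourier computation.

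\textbf{Step 1 (Extension).} Since $D$ is a bounded domain with smooth boundary (\autoref{General Hypothesis}), the Calderón/Stein extension theorem provides a bounded linear operator $E\colon H^k(D)\to H^k(\mathbb{R}^2)$ for $k=0,1,2$, together with a universal constant $C_{\mathrm{ext}}=C_{\mathrm{ext}}(D)$ such that
\begin{equation*}
\lVert Ef\rVert_{H^k(\mathbb{R}^2)}\leq C_{\mathrm{ext}}\lVert f\rVert_{H^k(D)},\qquad k=0,1,2.
\end{equation*}
Write $\tilde f := Ef$.

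\textbf{Step 2 (Fourier estimate on $\mathbb{R}^2$).} For $\tilde f\in H^2(\mathbb{R}^2)$, Plancherel gives $\lVert \nabla \tilde f\rVert_{L^2}^2=\int_{\mathbb{R}^2}\lvert\xi\rvert^2\lvert\hat{\tilde f}(\xi)\rvert^2\,d\xi$ and $\lVert\Delta \tilde f\rVert_{L^2}^2=\int_{\mathbb{R}^2}\lvert\xi\rvert^4\lvert\hat{\tilde f}(\xi)\rvert^2\,d\xi$. Writing $\lvert\xi\rvert^2\lvert\hat{\tilde f}\rvert^2=\lvert\hat{\tilde f}\rvert\cdot\bigl(\lvert\xi\rvert^2\lvert\hat{\tilde f}\rvert\bigr)$ and applying Cauchy--Schwarz yields
\begin{equation*}
\lVert\nabla \tilde f\rVert_{L^2(\mathbb{R}^2)}^2\leq \lVert \tilde f\rVert_{L^2(\mathbb{R}^2)}\,\lVert\Delta \tilde f\rVert_{L^2(\mathbb{R}^2)}.
\end{equation*}
Hence, using $\lVert \tilde f\rVert_{L^2}\leq\lVert \tilde f\rVert_{H^2}$, we obtain
\begin{equation*}
\lVert \tilde f\rVert_{H^1(\mathbb{R}^2)}^2=\lVert \tilde f\rVert_{L^2}^2+\lVert\nabla \tilde f\rVert_{L^2}^2\leq \lVert \tilde f\rVert_{L^2}\bigl(\lVert \tilde f\rVert_{L^2}+\lVert\Delta \tilde f\rVert_{L^2}\bigr)\leq 2\,\lVert \tilde f\rVert_{L^2(\mathbb{R}^2)}\,\lVert \tilde f\rVert_{H^2(\mathbb{R}^2)}.
\end{equation*}

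\textbf{Step 3 (Transfer back to $D$).} Combining the extension bound with the previous inequality,
\begin{equation*}
\lVert f\rVert_{H^1(D)}\leq \lVert \tilde f\rVert_{H^1(\mathbb{R}^2)}\leq \sqrt{2}\,\lVert \tilde f\rVert_{L^2(\mathbb{R}^2)}^{1/2}\lVert \tilde f\rVert_{H^2(\mathbb{R}^2)}^{1/2}\leq \sqrt{2}\,C_{\mathrm{ext}}\,\lVert f\rVert_{L^2(D)}^{1/2}\lVert f\rVert_{H^2(D)}^{1/2},
\end{equation*}
which is the desired inequality with $C=\sqrt{2}\,C_{\mathrm{ext}}$.

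There is no real obstacle: the only non-trivial ingredient is the existence of a bounded extension operator preserving each of the three Sobolev scales simultaneously, which is classical for smooth bounded domains. Once the problem is transplanted to $\mathbb{R}^2$, the Fourier/Cauchy--Schwarz step is elementary. Alternatively, one could bypass the extension step by integrating $\lVert\nabla f\rVert^2$ by parts, but on a general $H^2(D)$ (without homogeneous boundary data) a boundary term appears, making the extension route the cleanest.
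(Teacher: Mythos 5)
Your proof is correct: the extension step is legitimate for a smooth bounded domain (a single Stein-type operator bounded simultaneously on $L^2$, $H^1$ and $H^2$ exists), the Fourier/Cauchy--Schwarz step on $\mathbb{R}^2$ is exact, and the transfer back to $D$ with constant $\sqrt{2}\,C_{\mathrm{ext}}$ is clean. The paper does not prove this statement itself but defers to the cited references (e.g.\ Galdi), where essentially this same extension-plus-interpolation route is the standard argument, so there is nothing to flag.
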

Now we are ready to introduce some assumptions on the stochastic part of systems \eqref{NS introduction}, \eqref{second grade introduction}.
\begin{hypothesis}\label{hypothesis noise}
$W_t=\sum_{k\in K}\sigma_k W^k_t $ where
\begin{itemize}
    \item $K$ is a (possibly countable) set of indexes, $\gamma\geq 2$. 
    \item $\sigma_k\in D((-A)^{\gamma})$ satisfying \begin{align*}
        \sum_{k\in K}\lVert \sigma_k\rVert_{D((-A)^{\gamma})}^2<+\infty.
    \end{align*}
    \item $\{W^k_t\}_{k\in K}$ is a sequence of real, independent Brownian motions adapted to $\mathcal{F}_t$.
\end{itemize}
\end{hypothesis}
We denote by $H_0$ the RKHS associated to $W_t$.
\begin{remark}\label{remark on noise 1}
Previous assumptions on the noise implies in particular that $H_0\hookrightarrow D((-A)^\gamma)$ and that $W$ is a process with continuous paths vith values in $D((-A)^{\gamma})$. Since $\lambda_i\sim Ci,$ see \cite{ilyin2008spectrum}, a simple example of noise satisfying \autoref{hypothesis noise} is $W_t=(-A)^{-\gamma-1/2-\delta}W_t^{H}$, $\delta>0$ and $W_t^{H}$ being the cylindrical Wiener process on $H$. With this particular choice of the coefficients $\sigma_k$, $H_0=D((-A)^{\gamma+1/2+\delta})$.
\end{remark}
Since we are going to prove the validity of the Large Deviation Principle via the weak convergence approach, we will need to analyze the well-posedness of some partial differential equations, slightly more general than \eqref{NS introduction}, \eqref{second grade introduction}. Therefore, let $\beta\geq 0$ and $f\in \mathcal{P}_2^N,\ N\geq 0$ arbitrary we consider the stochastic partial differential equations below  
\begin{align}\label{NS with forcing}
\begin{cases}
du^{NS,\epsilon}=(\epsilon \Delta u^{NS,\epsilon}-u^{NS,\epsilon}\cdot\nabla u^{NS,\epsilon}+\nabla p^{NS,\epsilon}+f)dt+\sqrt{\beta}dW_t\\
\div u^{NS,\epsilon}=0\\
u^{NS,\epsilon}|_{\partial D}=0\\ 
u^{NS,\epsilon}(0)=u_0,
\end{cases}    
\end{align}

\begin{align}\label{second grade with forcing}
\begin{cases}
dv^{SG,\epsilon}=(\nu \Delta u^{SG,\epsilon}-\operatorname{curl}(v^{SG,\epsilon})\times u^{SG,\epsilon}+\nabla p^{SG,\epsilon}+f)dt+\sqrt{\beta}dW_t\\
\div u^{SG,\epsilon}=0\\
v^{SG,\epsilon}=u^{SG,\epsilon}-\eps\Delta u^{SG,\epsilon}\\
u^{SG,\epsilon}|_{\partial D}=0\\ 
u^{SG,\epsilon}(0)=u_0.
\end{cases}    
\end{align}
\begin{definition}\label{weak solution NS forcing}
 A stochastic process with continuous trajectories with values in $H$
is a weak solution of equation \eqref{NS with forcing} if
\begin{align*}
u^{NS,\eps} \in C_{\mathcal{F}}([0,T];H)\cap L^2(\Omega,\mathcal{F},\mathbb{P};L^{2}(0,T;V))    
\end{align*}
and $\mathbb{P}-a.s.$ for every $t\in [0,T]$ and $\phi \in D(A)$  we have
\begin{align*}
    &\langle u^{NS,\eps}_t-u_0,\phi\rangle+\int_0^t \eps \langle \nabla u^{NS,\eps}_s,\nabla \phi\rangle_{L^2(D;\mathbb{R}^2)} =\int_0^t b(u^{NS,\eps}_s,\phi, u^{NS,\eps}_s)ds+\int_0^t \langle f_s,\phi \rangle ds +\sqrt{\beta}\langle W_t,\phi\rangle.
\end{align*}       
\end{definition}
\begin{definition}\label{weak solution second grade forcing}
A stochastic process with weakly continuous trajectories with values in $W$
is a weak solution of equation \eqref{second grade with forcing} if
\begin{align*}
u^{SG,\eps} \in L^2(\Omega,\mathcal{F},\mathbb{P};L^{\infty}(0,T;W))    
\end{align*}
and $\mathbb{P}-a.s.$ for every $t\in [0,T]$ and $\phi \in W$  we have
\begin{align*}
    &\langle u^{SG,\eps}_t-u_0,\phi\rangle_V+\int_0^t \nu \langle \nabla u^{SG,\eps}_s,\nabla \phi\rangle_{L^2(D;\mathbb{R}^2)}+ \langle \operatorname{curl}(u^{SG,\eps}_s-\eps\Delta u^{SG,\eps}_s)\times u^{SG,\eps}_s, \phi\rangle_{L^2(D)} ds\\ & =\int_0^t \langle f_s,\phi \rangle ds +\sqrt{\beta}\langle W_t,\phi\rangle.
\end{align*}    
\end{definition}
The well-posedness of \eqref{NS with forcing} (resp. \eqref{second grade with forcing}) in the sense of \autoref{weak solution NS forcing} (resp. \autoref{weak solution second grade forcing}) is guaranteed by \autoref{thm well-posed ns forcing} below, see \cite{flandoli2023stochastic} (resp. \autoref{thm well-posed sf forcing}, see \cite{razafimandimby2012strong}, \cite[Section 6]{luongo2022inviscid}).
\begin{theorem}\label{thm well-posed ns forcing}
For each $u_0\in H^3(D;\mathbb{R}^2)\cap H$ there exists a unique weak solution of \eqref{NS with forcing} in the sense of \autoref{weak solution NS forcing}. Moreover the following relation holds true \begin{align}\label{ito NS}
    \lVert u^{NS,\eps}_t\rVert^2+2\eps \int_0^t \lVert \nabla u^{NS,\eps}_s\rVert^2_{L^2}ds&=\lVert u_0\rVert^2+t\beta\sum_{k\in K}\lVert \sigma_k\rVert^2+2\sqrt{\beta}\int_0^t \langle u^{NS,\eps},dW_s\rangle+2\int_0^t\langle f_s, u^{NS,\eps}\rangle ds\quad \mathbb{P}-a.s.
\end{align}  
\end{theorem}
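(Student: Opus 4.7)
The plan is to recast \eqref{NS with forcing} as an abstract evolution equation in $H$ by applying the Leray projector, which using \eqref{definition of A}--\eqref{definition of b} yields
\begin{align*}
du^{NS,\eps} = \bigl(\eps A u^{NS,\eps} - B(u^{NS,\eps},u^{NS,\eps}) + Pf\bigr)\,dt + \sqrt{\beta}\,dW_t,
\end{align*}
with $B(u,v):=P(u\cdot\nabla v)$. Since the noise is additive, I would exploit the classical change of variable $u^{NS,\eps} = v^{\eps} + z^{\eps}$, where $z^{\eps}_t := \sqrt{\beta}\int_0^t e^{\eps A(t-s)}\,dW_s$ is the stochastic convolution. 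By \autoref{hypothesis noise} and the analyticity of the Stokes semigroup, $z^\eps$ has $\P$-a.s. continuous trajectories with values in $D((-A)^{\gamma})$, so that $v^\eps$ solves pathwise a random PDE with smooth coefficients,
\begin{align*}
\partial_t v^\eps + \eps A v^\eps + B(v^\eps+z^\eps,\, v^\eps+z^\eps) = Pf,\qquad v^\eps(0) = u_0,
\end{align*}
which reduces the problem, for each fixed $\omega$, to a deterministic 2D Navier--Stokes analysis.

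For existence I would employ a Galerkin scheme along the Stokes eigenbasis $\{e_i\}$ (\autoref{eigenfunction Stokes}). The projected system is a finite-dimensional SDE with locally Lipschitz drift and constant diffusion, hence globally well-posed once a priori estimates are available. Applying Itô's formula to $\|u^{NS,\eps}_n\|^2$ and using the antisymmetry $b(\phi,\psi,\phi)=0$ to kill the nonlinear term, then Cauchy--Schwarz on the forcing term and Burkholder--Davis--Gundy on the martingale, produces a uniform bound
\begin{align*}
\EE\Bigl[\sup_{t\in[0,T]}\|u^{NS,\eps}_n\|^2 + \eps\int_0^T\|\nabla u^{NS,\eps}_n\|_{L^2}^2\,dt\Bigr] \leq C\bigl(\|u_0\|,\,\|f\|_{L^2(0,T;\0)},\,\beta,\,T\bigr),
\end{align*}
independent of $n$. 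An Aubin--Lions compactness argument combined with Skorokhod's theorem (or, exploiting the additive noise, a direct pathwise compactness argument on $v^\eps_n$) extracts a subsequence converging in $C([0,T];H)\cap L^2(0,T;V)$ to a weak solution; the 2D Ladyzhenskaya inequality $\|\phi\|_{L^4}^2\leq C\|\phi\|\,\|\nabla\phi\|_{L^2}$ identifies the limit of the nonlinear term.

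Uniqueness and the energy identity \eqref{ito NS} then complete the argument. For uniqueness, the difference $w = u^{(1)}-u^{(2)}$ of two solutions sharing initial data and forcing is noise-free, and the standard estimate $|b(w,u^{(2)},w)|\leq \eps\|\nabla w\|_{L^2}^2 + C\eps^{-1}\|w\|^2\|\nabla u^{(2)}\|_{L^2}^2$ (via Ladyzhenskaya) together with Gronwall yields $w\equiv 0$, since $u^{(2)}\in L^2(0,T;V)$. The Itô relation \eqref{ito NS} follows either from the Lions--Magenes infinite-dimensional Itô formula for the squared Hilbert norm, applicable in view of the regularity $du^{NS,\eps}\in L^2(0,T;V^*)+\sqrt{\beta}\,dW$ with $u^{NS,\eps}\in L^2(0,T;V)$, or by passing to the limit in the Galerkin energy identity. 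I expect this last step to be the main technical obstacle: the direct Itô formula on $H$ is not applicable because $u^{NS,\eps}$ is not $D(A)$-valued, so one must argue through the Gelfand triple $V\hookrightarrow H\hookrightarrow V^*$ and carefully control the convergence of the stochastic integral term in the Galerkin limit, for which the uniform estimates above together with the trace-class assumption on $W$ are essential.
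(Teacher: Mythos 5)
Your proposal is correct and follows essentially the same route as the reference to which the paper delegates this theorem (\cite{flandoli2023stochastic}): subtraction of the stochastic convolution to reduce to a pathwise 2D Navier--Stokes problem, Galerkin approximation with cancellation of the cubic term in the energy estimate, Ladyzhenskaya-based uniqueness, and the identity \eqref{ito NS} via the variational It\^o formula in the Gelfand triple $V\hookrightarrow H\hookrightarrow V^*$ (or by passing to the limit in the Galerkin energy balance). The only slip is notational: with the paper's convention \eqref{definition of b} the general identity $b(\phi,\psi,\phi)=0$ is false, and what is actually used is the diagonal cancellation $b(u,u,u)=0$ (coming from the antisymmetry of $b$ in the appropriate pair of arguments for divergence-free fields), so the argument itself is unaffected.
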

\begin{theorem}\label{thm well-posed sf forcing}
For each $u_0\in W$ there exists a unique weak solution of \eqref{second grade with forcing} in the sense of \autoref{weak solution second grade forcing}. Moreover $u^{SG,\eps}$ has continuous paths with values in $V$ and it holds
\begin{align}\label{ITO SG 1}
     \lVert u^{SG,\eps}_t\rVert_V^2+2\nu \int_0^t \lVert \nabla u^{SG,\eps}_s\rVert^2_{L^2}ds&=\lVert u_0\rVert_V^2+t\beta\sum_{k\in K}\lVert (I-\eps A)^{-1/2}\sigma_k\rVert^2\notag\\ &+2\sqrt{\beta}\int_0^t \langle u^{SG,\eps},dW_s\rangle+2\int_0^t\langle f_s, u^{SG,\eps}\rangle ds\quad \mathbb{P}-a.s.
\end{align}
Calling $q^{SG,\eps}=\operatorname{curl}(u^{SG,\eps}-\eps \Delta u^{SG,\eps}),\ s_k=\operatorname{curl}\sigma_k$ it holds
\begin{align}\label{ITO SG 2}
 \lVert q^{SG,\eps}_t\rVert_{L^2}^2 & =\lVert u_0\rVert_*^2-\frac{2\nu}{\eps}\int_0^t\langle q_s^{SG,\eps}-\operatorname{curl}u_s^{SG,\eps},q_s^{SG,\eps}\rangle ds+t\beta\sum_{k\in K}\lVert s_k\rVert^2\notag\\ &+2\sqrt{\beta}\sum_{k\in K}\int_0^t \langle s_k,q_s\rangle dW^k_s+2\int_0^t \langle \operatorname{curl}f_s,q_s\rangle ds\quad \mathbb{P}-a.s.
\end{align}
\end{theorem}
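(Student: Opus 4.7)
The plan is to run a classical Galerkin scheme. I would use the Stokes eigenfunctions $\{e_i\}$ of \autoref{eigenfunction Stokes}, which are smooth and hence lie in $\hat W\cong W$, as the approximating basis, and project \eqref{second grade with forcing} onto $V_n=\operatorname{span}\{e_1,\dots,e_n\}$ with respect to the $V$-inner product. The resulting finite-dimensional SDE has locally Lipschitz coefficients, and local solutions extend globally once the a priori estimates below are in place.

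The key a priori bounds are essentially the two It\^o identities themselves, which I would derive first at the Galerkin level. For \eqref{ITO SG 1}, apply It\^o's formula to $\lVert u^n\rVert_V^2=\langle u^n,v^n\rangle$; the relation $du^n=(I-\eps A)^{-1}dv^n$ produces the quadratic variation $\beta\sum_k\lVert(I-\eps A)^{-1/2}P_n\sigma_k\rVert^2\,dt$, while antisymmetry \eqref{antisimmetry hatB} kills the trilinear term. BDG together with Gronwall then yield $\EE[\sup_t\lVert u^n\rVert_V^2+\int_0^T\lVert\nabla u^n\rVert^2\,ds]\le C$. For \eqref{ITO SG 2}, I would take the curl of the equation; the 2D identity $\operatorname{curl}(\operatorname{curl}(v)\times u)=u\cdot\nabla\operatorname{curl}(v)$ together with $\operatorname{curl}(\Delta u)=\eps^{-1}(\operatorname{curl}u-q)$ yields a transport-type SPDE for $q^n$. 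Applying It\^o to $\lVert q^n\rVert^2$ and using the vanishing of $\langle u\cdot\nabla q,q\rangle$ (from $\div u=0$ and $u|_{\partial D}=0$) produces \eqref{ITO SG 2}. Treating the cross term $\nu\eps^{-1}\langle\operatorname{curl}u,q\rangle$ via Young, absorbing the $q$-part into the dissipation $\nu\eps^{-1}\lVert q\rVert^2$ and the $\operatorname{curl}u$-part into the already controlled $V$-norm, and closing with BDG and Gronwall, gives $\EE[\sup_t\lVert q^n\rVert^2]\le C$, hence via \eqref{equivalence H3-W} a uniform $L^\infty(0,T;H^3)$ bound on $u^n$.

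From here I would pass to the limit by tightness. The $L^\infty(0,T;W)$ bound on $u^n$ together with a uniform $L^2(0,T;W^*)$ bound on the drift (using \eqref{inequality bilinear 2}) produce, via an Aubin--Lions argument and the Jakubowski--Skorokhod representation, a subsequence converging almost surely on a new probability space in $C([0,T];V)\cap L^\infty(0,T;W)_{\text{weak-}*}$. The bilinear estimates \eqref{inequality bilinear 1}--\eqref{inequality bilinear 2} then suffice to identify the nonlinearity in the limit, producing a martingale solution; \eqref{ITO SG 1}--\eqref{ITO SG 2} for the limit follow by lower semicontinuity of the norms and convergence of the stochastic integrals. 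Continuity in $V$ of the limit is a consequence of \eqref{ITO SG 1}, whereas weak continuity in $W$ is a Lions--Strauss type consequence of the bounded $W$-norm together with the equation in $W^*$.

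Pathwise uniqueness, which combined with weak existence gives strong existence by Yamada--Watanabe, is the most delicate step and the main obstacle. For two solutions $u_1,u_2$ with $z=u_1-u_2$ the additive noise cancels and It\^o on $\lVert z\rVert_V^2$ produces the nonlinearity difference $\hat B(z,u_1)+\hat B(u_2,z)$; the second term vanishes when paired with $z$ by \eqref{antisimmetry hatB}, while the first is rewritten using \eqref{antisimmetry hatB} as $-\langle\hat B(z,z),u_1\rangle_{W^*,W}$ and bounded via \eqref{inequality bilinear 2} by $C\lVert z\rVert_V^2\lVert u_1\rVert_W$. Since $u_1\in L^\infty(0,T;W)$ almost surely, Gronwall closes the argument. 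The essential difficulty throughout is the third-order nonlinearity $\operatorname{curl}(v)\times u$: the equation does not fit into the classical monotone framework, and closing both the vorticity a priori estimate and the uniqueness argument relies crucially on the 2D structure together with the full $W$-regularity of at least one of the arguments.
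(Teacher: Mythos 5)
The paper does not prove this theorem: it is imported verbatim from \cite{razafimandimby2012strong} and \cite[Section 6]{luongo2022inviscid}, and your proposal reconstructs essentially the argument of those references (Galerkin approximation, the two energy/enstrophy identities as a priori bounds, tightness plus Jakubowski--Skorokhod, identification of the limit, and pathwise uniqueness via the antisymmetry \eqref{antisimmetry hatB} combined with \eqref{inequality bilinear 2} and Yamada--Watanabe). All the structural points you make are right: the quadratic variation $\beta\sum_k\lVert(I-\eps A)^{-1/2}\sigma_k\rVert^2$ in \eqref{ITO SG 1}, the identity $\operatorname{curl}\Delta u=\eps^{-1}(\operatorname{curl}u-q)$ producing the $\frac{2\nu}{\eps}$ term in \eqref{ITO SG 2}, and the uniqueness estimate $\lvert\langle\hat B(z,z),u_1\rangle\rvert\le C\lVert z\rVert_V^2\lVert u_1\rVert_W$ closed by Gr\"onwall using $u_1\in L^\infty(0,T;W)$.

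The one point you should not gloss over is the choice of Galerkin basis. The delicate step is deriving \eqref{ITO SG 2} \emph{at the approximate level}: to compute $d\lVert q^n\rVert^2$ you must test the projected equation against (essentially) $-Av^n=\nabla^\perp q^n$ modulo a gradient, and verify that both the projected transport term and the Leray-projection defect $(I-P)(q^n\times u^n)$ still pair to zero with this test function; with the Stokes eigenfunctions this cancellation is not automatic, because the vorticity of the projected solution does not solve the projected vorticity equation. This is precisely why \cite{cioranescu1984existence} and \cite{razafimandimby2012strong} build the Galerkin scheme on the special basis of eigenfunctions of the injection of (a subspace of) $H^4\cap W$ into $W$, for which the projections are simultaneously orthogonal in $V$ and in the $\lVert\cdot\rVert_*$ inner product and the enstrophy cancellation survives projection. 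If you switch to that basis (or justify the cancellation for the Stokes basis explicitly), the rest of your argument goes through as written.
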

Lastly we need to recall some results about the well-posedness of Euler equations with forcing term $f\in \mathcal{P}_2^N$, namely solutions of \begin{align}\label{euler forcing}
\begin{cases}
        \partial_t \overline{u}+\overline{u}\cdot\nabla\overline{u}=\nabla \overline{p}+f\\
        \div \overline{u}=0\\
        \overline{u}\cdot n|_{\partial D}=0\\ 
        \overline{u}(0)=u_0.
\end{cases}
\end{align}
\begin{definition}\label{weak euler}
A stochastic process with continuous trajectories with values in $H$
is a weak solution of equation \eqref{euler forcing} if
 $\mathbb{P}-a.s.$ for every $t\in [0,T]$ and $\phi \in C^{\infty}_c(D;\mathbb{R}^2)$  we have
\begin{align*}
    &\langle \overline{u}_t-u_0,\phi\rangle =\int_0^t b(\overline{u}_s,\phi, \overline{u}_s)ds-\int_0^t \langle f_s,\phi \rangle ds.
\end{align*}      
\end{definition}
The well-posedness of \eqref{euler forcing} in regular spaces is a classical result, see for example \cite{bessaih19992},\cite{bessaih2020invariant},\cite{10.1214/12-AOP773}.
\begin{theorem}\label{thm well posed euler forcing}
    For each $u_0\in W $ there exists a unique weak solution of \eqref{euler forcing} with trajectories in $C([0,T];W^{2,4}(D;\mathbb{R}^2))$. Moreover
    \begin{align}\label{Ito Euler }
        \lVert\overline{u}_t\rVert^2=\lVert u_0\rVert^2+2\int_0^t \langle f_s,\overline{u}_s\rangle ds\quad \mathbb{P}-a.s.
    \end{align}
    \begin{align}\label{Energy Euler }
        \operatorname{sup}_{t\in [0,T]}\lVert\overline{u}_t\rVert_{W^{2,4}}\leq C(\lVert u_0\rVert_{W^{2,4}},N)\quad \mathbb{P}-a.s.
    \end{align}
\end{theorem}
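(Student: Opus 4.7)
The plan is to exploit the absence of stochastic integrals in \eqref{euler forcing} and reduce to the classical deterministic well-posedness theory for the 2D Euler equations, pathwise. By \autoref{hypothesis noise} and \autoref{remark on noise 1}, $\0\hookrightarrow D((-A)^\gamma)$ with $\gamma\geq 2$, so any $f \in \mathcal{P}_2^N$ has $\mathbb{P}$-a.s. trajectories in $L^2(0,T;H^4(D;\mathbb{R}^2)\cap V)$ with squared norm bounded by $CN$. Consequently, for $\mathbb{P}$-almost every $\omega$, equation \eqref{euler forcing} becomes a deterministic 2D Euler equation with datum $u_0 \in W \hookrightarrow H^3$ and a regular forcing, and the task reduces to constructing and controlling the solution pathwise and then verifying adaptedness.

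Existence and uniqueness in $C([0,T];H^3)$ I would obtain through the vorticity formulation: setting $q=\operatorname{curl}\overline{u}$, one derives the transport equation $\partial_t q+\overline{u}\cdot\nabla q=\operatorname{curl} f$, and the divergence-free structure immediately propagates the $L^p$ norms of $q$, namely $\|q(t)\|_{L^p}\leq \|q(0)\|_{L^p}+\int_0^t \|\operatorname{curl} f_s\|_{L^p}\,ds$ for every $p\in[1,\infty]$. A Galerkin or mollification scheme, combined with a logarithmic Beale--Kato--Majda type bound $\|\nabla\overline{u}\|_{L^\infty}\lesssim (1+\log(1+\|\overline{u}\|_{H^3}))\|q\|_{L^\infty}$ and Gronwall, closes the a priori estimate for $q$ in $W^{1,4}$; elliptic regularity for the div-curl system with boundary condition $\overline{u}\cdot n|_{\partial D}=0$ then upgrades this to the desired $W^{2,4}$ control on $\overline{u}$, which is \eqref{Energy Euler }. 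The energy identity \eqref{Ito Euler } follows by testing the weak formulation against $\overline{u}$ itself and using the cancellation $b(\overline{u},\overline{u},\overline{u})=0$, which is legitimate since $\overline{u}$ is sufficiently regular.

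Progressive measurability of $\overline{u}$ as a process with values in $C([0,T];W^{2,4})$ then follows from pathwise uniqueness together with the continuity of the data-to-solution map $(u_0,f)\mapsto \overline{u}$ in the relevant topologies and the progressive measurability of $f$. The main technical point I expect is not existence per se—which is textbook material in the cited references \cite{bessaih19992, bessaih2020invariant, 10.1214/12-AOP773}—but the careful tracking of constants in \eqref{Energy Euler }: the bound must depend only on $\|u_0\|_{W^{2,4}}$ and $N$, uniformly in the realization of $f$. This is precisely what the logarithmic Beale--Kato--Majda estimate is designed for, as it prevents the double-exponential blowup one would otherwise encounter and allows the Gronwall argument on $[0,T]$ to close using only the deterministic bound $\int_0^T \|f_s\|_{H^4}^2\,ds \leq CN$ that \autoref{hypothesis noise} guarantees for $f\in \mathcal{P}_2^N$.
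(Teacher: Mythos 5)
The paper offers no proof of this theorem: it is presented as a classical well-posedness result and delegated entirely to the references \cite{bessaih19992}, \cite{bessaih2020invariant}, \cite{10.1214/12-AOP773}. Your pathwise reduction (no stochastic integral, so for a.e.\ $\omega$ one faces a deterministic 2D Euler system with forcing in $L^2(0,T;H^4)$) followed by the vorticity transport equation, propagation of $L^p$ norms of $q=\operatorname{curl}\overline{u}$, a logarithmic estimate on $\lVert\nabla\overline{u}\rVert_{L^\infty}$, Gr\"onwall, and div--curl elliptic regularity to recover $W^{2,4}$ control of $\overline{u}$, is exactly the standard argument underlying those citations, and the derivation of \eqref{Ito Euler } by testing with $\overline{u}$ (using $b(\overline{u},\overline{u},\overline{u})=0$, legitimate at this regularity and with $\overline{u}\cdot n|_{\partial D}=0$) together with the measurability argument via pathwise uniqueness and continuity of the data-to-solution map are all sound; so your sketch is an acceptable substitute for the citation. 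Two small points of precision. First, the role of the Beale--Kato--Majda type inequality is slightly misstated: it does not \emph{prevent} double-exponential growth, it is what \emph{yields} an a priori bound that is at worst double exponential in time (the naive estimate $\frac{d}{dt}\lVert \overline{u}\rVert_{H^s}\lesssim \lVert\nabla\overline{u}\rVert_{L^\infty}\lVert\overline{u}\rVert_{H^s}$ simply does not close without it); on the fixed interval $[0,T]$ this is still a finite constant depending only on $T$, $\lVert u_0\rVert_{W^{2,4}}$ and the deterministic bound $\int_0^T\lVert f_s\rVert_{H_0}^2\,ds\le N$, which is all that \eqref{Energy Euler } requires. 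Second, for the existence step in a bounded domain with the slip condition $\overline{u}\cdot n|_{\partial D}=0$ a plain Galerkin scheme is less convenient than mollification of the vorticity/velocity map or a vanishing-viscosity approximation with Navier boundary conditions; since you only need to invoke one such construction and the references supply it, this is a presentational rather than a substantive gap.
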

\begin{remark}\label{remark on noise 2}
   The well-posedness of \autoref{NS with forcing}, \autoref{second grade with forcing} holds under weaker assumptions on the noise than \autoref{hypothesis noise}. We need to assume a noise so regular in order to guarantee that there exists a unique solution of \eqref{euler forcing} which belongs to $C([0,T];W^{2,4}(D;\mathbb{R}^2))\cap C([0,T];H)$.
\end{remark}
\subsection{Main Results}\label{subsec:main result}
As stated in \autoref{sec:introduction}, our goal is to prove a Large Deviation Principle via the weak convergence approach introduced in \autoref{subsec:LDP abstract}. Therefore we need to introduce some maps $\mathcal{G}^{NS,\eps},\ \mathcal{G}^{SG,\eps},\ \mathcal{G}^{0}.$ Following the notation of \autoref{subsec:LDP abstract}, let
\begin{align*}
   \mathcal{E}_0^{NS}:=H^3(D;\mathbb{R}^2)\cap H,\quad \mathcal{E}_0^{SG}:=W,\quad \mathcal{E}:=C([0,T];H).
\end{align*}
According to \autoref{thm well posed euler forcing} we can introduce the measurable map
\begin{align*}
    \mathcal{G}^{NS,0}:\mathcal{E}_0^{NS}\times C([0,T];H)\rightarrow \mathcal{E}\quad \left(\textit{resp }   \mathcal{G}^{SG,0}:\mathcal{E}_0^{SG}\times C([0,T];H)\rightarrow \mathcal{E}\right)
\end{align*}
which associates to each $u_0\in\mathcal{E}_0^{NS} $ (resp. $u_0\in\mathcal{E}_0^{SG} $)  and $\int_0^\cdot f_s ds,\ f \in L^2(0,T;H_0)$ the unique regular solution of \eqref{euler forcing} with initial condition $u_0$ and forcing term $f$ guaranteed by \autoref{thm well posed euler forcing}, $0$ otherwise.
Analogously thanks to \autoref{thm well-posed ns forcing} (resp. \autoref{thm well-posed sf forcing}) we can introduce the measurable map
\begin{align*}
    \mathcal{G}^{NS,\eps}:\mathcal{E}_0^{NS}\times C([0,T];H)\rightarrow \mathcal{E}\quad \left(\textit{resp }   \mathcal{G}^{SG,\eps}:\mathcal{E}_0^{SG}\times C([0,T];H)\rightarrow \mathcal{E}\right)
\end{align*}
such that for each $u_0\in\mathcal{E}_0^{NS} $ (resp. $u_0\in\mathcal{E}_0^{SG} $), $\mathcal{G}^{NS,\eps}(u_0,\sqrt{\eps}W_\cdot)$ (resp. $\mathcal{G}^{SG,\eps}(u_0,\sqrt{\eps}W_\cdot)$ is the unique weak solution of \eqref{NS with forcing} (resp. \eqref{second grade with forcing}) with $\beta=\eps$, initial condition $u_0$ and null forcing term guaranteed by \autoref{thm well-posed ns forcing} (resp. \autoref{thm well-posed sf forcing}). More in general, it follows that, if $f\in \mathcal{P}_2^N$, $\mathcal{G}^{NS,\eps}(u_0,\sqrt{\eps}W_\cdot+\int_0^\cdot f_s ds )$ (resp. $\mathcal{G}^{SG,\eps}(u_0,\sqrt{\eps}W_\cdot+\int_0^\cdot f_s ds )$) is the unique solution of \eqref{NS with forcing} (resp. \eqref{second grade with forcing}) $\beta=\eps$, initial condition $u_0$ and forcing term $f$.
When dealing with the inviscid limit for Navier-Stokes equations and no-slip boundary conditions one can choose either to assume a Kato-type hypothesis or to require strong assumptions on the regularity of the domain, initial conditions and forcing term. We will follow both these lines. In the following, given $c>0$, we will denote $\Gamma_{c\eps}=\{x\in D:\quad d(x,\partial D)\leq c\eps\}$.
\begin{hypothesis}[Strong Kato Hypothesis]\label{strong kato hp}
 For each $N\in \mathbb{N}$, $u_0^{\eps},\ u_0\in \mathcal{E}_0^{NS}$ and $f^{\eps},\ f\in \mathcal{P}_2^N$ such that $u_0^{\eps}\rightarrow u_0$ in $\mathcal{E}_0^{NS}$ and $f^{\eps}\rightarrow_{\mathcal{L}} f$ in $S^N$, if $(\Omega,\mathcal{F},\mathcal{F}_t,\mathbb{P})$  is a filtered probability space where all $f^{\eps}$, $f$ are defined together and $f^{\eps}\rightarrow f \ \mathbb{P}-a.s.$ in $S^N$, then,  it exists $c>0$ such that for every $\delta>0$
\begin{align*}
    \mathbb{P}\left(\eps\int_0^T\left\lVert \nabla \mathcal{G}^{NS,\eps}\left(u_0^{\eps}, \sqrt{\eps}W_\cdot+\int_0^\cdot f^{\epsilon}_s ds \right) \right\rVert_{L^2(\Gamma_{c\eps})}^2 ds>\delta\right)\rightarrow 0.
\end{align*}    
\end{hypothesis}
We will come back on the meaning of \autoref{strong kato hp} and its relation with a more classical version of the Kato Hypothesis, i.e. non depending from $f^{\eps}$, in \autoref{sec:Remark Kato}.
\begin{remark}
    By Skorokhod's representation theorem, given $f^{\eps},\ f\in \mathcal{P}_2^N$ such that $f^{\eps}\rightarrow_{\mathcal{L}} f$ in $S^N$ there exists at least a filtered probability space $(\Omega,\mathcal{F},\mathcal{F}_t,\mathbb{P})$ where all $f^{\eps}$, $f$ are defined together and $f^{\eps}\rightarrow f \ \mathbb{P}-a.s.$ in $S^N$.
\end{remark}
Now we are ready to state our main result on the validity of a Large Deviation Principle under \autoref{strong kato hp}.
\begin{theorem}\label{main thm LDP NS}
Assuming \autoref{strong kato hp}, the solutions $\{u^{NS,\eps}=\mathcal{G}^{NS,\eps}(u_0,\sqrt{\eps}W_\cdot)\}_{u_0\in \mathcal{E}_0^{NS}}$ satisfy the uniform Laplace principle with the rate function \begin{align*}
    I_{u_0}^{NS}(v)=\operatorname{inf}_{f\in L^2(0,T;H_0):\ v=\mathcal{G}^{NS,0}(u_0,\int_0^\cdot f_s ds )}\frac{1}{2}\int_0^T \| f_s\|_{H_0}^2 ds 
\end{align*}
where $u_0\in \mathcal{E}_0^{NS},\ v\in C([0,T];H)$.
\end{theorem}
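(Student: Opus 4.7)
The plan is to apply the abstract weak convergence criterion \autoref{weak convergence approach abstract thm} to the family $\mathcal{G}^{NS,\eps}$ and the candidate limit $\mathcal{G}^{NS,0}$ on $\E = C([0,T];H)$, which reduces everything to verifying the two items of \autoref{conditions weak convergence approach} together with the lower semicontinuity in $u_0$ of the rate function. The lower semicontinuity will be a consequence of the continuity of $\mathcal{G}^{NS,0}$ in its first argument (obtained from \autoref{thm well posed euler forcing} and a standard Gronwall argument on the difference of two Euler solutions with the same forcing).

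For the compactness item (1), fix $N<\infty$ and a compact $K\subset \mathcal{E}_0^{NS}$. I would show that the deterministic map $(u_0,f)\mapsto \mathcal{G}^{NS,0}(u_0,\int_0^\cdot f_s\,ds)$ is continuous from $K\times S^N$ (with the weak topology on $S^N$) into $C([0,T];H)$. The energy identity \eqref{Ito Euler } and the regularity estimate \eqref{Energy Euler } give uniform $C([0,T];W^{2,4})$ bounds for the Euler flow, hence relative compactness in $C([0,T];H)$ via Aubin--Lions (use the equation to control time differences). To identify limits it suffices to pass to the limit in the weak formulation of \eqref{euler forcing}, where the weak $L^2(0,T;\0)$ convergence of the forcing interacts only linearly with a test function, and the bilinear term $b(\overline{u},\phi,\overline{u})$ passes thanks to the strong convergence of $\overline{u}$ in $C([0,T];H)$.

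For the convergence item (2), let $\tilde u^\eps := \mathcal{G}^{NS,\eps}\bigl(u_0^\eps,\sqrt{\eps}W_\cdot + \int_0^\cdot f^\eps_s\,ds\bigr)$. Apply \eqref{ito NS} with forcing $f^\eps\in\mathcal{P}_2^N$ and noise $\sqrt{\eps}dW$: using BDG, Cauchy--Schwarz on $\int_0^t \langle f^\eps_s,\tilde u^\eps_s\rangle\,ds$ (controlled because $\|f^\eps\|_{L^2(0,T;\0)}^2\le N$ almost surely), Gronwall, and the fact that $\eps \sum\|\sigma_k\|^2\to 0$, I obtain uniform bounds for $\EE[\sup_{t\le T}\|\tilde u^\eps_t\|^2 + \eps\int_0^T\|\nabla \tilde u^\eps_s\|_{L^2}^2\,ds]$. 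Combined with fractional time regularity coming directly from the equation, this yields tightness of $\tilde u^\eps$ in $C([0,T];D(A)^*)\cap L^2(0,T;H)$ and of the pair $(\tilde u^\eps,f^\eps,\sqrt{\eps}W_\cdot)$ on a suitable product space. By Skorokhod, pass to a new probability space on which the triple converges almost surely; the noise term vanishes and one passes to the limit in the weak form to identify any limit point as the unique solution $\overline{u}=\mathcal{G}^{NS,0}(u_0,\int_0^\cdot f_s\,ds)$ guaranteed by \autoref{thm well posed euler forcing}, using the regularity of $\overline{u}$ to handle the nonlinear term.

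The main obstacle, and the reason the Kato condition enters, is upgrading the weak convergence above to strong convergence in $\E = C([0,T];H)$, which the weak formulation alone does not provide because the limit lives only in $H$ and the boundary layer carries a nontrivial amount of $L^2$ mass. Following Kato's classical argument, I would build a boundary corrector $\overline{v}^\eps$, supported in $\Gamma_{c\eps}$ and satisfying $\overline{v}^\eps|_{\partial D} = \overline{u}|_{\partial D}$, with the standard estimates $\|\overline{v}^\eps\|_{L^2} = O(\eps^{1/2})$, $\eps\|\nabla \overline{v}^\eps\|_{L^2}^2 = O(1)$, $\|\partial_t \overline{v}^\eps\|_{L^2}=O(\eps^{1/2})$, then apply Itô's formula to $\|\tilde u^\eps - \overline{u} - \overline{v}^\eps\|^2$. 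Every interaction term is either supported in $\Gamma_{c\eps}$ (so it is integrated against a region of measure $O(\eps)$ and handled by Hölder using the $W^{2,4}$-regularity of $\overline{u}$ from \eqref{Energy Euler }) or reduces to an inner product of $\sqrt{\eps}\,\nabla\tilde u^\eps$ with $\sqrt{\eps}\,\nabla \overline{v}^\eps$ on $\Gamma_{c\eps}$, which by Cauchy--Schwarz is controlled precisely by the quantity $\eps\int_0^T\|\nabla \tilde u^\eps\|_{L^2(\Gamma_{c\eps})}^2\,ds$ that \autoref{strong kato hp} forces to vanish in probability; the additional stochastic integral contributions coming from $\sqrt{\eps}\,dW$ vanish by BDG since the noise intensity is $O(\sqrt{\eps})$. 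A Gronwall argument then yields $\sup_{t\le T}\|\tilde u^\eps_t - \overline{u}_t\|\to 0$ in probability, which together with Skorokhod upgrades the already established convergence in law to convergence in $C([0,T];H)$, completing the verification of \autoref{conditions weak convergence approach}.
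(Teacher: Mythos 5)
Your overall architecture is the paper's: verify the two items of \autoref{conditions weak convergence approach} plus lower semicontinuity of $u_0\mapsto I_{u_0}^{NS}(v)$, and for item (2) run a Kato-type energy estimate with a boundary-layer corrector, with \autoref{strong kato hp} killing the term $\eps\int_0^T\|\nabla u^\eps\|^2_{L^2(\Gamma_{c\eps})}ds$. For item (1) you take a compactness-plus-uniqueness route (Aubin--Lions and passage to the limit in the weak formulation), whereas the paper proves a quantitative Yudovich-type stability estimate via the stream function (\autoref{lemma convergence euler}, ending in \eqref{final convergence euler}); both are viable, and joint continuity of $\mathcal{G}^{NS,0}$ in $(u_0,f)$ is what is actually needed for the lower-semicontinuity step, which your item (1) does supply.

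There is, however, a genuine gap in your verification of item (2): you never explain how the \emph{weak} convergence of the controls enters the energy estimate. After Skorokhod, $f^\eps\to f$ only weakly in $L^2(0,T;H_0)$, and It\^o's formula applied to $\|\tilde u^\eps-\overline{u}-\overline{v}^\eps\|^2$ produces the term $2\int_0^t\langle f^\eps_s-f_s,\,\tilde u^\eps_s-\overline{u}_s-\overline{v}^\eps_s\rangle\,ds$, in which both factors vary with $\eps$; it cannot be absorbed by Gr\"onwall (since $\int_0^T\|f^\eps_s-f_s\|^2ds\not\to0$) nor sent to zero by weak convergence alone. The paper's device is to set $z^\eps_t=\int_0^te^{\eps A(t-s)}f^\eps_s\,ds$, prove $z^\eps\to F=\int_0^\cdot f_s\,ds$ \emph{strongly} in $C([0,T];D((-A)^{\gamma-1/2-\theta}))$ (\autoref{convergence forcing}, exploiting that the time-integrated controls converge strongly), and run the Kato argument on $v^\eps=u^\eps-z^\eps$ versus $v^E=u^E-F$, so that only differences $F-z^\eps$ (strongly small) ever appear. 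Some equivalent integration-by-parts-in-time device is indispensable and is missing from your sketch. Two further points: (i) your preliminary ``tightness plus identification of the weak limit'' step is not actually available here --- identifying the weak-$*$ limit of $u^\eps\otimes u^\eps$ from the bounds $\sup_t\|u^\eps_t\|^2+\eps\int_0^T\|\nabla u^\eps\|^2\le C$ alone is essentially the open inviscid-limit problem --- but this step is also superfluous once the direct energy argument closes; (ii) the decisive Kato term $\int_0^tb(u^\eps_s,\overline{v}^\eps_s,u^\eps_s)\,ds$, quadratic in $u^\eps$ against $\nabla\overline{v}^\eps=O(\delta^{-1})$, falls into neither of your two categories of ``interaction terms'' and needs the Hardy-inequality bound $|b(u^\eps,\overline{v}^\eps,u^\eps)|\le C\delta\|\nabla u^\eps\|^2_{L^2(\Gamma_\delta)}$ (cf.\ \eqref{step 13 main proof}); also the corrector should satisfy $\overline{u}-\overline{v}^\eps\in V$, i.e.\ you should estimate $\|\tilde u^\eps-(\overline{u}-\overline{v}^\eps)\|^2$, otherwise the viscous term cannot be integrated by parts without a boundary contribution.
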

As pointed out in \cite{lopes2015approximation}, \cite{luongo2022inviscid} in order to obtain an unconditioned result for the Second-Grade fluid equations, we cannot take any scaling of $\nu\rightarrow 0$ but it is necessary to assume:
\begin{hypothesis}\label{Second Grade Fluids assumption}
 $\nu=O(\eps)$.   
\end{hypothesis}
Now we can state our main result on the Second Grade Fluid equations.
\begin{theorem}\label{main thm LDP 2GF}
Assuming \autoref{Second Grade Fluids assumption}, the solutions $\{u^{SG,\eps}=\mathcal{G}^{SG,\eps}(u_0,\sqrt{\eps}W_\cdot)\}_{u_0\in \mathcal{E}_0^{SG}}$ satisfy the uniform Laplace principle with the rate function \begin{align*}
    I^{SG}_{u_0}(v)=\frac{1}{2}\operatorname{inf}_{f\in L^2(0,T;H_0):\ v=\mathcal{G}^{SG,0}(u_0,\int_0^\cdot f_s ds )}\int_0^T \lVert f_s\rVert_{H_0}^2 ds 
\end{align*}
where $u_0\in \mathcal{E}_0^{SG},\ v\in C([0,T];H)$.
\end{theorem}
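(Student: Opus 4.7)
The plan is to apply \autoref{weak convergence approach abstract thm}, verifying the two conditions of \autoref{conditions weak convergence approach} with $\mathcal{E}_0=\mathcal{E}_0^{SG}=W$, $\mathcal{E}=C([0,T];H)$, $\mathcal{G}^{\eps}=\mathcal{G}^{SG,\eps}$ and $\mathcal{G}^0=\mathcal{G}^{SG,0}$. Well-posedness of both the Euler skeleton and the controlled stochastic Second-Grade system is already supplied by \autoref{thm well posed euler forcing} and \autoref{thm well-posed sf forcing}, so one need only establish the two topological properties demanded there, plus the lower semicontinuity of the prospective rate function.

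For condition (1), fix a compact $K\subset W$ and $N<\infty$. The estimate \eqref{Energy Euler } shows that the Euler skeletons $\{\mathcal{G}^{SG,0}(x,\int_0^\cdot v_s\,ds):(x,v)\in K\times S^N\}$ are uniformly bounded in $L^\infty(0,T;W^{2,4}(D;\R^2))$, and equicontinuity in $H$ follows directly from the PDE together with the uniform $L^2(0,T;\0)$-bound on $v\in S^N$. Compactness of $\Gamma_{K,N}$ in $C([0,T];H)$ then follows by Arzelà-Ascoli, together with continuity of the map $(x,v)\mapsto \mathcal{G}^{SG,0}(x,\int_0^\cdot v_s\,ds)$ from $K\times S^N$ (endowed with its weak topology) to $C([0,T];H)$, a consequence of standard energy-difference/weak-strong uniqueness estimates for 2D Euler.

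For condition (2), given $x^\eps\to x$ in $W$ and $f^\eps\to f$ in law as $S^N$-valued random elements, one appeals to Skorokhod's theorem to reduce to $\mathbb{P}$-a.s. convergence in $S^N$. Setting $u^{SG,\eps}:=\mathcal{G}^{SG,\eps}(x^\eps,\sqrt{\eps}W+\int_0^\cdot f^\eps_s\,ds)$ and $\bar u:=\mathcal{G}^{SG,0}(x,\int_0^\cdot f_s\,ds)$, Itô's formula \eqref{ITO SG 1}, Burkholder-Davis-Gundy and the bound $\lVert f^\eps\rVert_{L^2(0,T;\0)}^2\le N$ yield
\begin{equation*}
\EE\Bigl[\sup_{t\in[0,T]}\lVert u^{SG,\eps}_t\rVert_V^2+2\nu\int_0^T\lVert\nabla u^{SG,\eps}_s\rVert^2\,ds\Bigr]\le C
\end{equation*}
uniformly in $\eps$. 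Working next with the enstrophy identity \eqref{ITO SG 2}, the dissipation coefficient $2\nu/\eps$ stays bounded under \autoref{Second Grade Fluids assumption}; splitting $\tfrac{2\nu}{\eps}|\langle\operatorname{curl} u^{SG,\eps},q^{SG,\eps}\rangle|\le \tfrac{\nu}{\eps}(\lVert q^{SG,\eps}\rVert^2+\lVert\operatorname{curl} u^{SG,\eps}\rVert^2)$ and using the previous $V$-estimate and the uniform control on $\lVert x^\eps\rVert_*$ coming from $x^\eps\to x$ in $W$, one derives refined bounds that, combined with the equation, provide tightness of the laws of $u^{SG,\eps}$ in $C([0,T];H)$ through an Aubin-Lions-Simon argument. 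Passing to the limit in the weak formulation of \eqref{second grade with forcing}, using the a.s.-weak convergence of $f^\eps$ and the vanishing of $\sqrt{\eps}W$ in $C([0,T];H)$, one identifies any cluster point with the (unique) Euler solution $\bar u$; uniqueness in \autoref{thm well posed euler forcing} then promotes subsequential to full convergence.

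The hard part will be passing to the limit in the nonlinear term $\langle\operatorname{curl}(v^{SG,\eps})\times u^{SG,\eps},\phi\rangle$, where $v^{SG,\eps}=u^{SG,\eps}-\eps\Delta u^{SG,\eps}$ and $\phi$ is a smooth test function. Integration by parts rearranges the dangerous piece $-\eps\langle\operatorname{curl}(\Delta u^{SG,\eps})\times u^{SG,\eps},\phi\rangle$ into expressions involving $\eps\nabla u^{SG,\eps}$ tested against smooth functions of $u^{SG,\eps}$ and $\phi$; the uniform bound $\eps\lVert\nabla u^{SG,\eps}\rVert_{L^2}^2\le \lVert u^{SG,\eps}\rVert_V^2\le C$ is exactly what forces this contribution to vanish in the limit. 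This is precisely the point where the scaling $\nu=O(\eps)$ enters essentially, as in \cite{lopes2015approximation,luongo2022inviscid}. Finally, lower semicontinuity of $x\mapsto I^{SG}_x(v)$ follows by weak compactness of minimizing control sequences in $L^2(0,T;\0)$ together with the continuity of the skeleton map established in condition (1).
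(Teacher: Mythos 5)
Your overall scaffolding (verify the two conditions of \autoref{conditions weak convergence approach} and the lower semicontinuity of $u_0\mapsto I^{SG}_{u_0}(v)$, with Condition~1 and the semicontinuity handled exactly as in the Navier--Stokes section) matches the paper. The problem is your proof of Condition~2, which is where all the work lies, and there your compactness-plus-identification strategy has a genuine gap. The only uniform-in-$\eps$ bounds available from \eqref{ITO SG 1} and \eqref{ITO SG 2} under $\nu=O(\eps)$ are $\sup_t\lVert u^{SG,\eps}_t\rVert^2\le C$, $\eps\sup_t\lVert\nabla u^{SG,\eps}_t\rVert^2\le C$ and $\eps^3\sup_t\lVert u^{SG,\eps}_t\rVert_{H^3}^2\le C$ (the paper's \eqref{energy estimate second grade 1 }--\eqref{energy estimate second grade 2}); none of these gives a uniform bound in a space compactly embedded in $H$, because the enstrophy balance does not close under no-slip conditions (vorticity is created at the boundary), so no uniform $H^s$ bound with $s>0$ is available. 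Consequently an Aubin--Lions--Simon argument cannot deliver tightness in the \emph{strong} topology of $C([0,T];H)$, and with only weak-$*$ $L^\infty_tL^2_x$ compactness you cannot pass to the limit in $b(u^{SG,\eps},\phi,u^{SG,\eps})$: this is precisely the classical obstruction of the inviscid limit in bounded domains, and your argument is circular in that it presupposes the strong compactness it is meant to prove.

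A second, related error is your treatment of the singular term $-\eps\langle\operatorname{curl}(\Delta u^{SG,\eps})\times u^{SG,\eps},\phi\rangle$: after integration by parts this produces terms of size $\eps\lVert\nabla u^{SG,\eps}\rVert^2\lVert\phi\rVert_{W^{1,\infty}}$, and the bound $\eps\lVert\nabla u^{SG,\eps}\rVert^2\le C$ makes these \emph{bounded}, not vanishing; concluding that they go to zero confuses $O(1)$ with $o(1)$. The paper avoids both problems by never invoking compactness: it introduces the auxiliary Stokes-type process $z^{\eps}=\int_0^te^{\nu(I-\eps A)^{-1}A(t-s)}(I-\eps A)^{-1}f^{\eps}_s\,ds$ (\autoref{convergence forcing second grade}), a Kato boundary-layer corrector $v$ of width $\delta(\eps)$ with $\delta\to0$ and $\eps/\delta\to0$ satisfying \eqref{property bl corrector 2}, and runs a relative-energy estimate on $w^{\eps}=v^{\eps}-v^{E}$ in which the dangerous $\eps b(u^{\eps},\Delta u^{\eps},w^{\eps})$ contributions are rearranged (see \eqref{main proof second grade 12}--\eqref{main proof second grade 17}) so as to be \emph{absorbed} by the term $\eps\lVert\nabla v^{\eps}_t\rVert^2$ kept on the left-hand side of \eqref{main proof second grade 19} before applying Gr\"onwall; such absorption is only possible in the modulated-energy framework, not when testing against a fixed smooth $\phi$. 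To repair your proof you would need to replace the tightness/identification step by this (or an equivalent) quantitative relative-energy argument.
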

Lastly we want to consider the case of fluids with radial symmetry. In such case the inviscid limit in general holds without any assumptions on the behavior of the fluid in the boundary layer as observed in \cite{lopes2008vanishing}. Therefore, calling $B$ the open ball in $\mathbb{R}^2$, centered in $0$ with radius $1$, we introduce 
\begin{align*}
   \mathcal{H}^{RS}=\overline{\left\{\frac{x^{\perp}}{\lvert x\rvert }\bar{u}(\lvert x\rvert),\quad \bar{u}\in C_c^{\infty}(0,1)\right\}}^{D((-A)^{\gamma})}
\end{align*}
endowed with the $D((-A)^{\gamma})$ norm and 
\begin{align*}
   \mathcal{E}_0^{RS}=\mathcal{E}_0^{NS}\cap \left\{ u=\frac{x^{\perp}}{\lvert x\rvert }\bar{u}(\lvert x\rvert),\quad \bar{u}\in L^2(0,1)\right\}
\end{align*}
endowed with the $H^3$ norm. As above we need to introduce a particular forced Navier-Stokes systems: \begin{align}\label{Symmetry with forcing}
\begin{cases}
du^{RS,\epsilon}=(\epsilon \Delta u^{RS,\epsilon}-u^{RS,\epsilon}\cdot\nabla u^{RS,\epsilon}+\nabla p^{RS,\epsilon}+f)dt+\sqrt{\eps}dW^{RS}_t\\
\div u^{RS,\epsilon}=0\\
u^{RS,\epsilon}|_{\partial D}=0\\ 
u^{RS,\epsilon}(0)=u_0,
\end{cases}    
\end{align}
Now can introduce the assumptions in order to deal the case with radial symmetry and study the Large Deviation Principle in this framework:
\begin{hypothesis}\label{hypothesis symmetry}
$D=B$, $W^{RS}_t=\sum_{k\in K}\sigma_k W^k_t $ where
\begin{itemize}
    \item $K$ is a (possibly countable) set of indexes, $\gamma\geq 2$. 
    \item $\sigma_k\in \mathcal{H}^{RS}$ satisfying \begin{align*}
        \sum_{k\in K}\lVert \sigma_k\rVert_{D((-A)^{\gamma})}^2<+\infty.
    \end{align*}
    \item $\{W^k_t\}_{k\in K}$ is a sequence of real, independent Brownian motions adapted to $\mathcal{F}_t$.
\end{itemize}
\end{hypothesis}
We denote by $H_0^{RS}$ the RKHS associated to $W_t^{RS}$.\\
Since \autoref{thm well-posed ns forcing}, \autoref{euler forcing} continue to hold considering $u_0\in \mathcal{E}_0^{RS},\ f\in \mathcal{H}^{RS}$ and assuminh \autoref{hypothesis symmetry}, we can define the measurable maps $\mathcal{G}^{RS,\eps}$ and $\mathcal{G}^{RS,0}$ as above for  $\mathcal{G}^{NS,\eps}$ and $\mathcal{G}^{NS,0}$ considering $\mathcal{E}_0^{RS}$ instead of $\mathcal{E}_0^{NS}.$

\begin{theorem}\label{main thm LDP NS radial symmetry}
Assuming \autoref{hypothesis symmetry}, the solutions $\{u^{RS,\eps}=\mathcal{G}^{RS,\eps}(u_0,\sqrt{\eps}W^{RS}_\cdot)\}_{u_0\in \mathcal{E}_0^{RS}}$ satisfy the uniform Laplace principle with the rate function \begin{align*}
    I_{u_0}^{RS}(v)=\frac{1}{2}\operatorname{inf}_{f\in L^2(0,T;H_0):\ v=\mathcal{G}^{RS,0}(u_0,\int_0^\cdot f_s ds )}\int_0^T \lVert f_s\rVert_\0^2 ds 
\end{align*}
where $u_0\in \mathcal{E}_0^{RS},\ v\in C([0,T];H)$.
\end{theorem}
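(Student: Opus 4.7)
The plan is to apply Theorem \ref{weak convergence approach abstract thm} to the maps $\mathcal{G}^{RS,\eps}$ and $\mathcal{G}^{RS,0}$, so the entire proof reduces to verifying the two items of Hypothesis \ref{conditions weak convergence approach} together with the lower semicontinuity of $u_0 \mapsto I^{RS}_{u_0}(v)$. The key structural simplification is that for any velocity field of the form $u(x) = \bar u(|x|)\, x^\perp/|x|$, a direct computation gives $u\cdot \nabla u = -\bar u(|x|)^2\, x/|x|^2$, which is a pure radial gradient and is therefore annihilated by the Leray projector $P$. Hence the first step is to show that if $u_0 \in \mathcal{E}_0^{RS}$ and the control $f$ (and the noise, via \autoref{hypothesis symmetry}) take values in $\mathcal{H}^{RS}$, then the solutions of both \eqref{euler forcing} and \eqref{Symmetry with forcing} preserve the circularly symmetric ansatz. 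This is argued by rotational invariance: the Stokes operator, the Leray projector, the forcing, and the noise all commute with the action of $SO(2)$ on vector fields, so uniqueness (given by \autoref{thm well posed euler forcing} and \autoref{thm well-posed ns forcing}) forces the solution to equal any of its rotations, hence to be radially symmetric, hence to have vanishing convective term.

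Consequently, both the skeleton and the perturbed stochastic equations reduce to \emph{linear} problems in $H$:
\begin{align*}
    \partial_t \bar u = P f, \qquad d u^{RS,\eps} = \bigl(\eps A u^{RS,\eps} + P f \bigr)\, dt + \sqrt{\eps}\, dW^{RS}_t,
\end{align*}
so that $\mathcal{G}^{RS,0}(u_0, \int_0^\cdot f_s\, ds)_t = u_0 + \int_0^t P f_s\, ds$. To verify condition (1) of Hypothesis \ref{conditions weak convergence approach}, I would fix $K \subset \mathcal{E}_0^{RS}$ compact and $N<\infty$ and note that $\Gamma_{K,N}$ is the image of $K \times S^N$ under $(u_0, v) \mapsto u_0 + \int_0^\cdot P v_s\, ds$. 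Since $K$ is compact in $H$ and since the map $v \mapsto \int_0^\cdot P v_s\, ds$ is a compact operator from the weak topology on $S^N$ into $C([0,T];H)$ (by a standard Arzelà--Ascoli argument, using the Hilbert--Schmidt embedding $H_0 \hookrightarrow D((-A)^\gamma) \hookrightarrow H$ guaranteed by $\gamma \ge 2$), the set $\Gamma_{K,N}$ is compact in $\mathcal{E}$.

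For condition (2), assume $u_0^\eps \to u_0$ in $\mathcal{E}_0^{RS}$ and $u^\eps \to u$ in law in $S^N$. By Skorokhod representation I may assume $u^\eps \to u$ $\mathbb{P}$-a.s. in the weak topology of $S^N$. Writing the mild solution
\begin{align*}
    \mathcal{G}^{RS,\eps}\Bigl(u_0^\eps, \sqrt{\eps}W^{RS}_\cdot + \int_0^\cdot u^\eps_s\, ds\Bigr)_t = e^{\eps A t} u_0^\eps + \int_0^t e^{\eps A(t-s)} P u^\eps_s\, ds + \sqrt{\eps}\int_0^t e^{\eps A(t-s)}\, dW^{RS}_s,
\end{align*}
I would treat the three terms separately: the semigroup term converges to $u_0$ in $C([0,T];H)$ by strong continuity of $e^{\eps A t}$ at the identity and $u_0^\eps \to u_0$ in $H$; the stochastic convolution converges to $0$ in $C([0,T];H)$ in probability by the factorization method together with the bound $\mathbb{E}\lVert \sqrt{\eps}\int_0^\cdot e^{\eps A(\cdot-s)} dW^{RS}_s\rVert_{C([0,T];H)}^2 \le C\eps T \sum_k \lVert \sigma_k\rVert^2$; and the controlled drift converges to $\int_0^\cdot P u_s\, ds$ by combining weak convergence of $u^\eps$ in $L^2(0,T;H_0)$ with strong convergence of $e^{\eps A(t-\cdot)} \to \mathrm{Id}$ in the appropriate operator norm (which holds since the test functions in the duality pairing are finite sums of eigenfunctions of $A$ up to arbitrarily small error, and $S^N$-elements are bounded in $L^2_t H_0$).

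The main technical obstacle is the last step: one must push the weak limit through the composition of the vanishing-viscosity semigroup and the weakly convergent control. Because the nonlinearity is absent the analysis is essentially a linear stability question and goes through cleanly; had the convective term been present, this is precisely where a Kato-type condition would be needed. The lower semicontinuity of $u_0 \mapsto I^{RS}_{u_0}(v)$ follows from the continuity of $(u_0, f) \mapsto \mathcal{G}^{RS,0}(u_0, \int_0^\cdot f_s\, ds)$ from $\mathcal{E}_0^{RS} \times S^N$ (weak) to $\mathcal{E}$, which is evident from the explicit linear formula, and Theorem \ref{weak convergence approach abstract thm} then yields the uniform Laplace principle with rate function $I^{RS}_{u_0}$.
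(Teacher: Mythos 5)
Your proposal is correct and follows the same overall architecture as the paper: kill the convective term by circular symmetry, reduce both the skeleton and the perturbed stochastic equation to linear problems, and verify the two conditions of \autoref{conditions weak convergence approach} through the mild formulation, treating the semigroup term, the controlled drift and the stochastic convolution separately. Two of your sub-arguments differ from the paper's. First, you obtain preservation of the circularly symmetric ansatz by $SO(2)$-equivariance plus pathwise uniqueness, whereas the paper constructs the solution explicitly as $\bar u^\eps(|x|)x^\perp/|x|$ with $\bar u^\eps$ solving the auxiliary scalar equation \eqref{auxnse} for the operator $\tilde A$, and then verifies by direct integration by parts (\autoref{radial nse}) that this is the weak solution; your soft argument is legitimate but silently uses that an equivariant, divergence-free, square-integrable field on the disk is necessarily purely azimuthal (the radial component $a(r)\hat r$ is forced to be $c/r$, which is not in $L^2$ unless $c=0$), a step worth stating. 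Second, for Condition 1 you exploit the explicit affine formula $u_0+\int_0^\cdot f_s\,ds$ for the radial Euler flow, which is simpler than the paper's choice of reusing the general Yudovich-type compactness argument of \autoref{sec:Kato assumption}; both work, and yours is the more economical for this special geometry. The one thin spot is your handling of the controlled drift in Condition 2: $e^{\eps A(t-s)}$ does not converge to the identity in operator norm, so "strong convergence in the appropriate operator norm" is not quite the right mechanism for a bound uniform in $t$. The paper's cleaner route is to integrate by parts in $s$, writing $\int_0^t e^{\eps \tilde A(t-s)}(\bar f^\eps_s-\bar f_s)\,ds=(\bar F^\eps_t-\bar F_t)+\eps\int_0^t \tilde A e^{\eps\tilde A(t-s)}(\bar F^\eps_s-\bar F_s)\,ds$, using the uniform convergence of the primitives $\bar F^\eps\to\bar F$ in $C([0,T];L^2)$ together with the analytic-semigroup bound $\lVert \tilde A e^{\eps\tilde A r}\rVert\lesssim (\eps r)^{-1}$ in the smoothing form $\eps^{1/2} r^{-1/2}$ on $\mathcal H^1$; you should replace your operator-norm claim with an argument of this type.
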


\begin{remark}\label{indifference to forcing}
\autoref{main thm LDP NS}, \autoref{main thm LDP 2GF} and \autoref{main thm LDP NS radial symmetry} continue to hold also if we add a deterministic forcing term $g$ in $L^2(0,T;H_0)$ or $L^2(0,T;H_0^{RS})$ in equations \eqref{NS introduction}, \eqref{second grade introduction}, up to re-defining the maps $\G^\eps$ and $\G^0$ accordingly. Indeed the computations below can be easily adapted to this framework. Moreover, it is enough to assume the validity of \autoref{strong kato hp} for equation \eqref{NS introduction} without any forcing $g$ in order to prove the validity of \autoref{main thm LDP NS} also if we add the forcing term $g$.
\end{remark}

\begin{remark}\label{simplification functionals}
In the framework of \autoref{remark on noise 1}, 
$I_{u_0}^{NS}(v)$ reduces to \begin{align*}
    I_{u_0}^{NS}(v)=\frac{1}{2}\int_0^T \lVert (-A)^{\gamma+1/2+\delta} \left(\partial_s v_s-P(v_s\cdot\nabla v_s)\right)\rVert^2 ds.
\end{align*}
Similarly for the other functional.
\end{remark}
We conclude this section with few notations that will be adopted:
by $C$ we will denote several constant independent from $\eps$, $\nu$, perhaps changing value line by line. If we want to keep track of the dependence of $C$ from some parameter $\rho$ we will use the symbol $C(\rho)$. In order to simplify the notation we will denote Sobolev spaces by $W^{s,p}$, forgetting domain and range and use Einstein summation convention.

\section{Navier-Stokes}\label{sec: NS}
\subsection{Proof of \autoref{main thm LDP NS}.}\label{sec:Kato assumption}
\subsubsection{Condition 1}\label{subsubsec:cond1 Kato}
Let us fix $N>0,\ K$ a compact subset of $\mathcal{E}_0^{NS}$, we want to show that the set
\begin{align*}
 K_N=\{\mathcal{G}^{NS,0}(x_0,\int_0^\cdot f_s ds)\,\quad v\in S^N, x_0\in K\} \stackrel{c}{\hookrightarrow}\mathcal{E}.
\end{align*}
Therefore let us fix two sequences $\{x_{0}^n\}_{n\in \mathbb{N}}\subset K,\ \{f^n\}_{n\in \mathbb{N}}\subset S^N.$ Since $K$ is compact subset of $\mathcal{E}_0^{NS},\ \lVert v^n\rVert_{L^2(0,T;H_0)}^2\leq N $ we can find a subsequence $\{n_k\}_{k\in\mathbb{N}},\ x\in K,\ f\in S^N$ such that $x_0^{n_k}\rightarrow x_0$ in $\mathcal{E}^{NS}_0,\ f^{n_k}\ \rightharpoonup f  $ in $L^2(0,T;H_0)$. Let $u^{n_k}:=\mathcal{G}^{NS,0}(x_0^{n_k},\int_0^\cdot f^{n_k}_s ds )$  (resp. $u:=\mathcal{G}^{NS,0}(x_0,\int_0^\cdot f_s ds )$). According to \autoref{thm well posed euler forcing}, $u^{n_k}$ (resp $u$) is the unique regular weak solution of \eqref{euler forcing} with initial condition $x_0^{n_k}$ (resp. $x_0$) and forcing term $f^{n_k}$ (resp. $f$). Our goal is to show that $u^{n_k}\rightarrow u$ in $\mathcal{E}$. Fix $\theta>0$ arbitrarily small and define $F^{n_k}=\int_0^\cdot f^{n_k}_s ds,\ F=\int_0^\cdot f_s ds$. By hypothesis \autoref{hypothesis noise}, $H_0\hookrightarrow D((-A)^2)$. This implies, see for example \cite[Proposition 26]{flandoli2023stochastic} that \begin{align}\label{convergence integral forcing cond1}
    F^{n_k}\rightarrow F \quad\textit{in }C([0,T];D((-A)^{2-\theta})).
\end{align} Obviously
\begin{align}\label{uniform bound integral forcing cond1}
\operatorname{sup}_{k\geq 1}\lVert F^{n_k}\rVert_{C([0,T];D((-A)^2))}+\lVert F\rVert_{C([0,T];D((-A)^2))}\leq C(N).  
\end{align}
Lastly, since $x_0^{n_k}\rightarrow x_0$ in $\mathcal{E}_0^{NS}$, from \eqref{Energy Euler } we can find a constant $C=C(N,u_0)$ only depending from $N$ and $\lVert x\rVert_{\mathcal{E}_0^{NS}}$ such that 
\begin{align}\label{uniform energy W24 euler}
    \operatorname{sup}_{k\geq 1}\lVert {u}^{n_k}\rVert_{C([0,T];W^{2,4})}+\lVert {u}\rVert_{C([0,T];W^{2,4})}\leq C(\lVert x_0\rVert_{W^{2,4}},N).
\end{align}
We introduce 
\begin{align*}
    & v^{n_k}_t=u^{n_k}_t-F^{n_k}_t,\quad v_t=u_t-F_t. 
\end{align*}
By triangle inequality $v^{n_k},\ v$ satisfy equation \eqref{uniform energy W24 euler}, too.
Since $F^{n_k}\rightarrow F$ in $\mathcal{E}$, it is enough to show that $v^{n_k}\rightarrow v$ in $\mathcal{E}$ in order to prove the validity of Condition 1 in \autoref{conditions weak convergence approach}.
This is the aim of \autoref{lemma convergence euler} below. We will follow the idea introduced in \cite{YUDOVICH19631407} to show uniqueness of the solutions with bounded vorticity of the Euler equations. However, in order to prove the continuous dependence from the data we exploit the higher regularity and the uniform bounds guaranteed by relation \eqref{uniform energy W24 euler}.
\begin{lemma}\label{lemma convergence euler}
    $v^{n_k}\rightarrow v$ in $C([0,T];H)$.
\end{lemma}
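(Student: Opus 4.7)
The plan is to perform a standard $L^2$ energy estimate on the difference $w^{n_k} := v^{n_k} - v$ and close by Gronwall. Since $\partial_t v^{n_k} = \partial_t u^{n_k} - f^{n_k}$ and the Euler equation \eqref{euler forcing} gives $\partial_t u^{n_k} = -P(u^{n_k}\cdot\nabla u^{n_k}) + f^{n_k}$ (and similarly for $u$), the forcings cancel and $w^{n_k}$ solves
\begin{equation*}
\partial_t w^{n_k} + P\bigl(u^{n_k}\cdot\nabla u^{n_k} - u\cdot\nabla u\bigr) = 0, \qquad w^{n_k}(0) = x_0^{n_k} - x_0.
\end{equation*}
By \eqref{uniform energy W24 euler} both $u^{n_k}$ and $u$ lie in $C([0,T];W^{2,4})$, so the nonlinear term belongs to $L^\infty(0,T;H)$ and the equation holds in $H$ pointwise in time, legitimising the pairing with $w^{n_k}$. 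Writing $g^{n_k} := F^{n_k} - F$, so that $u^{n_k} - u = w^{n_k} + g^{n_k}$, I would decompose
\begin{equation*}
u^{n_k}\cdot\nabla u^{n_k} - u\cdot\nabla u = u^{n_k}\cdot\nabla w^{n_k} + u^{n_k}\cdot\nabla g^{n_k} + (w^{n_k}+g^{n_k})\cdot\nabla u.
\end{equation*}

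Taking the inner product with $w^{n_k}$, the term $\langle u^{n_k}\cdot\nabla w^{n_k}, w^{n_k}\rangle$ vanishes by integration by parts since $u^{n_k}\in H$ (so $\div u^{n_k}=0$ and $u^{n_k}\cdot n|_{\partial D}=0$). The remaining three terms are estimated by Hölder and Young to give
\begin{equation*}
\tfrac{1}{2}\tfrac{d}{dt}\|w^{n_k}\|^{2} \le \bigl(\tfrac12 + \|\nabla u\|_{L^\infty}\bigr)\|w^{n_k}\|^{2} + \tfrac12\bigl(\|u^{n_k}\|^{2}\|\nabla g^{n_k}\|_{L^\infty}^{2} + \|\nabla u\|_{L^\infty}^{2}\|g^{n_k}\|^{2}\bigr).
\end{equation*}
The two-dimensional Sobolev embedding $W^{2,4}\hookrightarrow W^{1,\infty}$ together with \eqref{uniform energy W24 euler} bounds $\|\nabla u\|_{L^\infty}$ and $\|u^{n_k}\|$ by a constant depending only on $\|x_0\|_{\mathcal{E}_0^{NS}}$ and $N$.

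To conclude I need the forcing remainder $\|\nabla g^{n_k}\|_{L^\infty} + \|g^{n_k}\|$ to vanish uniformly in $t$. This is where \autoref{hypothesis noise} with $\gamma\ge 2$ enters: from $H_0\hookrightarrow D((-A)^\gamma)$ together with \eqref{convergence integral forcing cond1} we obtain $g^{n_k}\to 0$ in $C([0,T]; D((-A)^{2-\theta}))$, and Sobolev embedding on the smooth planar domain $D$ gives $D((-A)^{2-\theta})\hookrightarrow W^{1,\infty}$ for $\theta$ small enough, so $\sup_{t\in[0,T]}\|g^{n_k}(t)\|_{W^{1,\infty}}\to 0$. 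Combined with $\|w^{n_k}(0)\|\to 0$ (from $x_0^{n_k}\to x_0$ in $\mathcal{E}_0^{NS}\hookrightarrow H$), Gronwall's lemma yields $\sup_{t\in[0,T]}\|w^{n_k}(t)\|^{2}\to 0$, which is the claim.

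The main subtlety is the estimate of the cross term $\langle u^{n_k}\cdot\nabla g^{n_k}, w^{n_k}\rangle$: one cannot relocate the derivative onto $g^{n_k}$ via integration by parts, so the strong convergence of $g^{n_k}$ must be in a topology controlling $\nabla g^{n_k}$ in $L^\infty$, which is precisely what the regularity assumption $\gamma\ge 2$ on the noise provides. The full Yudovich log-Lipschitz machinery is not needed here, because \eqref{uniform energy W24 euler} furnishes genuine Lipschitz control on $u$, reducing the entire argument to a linear Gronwall estimate.
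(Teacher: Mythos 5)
Your proof is correct, but it takes a genuinely different route from the paper. The paper works at the level of vorticity: it writes the transport equations for $h^{n_k}=\operatorname{curl}v^{n_k}$, introduces the stream functions $\psi^{n_k}$ solving $-\Delta\psi^{n_k}=h^{n_k}$ with Dirichlet boundary condition, and tests the equation for the difference of vorticities against the difference of stream functions $\alpha^{n_k}$, recovering $\|\nabla\alpha^{n_k}\|=\|v^{n_k}-v\|$ after elliptic regularity and a five-term bookkeeping $I_1,\dots,I_5$ (this is the Yudovich scheme, simplified by the $W^{2,4}$ bounds of \eqref{uniform energy W24 euler}). You instead run the energy estimate directly on the velocity difference $w^{n_k}$: the transport term $\langle u^{n_k}\cdot\nabla w^{n_k},w^{n_k}\rangle$ dies by incompressibility and $u^{n_k}\cdot n|_{\partial D}=0$, and everything else is absorbed using $\|\nabla u\|_{L^\infty}$ (from $W^{2,4}\hookrightarrow W^{1,\infty}$ in 2D) and $\|g^{n_k}\|_{W^{1,\infty}}\to 0$ (from \eqref{convergence integral forcing cond1} with $\gamma\ge 2$). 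Both arguments rest on exactly the same essential input --- the uniform Lipschitz control of the limit flow furnished by \eqref{uniform energy W24 euler} --- so your observation that the full Yudovich log-Lipschitz/Osgood machinery is superfluous here is accurate, and your version is shorter and avoids the detour through elliptic regularity for the stream function. Two cosmetic points: your Young inequalities actually produce a coefficient $1+\|\nabla u\|_{L^\infty}$ rather than $\tfrac12+\|\nabla u\|_{L^\infty}$ in front of $\|w^{n_k}\|^2$ (harmless for Gr\"onwall), and your claim that one \emph{cannot} integrate by parts in the cross term $\langle u^{n_k}\cdot\nabla g^{n_k},w^{n_k}\rangle$ is not quite right --- one could, and then use the uniform $H^1$ bound on $w^{n_k}$ together with $\|g^{n_k}\|_{L^2}\to 0$ --- but the way you handle it is perfectly valid. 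Finally, note that the paper's proof ends with the quantitative estimate \eqref{final convergence euler}, which is reused later to get weak-to-strong continuity of $\mathcal{G}^{NS,0}$ in the lower semicontinuity argument; your Gr\"onwall bound yields an entirely analogous quantitative estimate, so nothing downstream is lost.
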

\begin{proof}
Let 
\begin{align*}
    & \zeta^{n_k}=\operatorname{curl}u^{n_k}_t,\quad \zeta_t=\operatorname{curl}u_t,\\
    & \phi^{n_k}_t=\operatorname{curl}F^{n_k}_t,\quad \phi_t=\operatorname{curl}F_t,\\
 & h^{n_k}_t=\operatorname{curl}v^{n_k}_t=\zeta^{n_k}_t-\phi^{n_k}_t,\quad h_t=\operatorname{curl}v_t=\zeta_t-\phi_t.   
\end{align*} $h^{n_k}_t$ (resp. $h_t$) is a weak solution of the vorticity equation
\begin{align}\label{equation vorticty}
    \begin{cases}
        \partial_t h^{n_k} + u^{n_k}\cdot \nabla (h^{n_k} + \phi^{n_k})=0\\
        h^{n_k}_0=\operatorname{curl}x_0^{n_k} 
    \end{cases}
     \quad \left( \text{ resp. }
     \begin{cases}
        \partial_t h + u\cdot \nabla (h + \phi)=0\\
        h_0=\operatorname{curl}x_0
    \end{cases}\right).
 \end{align}
Thanks to \eqref{convergence integral forcing cond1}, \eqref{uniform bound integral forcing cond1}, \eqref{uniform energy W24 euler} $h^{n_k},\ h, \  \phi^{n_k},\ \phi$ satisfy
\begin{align}
    & \phi^{n_k}\rightarrow \phi \quad in \  C([0,T];H^{3-\theta}) \label{convergence forcing vorticity},\\
    & \operatorname{sup}_{k\geq 1} \lVert \phi^{n_k}\rVert_{C([0,T];H^3)}+\lVert \phi\rVert_{C([0,T];H^3)}\leq C(N) \label{uniform forcing vorticity},\\
    & \operatorname{sup}_{k\geq 1}\lVert h^{n_k}\rVert_{C([0,T];W^{1,4})}+\lVert h\rVert_{C([0,T];W^{1,4})}\leq C(\lVert x_0\rVert_{W^{2,4}},N).\label{uniform vorticity}
\end{align}
We need to introduce the stream function $\psi^{n_k}_t$ (resp. $\psi_t$) which is the weak solution of \begin{align}\label{elliptic equation stream function}
    \begin{cases}
        -\Delta \psi^{n_k}_t=h^{n_k}_t\\
        \psi^{n_k}_t|_{\partial D}=0
    \end{cases}
    \quad \left( \text{ resp. }
     \begin{cases}
         -\Delta \psi_t=h_t\\
        \psi_t|_{\partial D}=0
    \end{cases}\right).
\end{align}
By standard elliptic regularity theory, see for example \cite{ambrosio2019lectures}, and the uniform bound \eqref{uniform vorticity}, it holds
\begin{align}
 & \operatorname{sup}_{k\geq 1}\lVert \psi^{n_k}\rVert_{C([0,T];W^{3,4})}+\lVert \psi\rVert_{C([0,T];W^{3,4})}\leq C(\lVert x_0\rVert_{ W^{2,4}},N).\label{uniform stream}
\end{align}
Lastly, introducing 
\begin{align*}
    \alpha^{n_k}_t=\psi^{n_k}_t-\psi_t,\quad g^{n_k}_t=\phi^{n_k}_t-\phi_t,\quad G^{n_k}_t=F^{n_k}_t-F_t,
\end{align*}
It is well-known that $v^{n_k}=-\nabla^{\perp}\psi^{n_k},\ v=-\nabla^{\perp}\psi,$ see for example \cite{marchioro2012mathematical}. With these notations in mind, thanks to equations \eqref{equation vorticty} and \eqref{elliptic equation stream function}, $-\Delta \alpha^{n_k}$ solves in a weak sense 
\begin{align}\label{equation difference solution}
    \partial_t (-\Delta \alpha^{n_k}) + \Big[(-\nabla^\perp \psi^{n_k} + F^{n_k})\cdot \nabla\Big]\Big(-\Delta \alpha^{n_k} + g^{n_k} \Big) 
    +\Big[-\nabla^\perp \alpha^{n_k} + G^{n_k} \cdot \nabla \Big](-\Delta \psi + \phi)=0.
\end{align}
Therefore, arguing as in \cite[Theorem 3.1]{YUDOVICH19631407}, we use $\alpha^{n_k}$ itself as a test function in \eqref{equation difference solution}, obtaining 
\begin{align}\label{energy estimate differences euler 1}
\frac{1}{2}\lVert \nabla \alpha^{n_k}_t\rVert^2 &= \frac{1}{2}\lVert \nabla \alpha^{n_k}_0\rVert^2 + \int_0^t\int_D \Big((-\nabla^\perp \psi_s^{n_k} \cdot\nabla \alpha_s^{n_k})\big(-\Delta \alpha_s^{n_k} + g_s^{n_k}\big) \notag\\ & +F^{n_k}_s\cdot \nabla \alpha^{n_k}_s (-\Delta \alpha^{n_k}_s + g^{n_k}_s) + \big(-\Delta \psi_s + \phi_s\big)\big(G^{n_k}_s \cdot \nabla \alpha^{n_k}_s\big)\Big) dx ds\notag \\ & =\frac{1}{2}\lVert \nabla \alpha^{n_k}_0\rVert^2+I_1(t)+I_2(t)+I_3(t)+I_4(t)+I_5(t),
\end{align}
where \begin{align*}
    I_1(t)&=\int_0^t\int_D \nabla^\perp \psi_s^{n_k} \cdot\nabla \alpha_s^{n_k}\Delta \alpha_s^{n_k} dx ds,\\ 
    I_2(t)&=-\int_0^t\int_D \nabla^\perp \psi_s^{n_k} \cdot\nabla \alpha_s^{n_k}g_s^{n_k} dx ds,\\
    I_3(t)&=-\int_0^t\int_D F_s^{n_k} \cdot\nabla \alpha_s^{n_k}\Delta \alpha_s^{n_k} dx ds,\\
    I_4(t)&=\int_0^t\int_D F_s^{n_k} \cdot\nabla \alpha_s^{n_k}g_s^{n_k} dx ds,\\
    I_5(t)&=\int_0^t \int_D (-\Delta \psi_s + \phi_s\big)\big(G^{n_k}_s \cdot \nabla \alpha^{n_k}_s\big) dx ds
\end{align*}
Therefore we need to understand the behavior of $I_1(t)$, $I_2(t),$ $I_3(t)$, $I_4(t)$, $I_5(t)$ in equation \eqref{energy estimate differences euler 1}.
$I_1(t)$ can be estimate easily integrating by parts, thanks to the uniform bound \eqref{uniform energy W24 euler} and H\"older's inequality. Indeed it holds:
\begin{align*}
    -\int_D v^{n_k}_i \partial_i \alpha^{n_k} \partial_{j,j}\alpha^{n_k} dx&=-\int_D \partial_j v^{n_k}_i \partial_i\alpha^{n_k} \partial_j \alpha^{n_k} dx -\int_D v^{n_k}_i \partial_{i,j}\alpha^{n_k} \partial_j \alpha^{n_k} dx\\ & \leq \lVert v^{n_k}\rVert_{W^{1,\infty}}\lVert \nabla \alpha^{n_k}\rVert^2-\frac{1}{2}\int_D v^{n_k}_i \partial_i \lvert \partial_j \alpha^{n_k}\rvert^2 dx\\ & \leq C(\lVert x_0\rVert_{W^{2,4}},N)\lVert\nabla \alpha^{n_k}\rVert^2.
\end{align*}
Therefore \begin{align}\label{estimate I_1 euler}
    I_1(t)\leq  C(\lVert x_0\rVert_{W^{2,4}},N)\int_0^t \lVert \nabla\alpha^{n_k}_s\rVert^2 ds.
\end{align}
$I_3$ can be estimated similarly integrating by parts, thanks to the uniform bound \eqref{uniform bound integral forcing cond1} and H\"older's inequality:
\begin{align*}
    \int_D F^{n_k}_i\partial_i \alpha^{n_k}\partial_{j,j}\alpha^{n_k}dx&=-\int_D \partial_j F^{n_k}_i\partial_i \alpha^{n_k}\partial_{j}\alpha^{n_k}dx-\int_D  F^{n_k}_i\partial_{i,j} \alpha^{n_k}\partial_{j}\alpha^{n_k}dx\\ & \leq \lVert F^{n_k}\rVert_{W^{1,\infty}}\lVert \nabla\alpha^{n_k}\rVert^2-\frac{1}{2}\int_D F^{n_k}_i \partial_i \lvert \partial_j\alpha^{n_k}\rvert^2 dx\\ & \leq C(N)\lVert \nabla\alpha^{n_k}\rVert^2.
\end{align*}
Therefore \begin{align}\label{estimate I_3 euler}
    I_3(t)\leq C(N)\int_0^t\lVert \nabla \alpha^{n_k}_s\rVert^2 ds.
\end{align}
$I_2(t)$, $I_4(t)$ and $I_5(t)$ can be bounded easily by H\"older and Young inequalities and the uniform bounds \eqref{uniform bound integral forcing cond1}, \eqref{uniform energy W24 euler}, \eqref{uniform forcing vorticity}, \eqref{uniform vorticity}, \eqref{uniform stream}. Indeed it holds:
\begin{align}\label{estimate I_245 euler}
    I_2(t)+I_4(t)+I_5(t) &\leq \int_0^t \left(\lVert v_s^{n_k}\rVert_{L^{\infty}}+\lVert F^{n_k}_s\rVert_{L^{\infty}}\right)\lVert \nabla \alpha^{n_k}_s\rVert\lVert g^{n_k}_s\rVert +\left(\lVert \phi_s\rVert_{L^{\infty}}+\lVert h_s\rVert_{L^{\infty}}\right)\lVert \nabla \alpha^{n_k}_s\rVert\lVert G^{n_k}_s\rVert\notag\\ & \leq \int_0^t \lVert \nabla \alpha^{n_k}_s\rVert^2 ds+C\left(\lVert v^{n_k}\rVert_{C([0,T];W^{1,4})}+\lVert F^{n_k}\rVert_{C([0,T];W^{1,4})}\right)^2\lVert g^{n_k}\rVert_{C([0,T];L^2)}^2\notag\\ & +
    C\left(\lVert \phi\rVert_{C([0,T];W^{1,4})}+\lVert h\rVert_{C([0,T];W^{1,4})}\right)^2\lVert G^{n_k}\rVert_{C([0,T];L^2)}^2\notag\\ & \leq \int_0^t \lVert \nabla \alpha^{n_k}_s\rVert^2 ds+C(\lVert x_0\rVert_{W^{2,4}},N)\left(\lVert g^{n_k}\rVert_{C([0,T];L^2)}^2+\lVert G^{n_k}\rVert_{C([0,T];L^2)}^2\right).
\end{align}
Combining relations \eqref{estimate I_1 euler}, \eqref{estimate I_3 euler} and \eqref{estimate I_245 euler} we get
\begin{align*}
    \frac{1}{2}\lVert \nabla\alpha^{n_k}_t\rVert^2\leq \frac{1}{2}\lVert \nabla\alpha^{n_k}_0\rVert^2+C(\lVert x_0\rVert_{W^{2,4}},N)\int_0^t  \lVert \nabla\alpha^{n_k}_s\rVert^2 ds+C(\lVert x_0\rVert_{W^{2,4}},N)\left(\lVert g^{n_k}\rVert_{C([0,T];L^2)}^2+\lVert G^{n_k}\rVert_{C([0,T];L^2)}^2\right),
\end{align*}
which implies, by Gr\"onwall's Lemma,
\begin{align}\label{final convergence euler}
    \lVert v^{n_k}-v\rVert_{C([0,T];H)}\leq C(\lVert x_0\rVert_{W^{2,4}},N)\left(\lVert x^{n_k}_0-x_0\rVert+\lVert g^{n_k}\rVert_{C([0,T];L^2)}+\lVert G^{n_k}\rVert_{C([0,T];L^2)} \right).
\end{align}
Thanks to our assumptions the thesis follows.
\end{proof}

\subsubsection{Condition 2}\label{subsubsec:cond2 Kato}
Fix $N>0$, let $\tilde{f}^{\eps},\Tilde{f}\in \mathcal{P}_2^N,\ u_0^{\eps}, u_0\in\mathcal{E}_0^{NS}$ such that $\Tilde{f}^{\eps}\rightarrow_{\mathcal{L}}\Tilde{f}$ weakly in $L^2(0,T;H_0),\ u^{\eps}_0\rightarrow u_0$ in $\mathcal{E}_0^{NS}$. We will show that for each sequence $\eps_n\rightarrow 0$, $\mathcal{G}^{\eps_n,NS}\big(u_0^{\eps_n}, {\eps_n}^{1/2} W + \int_0^{\cdot} \tilde f^{\eps_n}_s ds\big)$ converges in law to $\mathcal{G}^{0,NS}(u_0,\int_0^\cdot \tilde f_s ds)$ in the topology of $\E$. This implies the validity of the second condition in \autoref{conditions weak convergence approach}. In order to simplify the notation, we will consider $\eps>0$ in the following dropping the subscript $\eps_n$, having in mind it is a countable family. Since $S^N$ is a Polish space, by Skorokhod's representation theorem we can introduce a further filtered probability space $(\Tilde{\Omega},\Tilde{\mathcal{F}},\Tilde{\mathcal{F}}_t,\Tilde{\mathbb{P}})$ and random variables $f^{\eps}, W^{\eps},f$ such that $(f^{\eps},W^{\eps})$ has the same joint law of $(\Tilde{f}^{\eps},W)$, $f$ has the same law of $\Tilde{f}$ and $f^{\eps}\rightarrow_{\Tilde{\mathbb{P}}-a.s.} f$ in $L^2(0,T;H_0)$, see for example \cite{flandoli2023stochastic} for details. Thanks to \autoref{thm well-posed ns forcing} for each $\epsilon$ we can define $u^{\epsilon}$ as the unique solution of \eqref{NS with forcing} with forcing term $f^{\eps}$, initial condition $u_0^{\eps}$ and Brownian forcing term $W^{\eps}$. The family $\{u^\eps\}_{\eps>0}$ satisfy \autoref{strong kato hp}. Moreover, by \autoref{thm well posed euler forcing} we can define ${u}^E$ as the unique regular solution of \eqref{euler forcing} with forcing term $f$ and initial condition $u_0.$ We will show that $u_0^{\eps}$ converges to ${u}^E$ in probability in $C([0,T];H)$. This implies the validity of Condition 2.\\
Before starting with the computation we recall some facts. In the following, with some abuse of notation, we will simply use $\mathbb{P},\ \mathbb{E}$ instead of $\Tilde{\mathbb{P}},\ \Tilde{\mathbb{E}}.$ Fix $\theta>0$ arbitrarily small and define $F^{\eps}_t=\int_0^t f^{\eps}_s ds,\ F_t=\int_0^t f_s ds$. By hypothesis \autoref{hypothesis noise}, $H_0\hookrightarrow D((-A)^2)$. This implies, see for example \cite[Proposition 26]{flandoli2023stochastic} that \begin{align}\label{convergence integral forcing NS}
    F^{\eps}\rightarrow_{{\mathbb{P}}-a.s.} F \quad\textit{in }C([0,T];D((-A)^{2-\theta})).
\end{align} Obviously
\begin{align}\label{uniform bound integral forcing NS}
\operatorname{sup}_{\eps>0}\lVert F^{\eps}\rVert_{C([0,T];D((-A)^2))}+\lVert F\rVert_{C([0,T];D((-A)^2))}\leq C(N)\quad {\mathbb{P}}-a.s.  
\end{align}
Starting from \eqref{ito NS}, Burkholder-Davis-Gundy inequality, Gr\"onwall's lemma and the convergence of $u_0^{\eps}$ to $u_0$ imply
\begin{align}\label{energy estimate NS}
    \operatorname{sup}_{\eps>0}\left\{\mathbb{E}\left[\operatorname{sup}_{t\in [0,T]}\lVert u^{\eps}_t\rVert_H^2\right]+\eps\mathbb{E}\left[\int_0^T \lVert \nabla u^{\eps}_s\rVert^2 ds  \right]\right\}\leq C(N,\lVert u_0\rVert).
\end{align}
In order to show the convergence of $u^{\eps}$ to $u^E$ we will introduce $z^{\eps}=\int_0^t e^{\eps A (t-s)}f^{\eps}_sds,\ v^{\eps}=u^{\eps}-z^{\eps},\ v^E=u^E-F$ and show separately the convergence of $z^{\eps}$ to $F$ and of $v^\eps$ to $v^E.$ The Strong Kato Condition \autoref{strong kato hp} will play a role only in the convergence of $v^\eps$ to $v^E.$ 
We start with the convergence of $z^{\eps}$ to $F$.
\begin{lemma}\label{convergence forcing}
    For each $\theta>0$, $z^{\eps}_t\rightarrow F$ in $C([0,T];D((-A)^{\gamma-1/2-\theta}))\quad \mathbb{P}-a.s.$ and in $L^2(\Omega,\mathbb{P})$.
\end{lemma}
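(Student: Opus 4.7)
The plan is to decompose
\[
z^\eps_t - F_t = (F^\eps_t - F_t) + R^\eps_t, \qquad R^\eps_t := \int_0^t\bigl(e^{\eps A(t-s)} - I\bigr)f^\eps_s\,ds,
\]
and control each piece separately in the $C([0,T];D((-A)^{\gamma-1/2-\theta}))$ topology. The first piece is essentially a higher-regularity version of \eqref{convergence integral forcing NS}; the second piece is where the analytic regularization provided by $e^{\eps A \cdot}$ must be traded against the viscosity prefactor $\eps$.

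For the $F^\eps - F$ piece, since \autoref{hypothesis noise} together with \autoref{remark on noise 1} yields the embedding $H_0 \hookrightarrow D((-A)^\gamma)$, the family $(F^\eps)_\eps$ is uniformly bounded in $C([0,T];D((-A)^\gamma))$ by a deterministic constant depending only on $T$ and $N$. The same interpolation/compactness argument underlying \eqref{convergence integral forcing NS} then upgrades the $\mathbb{P}$-a.s.\ convergence to $C([0,T];D((-A)^{\gamma-\theta}))$ for any $\theta>0$, which embeds continuously into $C([0,T];D((-A)^{\gamma-1/2-\theta}))$.

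For $R^\eps_t$ I would invoke the standard analytic semigroup inequality, valid for $\alpha \in (0,1]$:
\[
\bigl\|(-A)^{\beta}(e^{\eps A \tau} - I)x\bigr\| \le C (\eps\tau)^\alpha \bigl\|(-A)^{\alpha+\beta}x\bigr\|.
\]
Choosing $\beta = \gamma-1/2-\theta$ and $\alpha = 1/2$, one has $\alpha+\beta = \gamma-\theta \le \gamma$, hence $\|(-A)^{\alpha+\beta} f^\eps_s\| \le C\|f^\eps_s\|_{H_0}$; Cauchy--Schwarz in $s$ then gives
\[
\sup_{t\in[0,T]} \|R^\eps_t\|_{D((-A)^{\gamma-1/2-\theta})} \le C\,\eps^{1/2}\Bigl(\int_0^T(T-s)\,ds\Bigr)^{1/2}\|f^\eps\|_{L^2(0,T;H_0)} \le C(T)\,\eps^{1/2}\sqrt{N},
\]
a deterministic bound that vanishes as $\eps \to 0$. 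Combining the two estimates yields the $\mathbb{P}$-a.s.\ convergence $z^\eps \to F$ in $C([0,T];D((-A)^{\gamma-1/2-\theta}))$.

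The $L^2(\Omega,\mathbb{P})$ convergence then follows by dominated convergence, since $\|z^\eps - F\|_{C([0,T];D((-A)^{\gamma-1/2-\theta}))}$ is dominated by a deterministic constant depending only on $T$ and $N$ (the $R^\eps$ contribution is already purely deterministic, and $F^\eps$, $F$ lie $\mathbb{P}$-a.s.\ in a bounded subset of $C([0,T];D((-A)^\gamma))$). There is no genuine obstacle here beyond careful regularity bookkeeping: the half derivative of smoothing afforded by the semigroup estimate is precisely what forces the target space to be $D((-A)^{\gamma-1/2-\theta})$ rather than $D((-A)^{\gamma-\theta})$.
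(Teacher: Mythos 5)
Your proof is correct and follows essentially the same route as the paper: split $z^\eps_t = F^\eps_t + (z^\eps_t - F^\eps_t)$, handle the first piece via the a.s.\ convergence $F^\eps\to F$ coming from \autoref{hypothesis noise} and the uniform bound in $C([0,T];D((-A)^\gamma))$, and kill the remainder with an analytic-semigroup smoothing estimate that trades half a derivative for a factor $\eps^{1/2}$, uniformly in $\omega$, after which dominated convergence gives the $L^2(\Omega,\mathbb{P})$ statement. The only cosmetic difference is that you use the packaged inequality $\|( -A)^\beta(e^{\eps A\tau}-I)x\|\le C(\eps\tau)^{1/2}\|(-A)^{\beta+1/2}x\|$ where the paper unpacks it as $\eps\int A e^{\eps A\cdot}$ and gains $\eps^{1/2+\theta}$ instead of $\eps^{1/2}$; both vanish, so the arguments are equivalent.
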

\begin{proof}
$z^{\eps}$ can be rewritten as 
\begin{align*}
    z^{\eps}_t=\int_0^t f^{\eps}_sds+\eps\int_0^t Ae^{\eps(t-s)A}f^{\eps}_s ds&=I_1+I_2.
\end{align*}
$I_1\rightarrow F\in C([0,T];D((-A)^{\gamma-1/2-\theta})$ $\mathbb{P}-a.s.$ thanks to \eqref{convergence integral forcing NS}. Moreover, since \eqref{uniform bound integral forcing NS} holds, previous convergence holds also in $L^2(\Omega,\mathbb{P})$ by Lebesgue theorem. It remains to show that $I_2\rightarrow 0 $ properly. The $\mathbb{P}-a.s.$ convergence can be obtained as follows
\begin{align*}
    \eps \operatorname{sup}_{t\in [0,T]}\left\lVert \int_0^t A e^{\eps(t-s)A}f^{\eps}_sds \right\rVert_{D((-A)^{\gamma-1/2-\theta}}&\leq  \eps  \operatorname{sup}_{t\in [0,T]}\int_0^t \left\lVert A^{1/2+\gamma-\theta} e^{\eps(t-s)A}f^{\eps}_sds \right\rVert\\ & \leq
    \eps  \operatorname{sup}_{t\in [0,T]}\int_0^t \left\lVert A^{1/2-\theta} e^{\eps(t-s)A}A^{\gamma} f^{\eps}_s \right\rVert ds\\ & \leq 
    \eps^{1/2+\theta}\operatorname{sup}_{t\in [0,T]}\int_0^t \frac{1}{(t-s)^{1/2-\theta}}\lVert f^{\eps}_s\rVert_{D(-A)^{\gamma}}ds\\ & \leq C(N)\eps^{1/2+\theta}\rightarrow 0\quad \mathbb{P}-a.s.
\end{align*}
Since previous bound is uniform in $\omega\in \Omega$ the convergence holds also in  $L^2(\Omega,\mathbb{P})$ and the thesis follows.
\end{proof}
In order to prove the convergence of $v^{\eps}$ to $v^E$ we observe that they solve in a sense analogous to \autoref{weak solution NS forcing}, \autoref{weak euler}
 \begin{align}\label{auxiliary eps}
       dv^{\eps}+ P\left(\left(v^{\eps}+z^{\eps}\right)\cdot\nabla\left(v^{\eps}+z^{\eps}\right)\right)dt + \eps Av^{\eps} dt  = \sqrt{\eps}dW_t,
\end{align}
\begin{align}\label{auxiliary 0}
     \partial_t v^{E} + P(u^E\cdot\nabla u^E)   =0.
\end{align} 
By triangle inequality and the uniform bound guaranteed by \autoref{convergence forcing}, 
 estimates analogous to \eqref{energy estimate NS}, \eqref{Energy Euler } hold for $v^{\eps}$ and $v^E$, too.
We observe that, thanks to the regularity of $u^E$ guaranteed by \autoref{thm well posed euler forcing}
\begin{align}\label{energy derivative time vE}
    \lVert \partial_t v^{E}\rVert_{C([0,T];L^{\infty}(D)))}& \lesssim  \lVert \partial_t v^{E}\rVert_{C([0,T];W^{1,4}(D)) }\lesssim \lVert P(u^E\cdot\nabla u^E)\rVert_{C([0,T];W^{1,4}(D)) }\notag\\ & \lesssim \lVert u^E\rVert_{C([0,T];W^{1,4}(D))}^2\leq  C(N,u_0)\quad \mathbb{P}-a.s.
\end{align}
Following the idea of \cite{kato1984remarks}, let $v$ the corrector of the boundary layer of width $\delta=\delta(\eps)$, i.e. a divergence free vector field with support in a strip of the boundary of width $\delta$ such that $v^E-v\in V$ and $\mathbb{P}-a.s.$ uniformly in $t\in [0,T],\ \omega\in \Omega$ \begin{align}\label{property bl corrector}
   & \lVert v_t\rVert_{L^{\infty}(D)}\leq C(N,u_0), \ \lVert v_t\rVert\leq C(N,u_0)\delta^{\frac{1}{2}}, \ \lVert\partial_t v_t\rVert\leq C(N,u_0)\delta^{\frac{1}{2}},\notag \\ & \lVert\nabla v_t\rVert_{L^{\infty}(D)}\leq C(N,u_0)\delta^{-1}, 
\  \lVert\nabla v_t\rVert\leq C(N,u_0)\delta^{-1/2},  \  \lVert\rho\nabla v_t\rVert_{L^{\infty}(D)}\leq C(N,u_0),\notag\\ & \lVert\rho^2 \nabla v_t\rVert_{L^{\infty}(D)}\leq C(N,u_0)\delta, \  \lVert\rho \nabla v_t\rVert\leq C(N,u_0)\delta^{\frac{1}{2}},
\end{align}
$\rho$ being the distance function to $\partial D.$
Now we are ready to show the convergence of $v^{\eps}$ to $v^E$.
\begin{lemma}\label{convergence v^eps v^E}
    $v^{\eps}\rightarrow v^{E}$ in $C([0,T];H)$ in probability.
\end{lemma}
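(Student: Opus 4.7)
The plan is to carry out the stochastic analogue of Kato's proof of the inviscid limit, applied to the auxiliary processes in \eqref{auxiliary eps} and \eqref{auxiliary 0}. I would fix $\delta := c\eps$ with $c>0$ the constant provided by \autoref{strong kato hp}, let $v$ denote the boundary-layer corrector satisfying \eqref{property bl corrector}, and introduce
\[
W^\eps_t := v^\eps_t - v^E_t + v_t,
\]
which lies in $V$ for every $t$ since $v_t|_{\partial D}$ is chosen to match $v^E_t|_{\partial D}$. Because $\|v_t\| \leq C\delta^{1/2} \to 0$, showing $\|W^\eps\|_{C([0,T];H)} \to 0$ in probability is equivalent to the lemma.

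The bulk of the work is to apply It\^o's formula to $\|W^\eps\|^2$, using \eqref{auxiliary eps} and \eqref{auxiliary 0}, and to control each of the resulting contributions. The initial value $\|W^\eps_0\|^2 \leq 2\|u_0^\eps - u_0\|^2 + 2\|v_0\|^2$ vanishes; the It\^o correction $\eps t\sum_k \|\sigma_k\|^2$ is $O(\eps)$; the stochastic integral $2\sqrt{\eps}\int_0^\cdot \langle W^\eps, dW_s\rangle$ will be handled at the end by Burkholder--Davis--Gundy; and the drift $\langle W^\eps, \partial_t v\rangle$ is controlled by $\|\partial_t v\| \leq C\delta^{1/2}$. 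The crucial contributions come from the Stokes operator and from the nonlinearity. Integrating by parts in the viscous term, and using $\nabla W^\eps = \nabla v^\eps - \nabla v^E + \nabla v$, extracts the genuine dissipation $-2\eps\|\nabla v^\eps\|^2$, a negligible cross term with $\nabla v^E$, and the \emph{viscous Kato term} $-2\eps\langle\nabla v, \nabla v^\eps\rangle$. The last, after writing $v^\eps = u^\eps - z^\eps$ and applying Cauchy--Schwarz in both space and time together with $\|\nabla v\|_{L^2}\leq C\delta^{-1/2}$ and the support of $v$ in $\Gamma_{c\eps}$, will be bounded by $C T^{1/2}\bigl(\eps\int_0^T \|\nabla u^\eps\|^2_{L^2(\Gamma_{c\eps})} ds\bigr)^{1/2}$ plus a similar term in $\nabla z^\eps$, both small in probability by \autoref{strong kato hp} and \autoref{convergence forcing}.

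For the nonlinear increment I would expand $u^\eps = W^\eps + u^E - v - \xi^\eps$ with $\xi^\eps := F - z^\eps$ (vanishing by \autoref{convergence forcing}), and use the cancellation of $\langle \phi, \psi\cdot\nabla \phi\rangle$ whenever $\psi$ is divergence-free with $\psi\cdot n|_{\partial D}=0$. This reduces $\langle W^\eps, P(u^E\cdot\nabla u^E)-P(u^\eps\cdot\nabla u^\eps)\rangle$ to several pieces, each treated as follows: the Gr\"onwall piece $-\langle W^\eps, W^\eps\cdot \nabla u^E\rangle$, bounded by $C(N,u_0)\|W^\eps\|^2$; the corrector pieces $\langle W^\eps, u^E\cdot \nabla v\rangle$, $\langle W^\eps, v\cdot \nabla u^E\rangle$, $\langle W^\eps, v\cdot \nabla v\rangle$, each bounded by $C\delta^{1/2}$ times either $\|W^\eps\|$ or $\|\nabla W^\eps\|$ and closed via Young against the viscous dissipation together with \autoref{strong kato hp}; a $\xi^\eps$-remainder absorbed by \autoref{convergence forcing}; and the critical nonlinear boundary-layer term $\langle W^\eps, W^\eps\cdot \nabla v\rangle$. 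For this last I use the local Poincar\'e inequality $\|W^\eps\|_{L^2(\Gamma_\delta)} \leq C\delta \|\nabla W^\eps\|_{L^2(\Gamma_\delta)}$, which holds since $W^\eps|_{\partial D}=0$, together with $\|\nabla v\|_{L^\infty(\Gamma_\delta)} \leq C\delta^{-1}$, to obtain
\[
|\langle W^\eps, W^\eps\cdot\nabla v\rangle| \leq C\delta \|\nabla W^\eps\|^2_{L^2(\Gamma_\delta)},
\]
and then, decomposing $\nabla W^\eps = \nabla u^\eps - \nabla z^\eps - \nabla v^E + \nabla v$ and carefully tracking each component, the dominant contribution after time integration reduces to $\eps\int_0^T \|\nabla u^\eps\|^2_{L^2(\Gamma_{c\eps})} ds$, again controlled by \autoref{strong kato hp}.

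Gathering all estimates, taking the supremum over $t\in[0,T]$, and applying Burkholder--Davis--Gundy to the stochastic integral, the plan concludes with a stochastic Gr\"onwall-type inequality of the form
\[
\mathbb{E}\Big[\sup_{s\in[0,t]}\|W^\eps_s\|^2\Big] \leq o_\eps(1) + C(N,u_0)\int_0^t \mathbb{E}\Big[\sup_{r\in[0,s]}\|W^\eps_r\|^2\Big]\, ds,
\]
where $o_\eps(1)$ gathers all contributions vanishing in probability thanks to \autoref{strong kato hp} and \autoref{convergence forcing}; this yields $\mathbb{E}[\sup_{[0,T]}\|W^\eps\|^2] \to 0$, and therefore the desired convergence in probability. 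The main obstacle, and the raison d'\^etre of \autoref{strong kato hp}, will be the critical nonlinear boundary-layer term $\langle W^\eps, W^\eps\cdot\nabla v\rangle$ and the precise bookkeeping of the $\nabla v$ contributions within the strip $\Gamma_{c\eps}$; every other ingredient is either a refinement of Kato's deterministic estimates, absorbed by Gr\"onwall, or routine stochastic bookkeeping. A minor technical point is that \autoref{strong kato hp} is formulated directly for $u^\eps$ rather than for the auxiliary $v^\eps = u^\eps - z^\eps$, but the transfer is immediate given the strong convergence of $z^\eps$ from \autoref{convergence forcing}.
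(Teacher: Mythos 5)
Your overall architecture coincides with the paper's: both proofs are Kato-type arguments built on the boundary-layer corrector $v$ with $\delta=c\eps$, both isolate a critical quadratic boundary-layer term controlled by \autoref{strong kato hp}, and both close with Gr\"onwall. However, two steps of your plan would fail as written.

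First, your treatment of the critical term $\langle W^\eps, W^\eps\cdot\nabla v\rangle$ does not close. After the local Poincar\'e step you are left with $C\delta\lVert\nabla W^\eps\rVert^2_{L^2(\Gamma_\delta)}$, and when you then decompose $\nabla W^\eps=\nabla u^\eps-\nabla z^\eps-\nabla v^E+\nabla v$, the last component contributes $\delta\lVert\nabla v\rVert^2_{L^2(\Gamma_\delta)}\leq \delta\cdot C\delta^{-1}=O(1)$ by \eqref{property bl corrector}; this term does not vanish, so the claim that the dominant contribution reduces to $\eps\int_0^T\lVert\nabla u^\eps\rVert^2_{L^2(\Gamma_{c\eps})}ds$ is false. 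The corrector must not sit quadratically inside the term to which you apply the Poincar\'e--Hardy estimate. The paper avoids this by the algebraic rearrangement \eqref{step 6 main proof}--\eqref{step 10 main proof}, which produces the classical Kato term $b(u^\eps,v,u^\eps)$ — quadratic in the \emph{full solution} $u^\eps$, to which \cite[Equation 5.8]{kato1984remarks} and \autoref{strong kato hp} apply directly — while the remaining corrector--corrector and corrector--$v^E$ interactions are killed individually by antisymmetry of $b$ and the bounds $\lVert v\rVert\leq C\delta^{1/2}$, $\lVert\nabla v^E\rVert_{L^\infty}\leq C$. You would need to perform the expansion of $W^\eps\cdot\nabla v$ \emph{before} invoking Poincar\'e, not after.

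Second, your concluding inequality $\mathbb{E}\bigl[\sup_{s\le t}\lVert W^\eps_s\rVert^2\bigr]\leq o_\eps(1)+C\int_0^t\mathbb{E}\bigl[\sup_{r\le s}\lVert W^\eps_r\rVert^2\bigr]ds$ is internally inconsistent: the Kato term $\eps\int_0^T\lVert\nabla u^\eps\rVert^2_{L^2(\Gamma_{c\eps})}ds$ is only assumed to vanish \emph{in probability}, and the available uniform bound is merely $L^1(\Omega)$, so you cannot place it inside a deterministic $o_\eps(1)$ after taking expectations, nor conclude $\mathbb{E}[\sup_{[0,T]}\lVert W^\eps\rVert^2]\to 0$. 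The paper instead applies Gr\"onwall pathwise, uses that the exponential factor $e^{CT(1+\lVert\nabla v^E\rVert_{L^\infty L^\infty}+\lVert\nabla z^\eps\rVert_{L^\infty L^\infty})}$ is bounded $\mathbb{P}$-a.s. (via \autoref{convergence forcing} and \autoref{thm well posed euler forcing}), and only then shows that the collected remainder \eqref{step 16 main proof} tends to zero in probability, which suffices for the lemma. Your argument needs the same restructuring.
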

\begin{proof}
 Arguing as in \cite[Theorem 9]{luongo2022inviscid} one can show that the following relations hold true.
 \begin{align}\label{ito veps}
     \lVert v^{\eps}_t\rVert^2+2\eps\int_0^t \lVert \nabla v^{\eps}\rVert^2 ds &=\lVert u_0^{\eps}\rVert^2-2\int_0^t b(v^{\eps}+z^{\eps},z^{\eps},v^{\eps})ds  +2\eps^{1/2}\int_0^t\langle dW^{\eps}_s,v^{\eps}\rangle+\eps t Tr(Q) \quad \mathbb{P}-a.s.,
 \end{align}   
 \begin{align}\label{ito vE}
     \lVert v^{E}_t\rVert^2&=\lVert u_0\rVert^2-2\int_0^t b(v^E+F,F,v^{E})ds\quad \mathbb{P}-a.s. 
 \end{align}   
Exploiting relations \eqref{ito veps}, \eqref{ito vE} we can study $\lVert v^{\eps}_t-v^E_t\rVert^2$. Indeed, it holds
\begin{align}\label{step 1 main proof}
    \lVert v^{\eps}_t-v^E_t\rVert^2&=\lVert v^{\eps}_t\rVert^2+\lVert v^E_t\rVert^2-2\langle v^{\eps}_t,v^E_t\rangle \notag\\ & \leq \lVert u_0^{\eps}\rVert^2-2\int_0^t b(v^{\eps}_s+z^{\eps}_s,z^{\eps}_s,v^{\eps}_s)ds  +2\eps^{1/2}\int_0^t\langle dW^{\eps}_s,v^{\eps}_s\rangle+\eps t Tr(Q)\notag\\ & + \lVert u_0\rVert^2-2\int_0^t b(v^E_s+F_s,F_s,v^{E}_s)ds -2\langle v^{\eps}_t,v^E_t-v_t\rangle-2\langle v^{\eps}_t,v_t\rangle.
\end{align}
Thanks to the fact that $v^E-v\in C^1([0,T];V)$ we can rewrite $\langle v^{\eps}_t,v^E_t-v_t\rangle$ via It\^o formula:  $\langle v^{\eps}_t,v^E_t-v_t\rangle=\langle u^{\eps}_0,u_0-v_0\rangle+\int_0^t \langle v^{\eps}_s,\partial_s(v^E_s-v_s)\rangle ds +\int_0^t \langle d v^{\eps}_s,v^E_s-v_s\rangle . $ Therefore
\begin{align}\label{step 2 main proof}
    \langle v^{\eps}_t,v^E_t-v_t\rangle&=\langle u^{\eps}_0,u_0-v_0\rangle+\int_0^t \langle v^{\eps}_s,\partial_s(v^E_s-v_s)\rangle ds-\eps\int_0^t \langle \nabla v^{\eps}_s,\nabla (v^E_s-v_s)\rangle ds\notag\\ & +\int_0^t b(v^{\eps}_s+z^{\eps}_s, v^E_s-v_s,v^{\eps}_s+z^{\eps}_s)ds+\eps^{1/2}\langle W^{\eps}_t,v^{E}_t-v_t\rangle-\eps^{1/2}\int_0^t\langle W^{\eps}_s,\partial_s(v^E_s-v_s)\rangle ds.
\end{align}
Let us observe that by assumptions \begin{align*}
     \lVert u_0^{\eps}\rVert^2+\lVert u_0\rVert^2-2\langle u^{\eps}_0,u_0\rangle=\lVert u_0^{\eps}-u_0\rVert^2=o(1),\quad \eps t Tr(Q)\leq \eps T Tr(Q)=o(1).
\end{align*} Moreover, thanks to the properties of the boundary layer corrector \eqref{property bl corrector}, $\mathbb{P}-a.s.$ it holds \begin{align*}
    &\langle u_0^{\eps},v_0\rangle\leq C(u_0,N)\delta^{1/2}=o(1), \\ &\langle v^{\eps}_t,v_t\rangle\leq C(N,u_0)\delta^{1/2}\operatorname{sup}_{t\in [0,T]}\lVert v_t^{\eps}\rVert,\\ &\eps^{1/2}\langle W^{\eps}_t,v^{E}_t-v_t\rangle-{\eps}^{1/2}\int_0^t\langle W^{\eps}_s,\partial_s(v^E_s-v_s)\rangle ds\leq \eps^{1/2}C(N,u_0)\operatorname{sup}_{t\in [0,T]}\lVert W^{\eps}_t\rVert.
\end{align*}  Exploiting these facts, inserting relation \eqref{step 2 main proof} in \eqref{step 1 main proof} we obtain
\begin{align}\label{step 3 main proof}
    \lVert v^{\eps}_t-v^E_t\rVert^2& \leq o(1)+\delta^{1/2}C(N,u_0)\operatorname{sup}_{t\in [0,T]}\lVert v^{\eps}_t\rVert \notag\\ & +\eps^{1/2}C(N,u_0)\operatorname{sup}_{t\in [0,T]}\lVert W^{\eps}_t\rVert-2\int_0^t b(v^{\eps}_s+z^{\eps}_s,z^{\eps}_s,v^{\eps}_s)ds \notag\\ & +2\eps^{1/2}\int_0^t\langle dW^{\eps}_s,v^{\eps}_s\rangle-2\int_0^t b(v^E_s+F_s,F_s,v^{E}_s)ds\notag\\ &-2\int_0^t \langle v^{\eps}_s,\partial_s(v^E_s-v_s)\rangle ds+2\eps\int_0^t \langle \nabla v^{\eps}_s,\nabla(v^E_s-v_s)\rangle ds\notag\\ & -2\int_0^t b(v_s^{\eps}+z_s^{\eps}, v_s^E-v_s,v_s^{\eps}+z_s^{\eps})ds \quad\mathbb{P}-a.s.
\end{align}
In order to understand the behavior of $\int_0^t \langle v^{\eps}_s,\partial_s(v^E_s-v_s)\rangle ds$, we observe that, thanks to \eqref{property bl corrector},\begin{align*}
    \int_0^t \langle v^{\eps}_s,\partial_s v_s\rangle ds\leq \delta^{1/2} C(N,u_0)\operatorname{sup}_{t\in [0,T]}\lVert v_t^{\eps}\rVert \quad\mathbb{P}-a.s.
\end{align*} Moreover, since $v^{E}$ satisfies \eqref{auxiliary 0}, we have $\int_0^t \langle v_s^{\eps},\partial_s v^E_s\rangle ds=-\int_0^t b(v^{E}_s+F_s,v^E_s+F_s,v^{\eps}_s) ds $. Let us rewrite the trilinear forms appearing  \eqref{step 3 main proof}:
\begin{align}\label{step 4 main proof}
    & b(v^{E}+F,v^E+F,v^{\eps})-b(v^{\eps}+z^{\eps},z^{\eps},v^{\eps})-b(v^E+F,F,v^E)\notag\\ &-b(v^{\eps}+z^{\eps},v^E-v,v^{\eps}+z^{\eps})\notag \\ & = b(v^{E},v^E,v^{\eps}-v^E)+b(v^{E}+F,F,v^{\eps})+b(F,v^E+F,v^{\eps})\notag\\ & -b(v^{\eps}+z^{\eps},z^{\eps},v^{\eps})-b(v^E+F,F,v^E)-b(v^{\eps},v^E,v^{\eps}-v^E)\notag\\ &-b(z^{\eps},v^E-v, v^{\eps}+z^{\eps})-b(v^{\eps},v^E,z^{\eps})+b(v^{\eps},v,v^{\eps}+z^{\eps}).
\end{align}
By simple computations the terms in \eqref{step 4 main proof} can be rewritten as:
\begin{align}\label{step 5 main proof}
   \lvert  b(v^E,v^E,v^{\eps}-v^E)-b(v^{\eps},v^E,v^{\eps}-v^E)\rvert\leq \lVert \nabla v^E\rVert_{L^{\infty}}\lVert v^{\eps}-v^E\rVert^2.
\end{align}
\begin{align}\label{step 6 main proof}
b(v^{\eps},v,v^{\eps}+z^{\eps})-b(z^{\eps},v^E-v, v^{\eps}+z^{\eps})=b(u^{\eps},v,u^{\eps})-b(z^{\eps},v^{E},u^{\eps}). \end{align}
\begin{align}\label{step 7 main proof}
    -b(v^{\eps}+z^{\eps},z^{\eps},v^{\eps})+b(v^{\eps},v^E,z^{\eps})=b(v^{\eps},z^{\eps},v^E-v^{\eps})-b(z^{\eps},z^{\eps},v^{\eps}).
\end{align}
\begin{align}\label{step 8 main proof}
    & b(F,v^E+F,v^{\eps})+b(v^{E}+F,F,v^{\eps})-b(v^E+F,F,V^E)\notag\\ &=b(v^E,F,v^{\eps}-v^E)+b(F,F,v^{\eps}-v^E)+b(F,v^E,v^{\eps}).
\end{align}
Preliminarily, let us rewrite the last terms in each of \eqref{step 6 main proof}, \eqref{step 7 main proof} and \eqref{step 8 main proof} obtaining
\begin{align}\label{step 9 main proof}
    & -b(z^{\eps},v^{E},u^{\eps})-b(z^{\eps},z^{\eps},v^{\eps})+b(F,v^E,v^{\eps})= b(F-z^{\eps},v^E,v^{\eps})+b(z^{\eps},v^{\eps}-v^E,z^{\eps}).
\end{align}
Let us leave out $b(u^{\eps},v,u^{\eps})$ from our analysis for a moment. Indeed, it well be treated differently. Then considering the other terms appearing in \eqref{step 6 main proof}, \eqref{step 7 main proof} and \eqref{step 8 main proof} and exploiting \eqref{step 9 main proof}, we have
\begin{align}\label{step 10 main proof}
    & b(v^{\eps},z^{\eps},v^E-v^{\eps})+b(v^E,F,v^{\eps}-v^E)+b(F,F,v^{\eps}-v^E) \notag\\ & +b(F-z^{\eps},v^E,v^{\eps})-b(z^{\eps},z^{\eps},v^{\eps}-v^E)\pm b(z^{\eps},F,v^{\eps}-v^E)\notag\\ & \pm b(v^E,z^{\eps},v^E-v^{\eps})\notag\\ &= b(v^E+z^{\eps}, F-z^{\eps}, v^{\eps}-v^E)+b(v^{\eps}-v^E,z^{\eps},v^E-v^{\eps})\notag \\ &+b(F-z^{\eps},v^E,v^{\eps}) +b(F-z^{\eps},F,v^{\eps}-v^E).
\end{align}
Therefore we can simplify \eqref{step 3 main proof} and it holds
\begin{align}\label{step 11 main proof}
\lVert v^{\eps}_t-v^E_t\rVert^2& \leq o(1)+\delta^{1/2}C(N,u_0)\operatorname{sup}_{t\in [0,T]}\lVert v^{\eps}_t\rVert+\eps^{1/2}C(N,u_0)\operatorname{sup}_{t\in [0,T]}\lVert W^{\eps}_t\rVert \notag\\ & +2\eps^{1/2}\int_0^t\langle dW^{\eps}_s,v^{\eps}_s\rangle+2\eps\int_0^t \langle \nabla v^{\eps}_s,\nabla(v^E_s-v_s)\rangle ds \notag\\ & +2\lVert \nabla v^E\rVert_{L^{\infty}(0,T;L^{\infty}}\int_0^t \lVert v^{\eps}_s-v^E_s\rVert^2 ds+2\int_0^t b(u^{\eps}_s,v_s,u^{\eps}_s) ds \notag\\ & +2\int_0^t \lVert v^{\eps}_s-v^E_s\rVert \lVert \nabla F_s\rVert_{L^{4}}\lVert F_s-z^{\eps}_s\rVert_{L^4} ds\notag\\ & +2\int_0^t \lVert v^{\eps}_s-v^E_s\rVert \lVert \nabla(F_s-z^{\eps}_s)\rVert \lVert v^E_s+z^{\eps}_s\rVert_{L^{\infty}}ds  \notag \\ & +2\lVert \nabla z^{\eps}\rVert_{L^{\infty}(0,T;L^{\infty})}\int_0^t \lVert v^{\eps}_s-v^E_s\rVert^2 ds+2\int_0^t \lVert \nabla v^{E}_s\rVert\lVert v^{\eps}_s\rVert \lVert F_s-z^{\eps}_s\rVert_{L^{\infty}} ds.
\end{align}
Now we can treat the term $\eps\int_0^t \langle \nabla v^{\eps}_s,\nabla(v^E_s-v_s)\rangle ds$ exploiting the properties of the boundary layer corrector \eqref{property bl corrector} and the convergence $z^{\eps}\rightarrow F$ in $C([0,T];D((-A)^{\gamma-\frac{1}{2}-\theta}))$ $\mathbb{P}$-a.s. 
\begin{align}\label{step 12 main proof}
    & 2\eps\int_0^t \langle \nabla v^{\eps}_s,\nabla(v^E_s-v_s)\rangle ds \notag \\& =2\eps\int_0^t \langle \nabla u^{\eps},\nabla (v^E_s-v_s)\rangle ds-2\eps \int_0^t \langle \nabla z^{\eps}_s,\nabla(v^E_s-v_s)\rangle ds\notag\\ &\leq 2\eps \int_0^t \lVert \nabla u^{\eps}_s\rVert \lVert \nabla v^E_s\rVert ds +2\eps \int_0^t \lVert \nabla u^{\eps}_s\rVert_{L^2(\Gamma_\delta)}\lVert \nabla v_s\rVert ds\notag\\ &+2\eps\int_0^t \lVert (-A)^{1/2}z^{\eps}_s\rVert \lVert \nabla(v^E_s-v_s)\rVert ds  \notag\\ & \leq 2\eps \int_0^t \lVert \nabla u^{\eps}_s\rVert \lVert \nabla v^E_s\rVert ds +\delta^{-1/2}\eps C(N,u_0) \int_0^t \lVert \nabla u^{\eps}_s\rVert_{L^2(\Gamma_\delta)}\notag\\ &+\delta^{-1/2}\eps C(N,u_0)\int_0^t \lVert (-A)^{1/2}z^{\eps}_s\rVert  ds
\end{align}

The term $\int_0^t b(u^{\eps},v,u^{\eps})ds $ is the classical term in the analysis of the inviscid limit in the Kato's regime, it can be estimate by
\begin{align}\label{step 13 main proof}
    \left\lvert\int_0^t b(u^{\eps}_s,v_s,u^{\eps}_s)ds \right\rvert \leq \delta C(N,u_0) \int_0^t\lVert\nabla u^{\eps}_s\rVert^2_{L^2(\Gamma_{\delta})} ds
\end{align}
see \cite[Equation 5.8]{kato1984remarks}.
Combining estimates \eqref{step 11 main proof}, \eqref{step 12 main proof} and \eqref{step 13 main proof}, choosing $\delta=c\eps$, where $c$ is the constant appearing in \autoref{strong kato hp}, it holds
\begin{align}\label{step 14 main proof}
\lVert v^{\eps}_t-v^E_t\rVert^2& \leq o(1)+\eps^{1/2} C(N,u_0)\operatorname{sup}_{t\in [0,T]}\lVert v^{\eps}_t\rVert +2{\eps}^{1/2}\int_0^t\langle dW^{\eps}_s,v^{\eps}_s\rangle+\eps C(N,u_0) \int_0^t \lVert \nabla u^{\eps}_s\rVert ds \notag \\ &+\eps^{1/2}C(N,u_0)\operatorname{sup}_{t\in [0,T]}\lVert W^{\eps}_t\rVert+\eps^{1/2} C(N,u_0) \int_0^t \lVert \nabla u^{\eps}_s\rVert_{L^2(\Gamma_{c\eps})} ds  +\eps^{1/2} C(N,u_0) \int_0^t \lVert \nabla u^{\eps}_s\rVert_{L^2(\Gamma_{c\eps})} \notag\\ & +2\lVert \nabla v^E\rVert_{L^{\infty}(0,T;L^{\infty})}\int_0^t \lVert v^{\eps}-v^E\rVert^2 ds+ \eps C(N,u_0)\int_0^t\lVert\nabla u^{\eps}_s\rVert^2_{L^2(\Gamma_{c\eps})} ds \notag\\ & +2(1+\lVert \nabla z^{\eps}\rVert_{L^{\infty}(0,T;L^{\infty})})\int_0^t \lVert v^{\eps}_s-v^E_s\rVert^2 ds +2\int_0^t \lVert v^{\eps}_s\rVert\lVert \nabla v^{E}_s\rVert \lVert F_s-z^{\eps}_s\rVert_{L^{\infty}} ds.
\end{align}
Therefore, by Gr\"onwall's inequality, equation \eqref{step 14 main proof} implies 
\begin{align}\label{step 15 main proof}
    \operatorname{sup}_{t\in [0,T]}\lVert v^{\eps}_t-v^{E}_t\rVert^2& \leq e^{2T(1+\lVert \nabla v^{E}\rVert_{L^{\infty}(0,T;L^{\infty})}+\lVert \nabla z^{\eps}\rVert_{L^{\infty}(0,T;L^{\infty})})}\times\notag\\ & \bigg(o(1)+\eps^{1/2} C(N,u_0)\operatorname{sup}_{t\in [0,T]}\lVert v^{\eps}_t\rVert +2{\eps}^{1/2}\operatorname{sup}_{t\in [0,T]}\left\lvert\int_0^t\langle dW^{\eps}_s,v^{\eps}_s\rangle\right\rvert\notag\\ &+\eps^{1/2}C(N,u_0)\operatorname{sup}_{t\in [0,T]}\lVert W^{\eps}_t\rVert+\eps C(N,u_0) \int_0^T \lVert \nabla u^{\eps}_s\rVert ds +\eps^{1/2} C(N,u_0) \int_0^T \lVert \nabla u^{\eps}_s\rVert_{L^2(\Gamma_{c\eps})} \notag\\ & + \eps C(N,u_0)\int_0^T\lVert\nabla u^{\eps}_s\rVert^2_{L^2(\Gamma_{c\eps})} ds  +2\int_0^T \lVert v^{\eps}_s\rVert\lVert \nabla v^{E}_s\rVert \lVert F_s-z^{\eps}_s\rVert_{L^{\infty}} ds\bigg)
\end{align}
 Since $\gamma\geq 2$ we can find $\theta>0$ small enough such that $D((-A)^{\gamma-1/2-\theta})\hookrightarrow W^{1,\infty}$. Therefore,  $\lVert \nabla z^{\eps}\rVert_{L^{\infty}(0,T;L^{\infty})}$ is $\mathbb{P}-a.s.$ bounded  by $C(N,u_0)$ from \autoref{convergence forcing}. Similarily, from \autoref{thm well posed euler forcing}, $\lVert \nabla v^{E}\rVert_{L^{\infty}(0,T;L^{\infty})}\leq C(N,u_0)$ $\quad \mathbb{P}-a.s$. \\
Therefore $e^{2T(1+\lVert \nabla v^{E}\rVert_{L^{\infty}_tL^{\infty}_x}+\lVert \nabla z^{\eps}\rVert_{L^{\infty}_tL^{\infty}_x})}\leq C(N,u_0) \quad \mathbb{P}-a.s.$ This means that, in order to show that \autoref{convergence v^eps v^E} holds, it is enough to prove that 
\begin{align}\label{step 16 main proof}
    &\eps^{1/2}C(N,u_0)\left(\operatorname{sup}_{t\in [0,T]}\lVert W^{\eps}_t\rVert+\operatorname{sup}_{t\in [0,T]}\left\lvert\int_0^t\langle dW^{\eps}_s,v^{\eps}_s\rangle\right\rvert\right)+\eps C(N,u_0) \int_0^T \lVert \nabla u^{\eps}_s\rVert ds\notag\\ &+\eps^{1/2} C(N,u_0) \int_0^T \lVert \nabla u^{\eps}_s\rVert_{L^2(\Gamma_{c\eps})} + \eps C(N,u_0)\int_0^T\lVert\nabla u^{\eps}_s\rVert^2_{L^2(\Gamma_{c\eps})} ds  +2\int_0^T \lVert v^{\eps}_s\rVert\lVert \nabla v^{E}_s\rVert \lVert F_s-z^{\eps}_s\rVert_{L^{\infty}} ds\notag\\ & \rightarrow 0 \quad \text{in Probability.}
\end{align}
The terms \begin{align*}
   \eps^{1/2} C(N,u_0)\int_0^T \lVert \nabla u^{\eps}_s\rVert_{L^2(\Gamma_{c\eps})} + \eps C(N,u_0)\int_0^T\lVert\nabla u^{\eps}_s\rVert^2_{L^2(\Gamma_{c\eps})} ds\rightarrow 0  \quad \text{in Probability}
\end{align*}
thanks to \autoref{strong kato hp}.
The terms
\begin{align*}
     \eps^{1/2} C(N,u_0)\left(\sup_{t\in [0,T]}\lVert W^{\eps}_t\rVert\right)+{\eps}^{1/2}\operatorname{sup}_{t\in [0,T]}\left\lvert\int_0^t\langle dW^{\eps}_s,v^{\eps}_s\rangle\right\rvert+\eps C(N,u_0) \int_0^T \lVert \nabla u^{\eps}_s\rVert ds \rightarrow 0 \quad \text{in Probability}
\end{align*}
since it holds by Burkholder-Davis-Gundy inequality, H\"older inequality and \eqref{energy estimate NS}
\begin{align*}
    &\mathbb{E}\left[\sup_{t\in [0,T]}\lVert W^{\eps}_t\rVert\right]+\mathbb{E}\left[\operatorname{sup}_{t\in [0,T]}\left\lvert\int_0^t\langle dW^{\eps}_s,v^{\eps}_s\rangle\right\rvert\right]+\eps^{1/2}  \mathbb{E}\left[\int_0^T \lVert \nabla u^{\eps}_s\rVert ds\right] \notag\\ &\leq C+C\mathbb{E}\left[\operatorname{sup}_{t\in [0,T]}\lVert v^{\eps}_t\rVert^2\right]^{1/2}+C(T)\mathbb{E}\left[\eps\int_0^T\lVert \nabla u^{\eps}_s\rVert^2 ds \right]\\ & \leq C(N,u_0,T).
\end{align*}
Lastly \begin{align*}
\int_0^T \lVert v^{\eps}_s\rVert\lVert \nabla v^{E}_s\rVert \lVert F_s-z^{\eps}_s\rVert_{L^{\infty}} ds\rightarrow 0  \quad \text{in Probability}    
\end{align*}
thanks to \autoref{convergence forcing}, \eqref{energy estimate NS} and \eqref{Energy Euler }. Indeed it holds:
\begin{align*}
    \mathbb{E}\left[\int_0^T \lVert v^{\eps}_s\rVert\lVert \nabla v^{E}_s\rVert \lVert F_s-z^{\eps}_s\rVert_{L^{\infty}} ds\right]& \leq C(N, u_0)\mathbb{E}\left[\int_0^T \lVert v^{\eps}_s\rVert \lVert F_s-z^{\eps}_s\rVert_{L^{\infty}} ds\right]\\ & \leq 
    C(N, u_0)\mathbb{E}\left[\int_0^T \lVert v^{\eps}_s\rVert^2 \right]^{1/2}\mathbb{E}\left[\int_0^T \lVert F_s-z^{\eps}_s\rVert_{L^{\infty}}^2 ds\right]^{1/2}\rightarrow 0.
\end{align*}
Therefore \eqref{step 16 main proof} holds and the thesis follows.
\end{proof}
Combining \autoref{convergence forcing} and \autoref{convergence v^eps v^E} the second condition in \autoref{conditions weak convergence approach} holds.

\begin{proof}[Proof of \autoref{main thm LDP NS}]
Since we already checked the validity of condition 1 and condition 2 in \autoref{conditions weak convergence approach}, it remains to show that for each $v\in \mathcal{E}$ the map $u_0\rightarrow I^{NS}_{u_0}(v)$ is a lower continuous map from $\mathcal{E}_0^{NS}$ to $[0,+\infty]$ in order to apply \autoref{weak convergence approach abstract thm} and complete the proof. The arguments goes in this way. Fix $u_0\in \mathcal{E}_0^{NS}$ and a family $\{u_0^n\}_{n\in \mathbb{N}}\subseteq \mathcal{E}_0^{NS} $ converging to $u_0$. Without loss of generality we may assume $\operatorname{liminf}_{n\rightarrow +\infty}I^{NS}_{u_0^n}(v)=M<+\infty$ otherwise we have nothing to prove. Therefore, thanks to the well-posedness of the Euler equations guaranteed by \autoref{thm well posed euler forcing},  there exists a subsequence $n_k$ and family $\{f^{n_k}\}_{n_k\in\mathbb{N}}\subseteq S^{2M}$ such that $\mathcal{G}^{NS,0}(u_0^{n_k},\int_0^\cdot f^{n_k}_s ds)=v$. Moreover $ f^{n_k}\in S^{2M} $ for all $k$.
Up to passing to a further subsequence, which we continue to denote by $f^{n_k}$ for simplicity of notation, there exists $f\in S^{2M}$ such that $f^{n_k}\rightharpoonup f$ in $L^2(0,T;H_0)$. Thanks to \eqref{convergence integral forcing cond1} and \eqref{final convergence euler} it follows that $\mathcal{G}_0^{NS}: \mathcal{E}_0^{NS}\times L^2([0,T];H_0)\rightarrow \mathcal{E}$ is a continuous map endowing $L^2([0,T];H_0)$ with the weak topology. Therefore $\mathcal{G}^{NS,0}(u_0,f)=v$ and from the lower semicontinuity of the norm with respect to the weak convergence the thesis follow immediately: 
\begin{align*}
    I_{u_0}(v)\leq \frac{1}{2}\int_0^T \lVert f_s\rVert^2_{H_0} ds \leq \operatorname{liminf}_{k\rightarrow +\infty}\frac{1}{2}\int_0^T \lVert f^{n_k}_s\rVert^2_{H_0} ds\leq M=\operatorname{liminf}_{n\rightarrow +\infty} I_{u^n_0}(v).
\end{align*}
\end{proof}
\begin{remark}\label{remark inverse kato}
As it is classical in the analysis of the inviscid limit in bounded domains, \autoref{strong kato hp} for the forcing terms $f^{\eps}$, $f$ is implied by the convergence in probability of $u^{\eps}$ to $u^E$ in the probability space introduced by Skorokhod's representation theorem. Let us consider \eqref{ito NS} for $t=T$ and take the limsup of this expression for $\eps \rightarrow 0$. It holds \begin{align}\label{proof inverse convergence}
        2\operatorname{limsup}_{\eps\rightarrow 0}\eps \int_0^T \lVert \nabla u^{\eps}_s\rVert^2_{L^2}ds &\leq \operatorname{\limsup}_{\eps\rightarrow 0}\lVert u_0^{\eps}\rVert^2+\operatorname{\limsup}_{\eps\rightarrow 0}\eps T\sum_{k\in K}\lVert \sigma_k\rVert^2\\ & +2\operatorname{\limsup}_{\eps\rightarrow 0}\sqrt{\eps}\int_0^T \langle u^{\eps}_s,dW^{\eps}_s\rangle+2\operatorname{\limsup}_{\eps\rightarrow 0} \int_0^T\langle f^{\eps}_s, u_s^{\eps}\rangle ds\notag \\ & +\limsup_{\eps \rightarrow 0}\{-\lVert u^{\eps}_T\rVert^2\}.
\end{align}
Under our assumptions it follows immediately that 
\begin{align*}
    \operatorname{\limsup}_{\eps\rightarrow 0}\lVert u_0^{\eps}\rVert^2+\operatorname{\limsup}_{\eps\rightarrow 0}\eps T\sum_{k\in K}\lVert \sigma_k\rVert^2+ \limsup_{\eps \rightarrow 0}\{-\lVert u^{\eps}_T\rVert^2\}=\lVert u_0\rVert^2-\lVert u^E_T\rVert^2 \quad \text{in Probability.}    
\end{align*}
Moreover \begin{align*}
    \operatorname{\limsup}_{\eps\rightarrow 0}\sqrt{\eps}\left\lvert\int_0^T \langle u^{\eps},dW^{\eps}_s\rangle \right\rvert=0  \quad \text{in Probability}
\end{align*}
since by \eqref{energy estimate NS}
\begin{align*}
    \mathbb{E}\left[\left\lvert\int_0^T\langle dW^{\eps}_s,u^{\eps}_s\rangle\right\rvert\right]&\leq C\mathbb{E}\left[\operatorname{sup}_{t\in [0,T]}\lVert u^{\eps}_t\rVert^2\right]^{1/2}\leq C(N,u_0,T).
\end{align*}
Lastly 
\begin{align*}
    \operatorname{lim}_{\eps\rightarrow 0}\int_0^T\langle f^{\eps}_s, u^{\eps}_s\rangle ds=\int_0^T \langle f_s,u^E_s\rangle ds \quad \text{in Probability}.
\end{align*}
Indeed 
\begin{align*}
    \left\lvert \int_0^T\langle f^{\eps}_s, u^{\eps}_s\rangle-\langle f_s,u^E_s\rangle ds \right\rvert& \leq \left\lvert \int_0^T\langle f^{\eps}_s, u^{\eps}_s-u^E_s\rangle ds \right\rvert+\left\lvert \int_0^T\langle f^{\eps}_s-f_s, u^E_s\rangle ds \right\rvert\\ & =I_1+I_2.
\end{align*}
$I_1\rightarrow 0 $ in Probability since we assumed that $u^{\eps}\rightarrow u^E$ in $C([0,T];H)$ in Probability. $I_1\rightarrow 0 $ $\mathbb{P}-a.s.$ since in the space introduced by Skorokhod's representation theorem $f^{\eps}\rightarrow f$ weakly in $L^2(0,T;H_0)$ $\mathbb{P}-a.s.$ and $u^E\in \left(L^2(0,T;H_0)\right)^*$ $\mathbb{P}-a.s.$ Therefore we proved that \begin{align*}
    \operatorname{limsup}_{\eps\rightarrow 0}\eps \int_0^T \lVert \nabla u^{\eps}_s\rVert^2_{L^2}ds=\lVert u_0\rVert^2-\lVert u^E_T\rVert^2-2\int_0^T\langle f_s,u^E_s\rangle ds \quad \text{in Probability}
\end{align*}
which implies \autoref{strong kato hp} by \eqref{Energy Euler }.
\end{remark}

\subsection{The Case of fluids with radial symmetry}\label{sec:radial symmetry}
We now prove that an unconditioned result holds in presence of strong assumpionts on the domain and data, that is a circular symmetry, see \autoref{hypothesis symmetry}. Therefore, $D=B(0, 1)$.

The reason why this particular geometry can be treated more easily lies in the fact the we can show that the solution of the Navier Stokes equations given by \autoref{thm well-posed ns forcing} posses radial symmetry, and in turn show that the nonlinear term in the equation vanishes. We will be able to represent the solution $u^{\eps}(t,x)=v^\eps(x)\frac{x^\perp}{|x|}$ where $v^{\eps}$ is a radial function satisfying an appropriate auxiliary equation. Then we will exploit this particular representation formula in order to prove the validity of \autoref{main thm LDP NS radial symmetry}.\\ By radial functions, we mean functions $g$ such that $g(R_{\theta}x)=g(x)$ for a.e. $x\in B,$ for each $\theta\in [0,2\pi]$, $R_{\theta}:B\rightarrow B$ being the counterclockwise rotation of the disk about its center by the angle $\theta$. Any function $u(x)$ that can be written as $\bar u(|x|)\frac{x^\perp}{|x|}$ will be called circularly symmetric and the radial function $\bar u$ will be called its radial part. 

We want to consider the following equation for the scalar function $v^\eps$:
\begin{equation}\label{auxnse}
    dv^\eps = \Big[ \eps(\Delta v^\eps - \frac{v^\eps}{|x|^2}) + \bar f_t\Big]dt + \sqrt{\eps}\sum_{k\in K}\sigma_k(|x|)dW^k_t  
\end{equation}
where the forcing $\bar f_t$ and the initial datum $\chi^\eps$ are radial functions in $L^2(B)$. In order to study problem \eqref{auxnse} we need to introduce some space of functions and operators, we refer to \cite{lopes2008vanishing} and the references therein for the proof of this statement and some discussions on this topic.\\
Let $\mathcal{H}^1:=H^1_0(D)\cap L^2(D, \frac{dx}{|x|^2})$ endowed with the following scalar product 
 $$\langle u, v \rangle_{\mathcal{H}^1}= \langle \nabla u, \nabla v \rangle + \langle \frac{u}{|x|}, \frac{v}{|x|} \rangle $$
 Define the operator $- \tilde A: D(\tilde A)\rightarrow L^2(D)$ as $-Au = g$ whenever there exist $g\in L^2(D)$ such that 
 $$\langle u, v \rangle_{\mathcal{H}^1} = \langle g, v\rangle.$$
 Then the following statement holds.
\begin{lemma} 
  The operator $\Tilde A$ generates a self-adjoint analitic semigroup of negative type $e^{\Tilde A t}$ over $L^2(D)$, $D((-\Tilde A)^{1/2})=\mathcal{H}^1$. Moreover, if $x_0$ has radial symmetry then the same holds also for $e^{\Tilde A t} x_0 \ \forall t\geq 0$.
\end{lemma}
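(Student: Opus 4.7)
The plan is to read off the three claims from the theory of closed symmetric sesquilinear forms, applied to the form $a(u,v):=\langle u,v\rangle_{\mathcal{H}^1}=\langle\nabla u,\nabla v\rangle+\langle u/|x|,v/|x|\rangle$ on $L^2(D)$ with form domain $\mathcal{H}^1$. First I would check the three hypotheses of Kato's first representation theorem. Density: $C_c^\infty(D\setminus\{0\})$ is contained in $\mathcal{H}^1$ (for such $\phi$ the weight $1/|x|^2$ is bounded on $\mathrm{supp}\,\phi$) and is dense in $L^2(D)$ since $\{0\}$ has Lebesgue measure zero. Closedness: by construction, $(\mathcal{H}^1,\langle\cdot,\cdot\rangle_{\mathcal{H}^1})$ is a Hilbert space, and the $a$-norm dominates the $L^2$-norm, so any $a$-Cauchy sequence with $L^2$-limit $u$ already converges in $\mathcal{H}^1$ to $u$. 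Positivity: by the 2D Poincar\'e inequality on the bounded domain $D$, $a(u,u)\geq\|\nabla u\|_{L^2}^2\geq \lambda_1\|u\|_{L^2}^2$ for some $\lambda_1>0$.

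Kato's first representation theorem then produces a unique self-adjoint operator $T$ on $L^2(D)$ with $D(T^{1/2})=\mathcal{H}^1$ and $\langle Tu,v\rangle_{L^2}=a(u,v)$ for $u\in D(T)$, $v\in \mathcal{H}^1$. Comparing this identity with the definition of $-\tilde A$ given just before the lemma, we get $T=-\tilde A$, and in particular $D((-\tilde A)^{1/2})=\mathcal{H}^1$. Since $-\tilde A\geq \lambda_1 I>0$, standard spectral theory shows that $\tilde A$ generates an analytic semigroup on $L^2(D)$ with $\|e^{\tilde A t}\|_{L^2\to L^2}\leq e^{-\lambda_1 t}$, i.e.\ of negative type.

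For the invariance under rotations I would introduce, for each $\theta\in[0,2\pi]$, the operator $U_\theta u(x):=u(R_\theta x)$ on $L^2(D)$. Since $R_\theta$ is an isometry of $B$ with unit Jacobian and preserves $|x|$, $U_\theta$ is unitary on $L^2(D)$, maps $\mathcal{H}^1$ to itself, and preserves the form: using $\nabla(U_\theta u)=R_\theta^\top(\nabla u)\circ R_\theta$ and unitarity of $R_\theta^\top$ on $\mathbb{R}^2$, one checks $a(U_\theta u,U_\theta v)=a(u,v)$. By uniqueness in Kato's theorem applied to the transported form (or directly by the $a$-representation of $\tilde A$), $U_\theta$ commutes with $\tilde A$ and hence with $e^{\tilde A t}$ for every $t\geq 0$. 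If $x_0$ is radial, $U_\theta x_0=x_0$ for all $\theta$, so $U_\theta e^{\tilde A t}x_0=e^{\tilde A t}U_\theta x_0=e^{\tilde A t}x_0$, proving radial symmetry of $e^{\tilde A t}x_0$.

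The argument is essentially bookkeeping: the only point that requires a moment's thought is density of $\mathcal{H}^1$ in $L^2(D)$ in the presence of the singular weight $1/|x|^2$, but this is immediate from $C_c^\infty(D\setminus\{0\})\subset \mathcal{H}^1$ and needs no Hardy-type inequality. Everything else is an application of the standard form-operator correspondence and the fact that rotations leave both the Dirichlet energy and the weight $|x|^{-2}$ invariant.
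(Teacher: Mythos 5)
Your argument is correct and complete. Note that the paper does not actually prove this lemma: it only cites the reference on circularly symmetric flows for it, so there is no in-paper proof to compare against. Your route --- realizing $-\tilde A$ as the operator associated with the closed, densely defined, coercive form $a(u,v)=\langle u,v\rangle_{\mathcal{H}^1}$ via the Kato representation theorems (the identification $D((-\tilde A)^{1/2})=\mathcal{H}^1$ being the second representation theorem rather than the first, a point of attribution only), and then deducing rotational invariance of the semigroup from the fact that the unitaries $U_\theta$ preserve both the form domain and the form --- is the standard one, and all the small points that need checking (density of $C_c^\infty(D\setminus\{0\})$ in $L^2$, closedness of the form despite the singular weight, coercivity from Poincar\'e or simply from $|x|\le 1$ on the unit ball) are handled correctly.
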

Therefore, according to \cite{da2014stochastic}, problem \eqref{auxnse} can be interpreted in mild form as
\begin{equation}\label{mild formula aux nse}
    v_t^\eps := e^{\eps \Tilde At}\chi^\eps + \int_0^t e^{\eps \Tilde A(t-s)}\bar f_sds + \sqrt{\eps}\sum_{k\in K}\int_0^t e^{\eps \Tilde A(t-s)}\sigma_k(|x|)dW_s^k
\end{equation}
We introduce the notion of weak solution of problem \eqref{auxnse}.
\begin{definition}\label{weak sol aux def}
We say that $v^\eps$ is a weak solution of \eqref{auxnse} if
$$
v^\eps \in C_{\mathcal{F}}\left([0, T] ; L^2(D)\right) \cap L^2(\Omega,\mathcal{F},\mathbb{P};L^{2}(0,T;\mathcal{H}^1)) 
$$
and for every $\phi \in D(\Tilde A)$, we have
$$
\left\langle v^\eps(t), \phi\right\rangle=\left\langle \chi ^\eps, \phi\right\rangle+\eps \int_0^t\left\langle v^\eps_s, \tilde A \phi\right\rangle d s+ \int_0^t\langle\bar f_s, \phi \rangle ds+\sqrt{\eps} \sum_{k\in K}\left\langle s_k, \phi\right\rangle W_t^k
$$
for every $t \in[0, T], \mathbb{P}-$ a.s., where $s_k=\sigma_k(|x|)$.
\end{definition} 
Thanks to \cite[Theorem 5.4]{da2014stochastic} the mild formula \eqref{mild formula aux nse} gives the unique weak solution of \eqref{auxnse}. Indeed the following hold
\begin{lemma}
    $v^\eps \in C([0, T]; L^2(D)) \cap L^2([0, T]; \mathcal{H}^1)$ and has radial symmetry. Moreover, it is a weak solution in the sense of \autoref{weak sol aux def} of equation \eqref{auxnse}.
\end{lemma}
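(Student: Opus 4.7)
The plan is to verify each of the three claims (the regularity, the radial symmetry, and the weak formulation) directly from the mild representation \eqref{mild formula aux nse}. Decompose $v^\eps_t = v^{\eps,1}_t + v^{\eps,2}_t + v^{\eps,3}_t$ into the semigroup orbit of the initial datum, the deterministic convolution of the forcing, and the stochastic convolution. The strategy is to exploit, on one hand, the smoothing properties of the analytic semigroup $e^{\eps \tilde A t}$, and on the other hand, the fact that radiality is preserved by each of the three operations.

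For the regularity, continuity with values in $L^2(D)$ for $v^{\eps,1}$ and $v^{\eps,2}$ is immediate from the strong continuity of $e^{\eps \tilde A t}$ together with $\chi^\eps \in L^2(D)$ and $\bar f \in L^2(0,T;L^2(D))$. For $v^{\eps,3}$ the path continuity in $L^2(D)$ and the square integrability follow from the classical Da Prato--Zabczyk factorization/maximal inequality for stochastic convolutions driven by analytic semigroups, which requires only the summability $\sum_{k\in K}\|\sigma_k\|^2_{L^2(D)} < \infty$ guaranteed by \autoref{hypothesis symmetry}. For the $L^2(0,T;\mathcal{H}^1)$ bound, use that $\mathcal{H}^1 = D((-\tilde A)^{1/2})$ and apply the analytic semigroup estimate $\|(-\tilde A)^{1/2}e^{\eps \tilde A t}\|_{L^2\to L^2}\leq C(\eps t)^{-1/2}$, combined with Young's convolution inequality for the deterministic terms and It\^o isometry at the level of $(-\tilde A)^{1/2}$ for the stochastic one (the resulting integral $\int_0^t (\eps(t-s))^{-1}\,ds$ is only logarithmically divergent, yet the correct computation is against the norm $\|(-\tilde A)^{1/2}e^{\eps \tilde A \cdot}\sigma_k\|_{L^2(0,T;L^2)}^2 \lesssim \eps^{-1}\|\sigma_k\|^2$, which is finite).

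For the radial symmetry, note that $\chi^\eps$ and $\bar f_s$ are radial by assumption, while $\sigma_k(|x|)$ is manifestly a function of $|x|$. The lemma preceding \eqref{mild formula aux nse} ensures that the semigroup $e^{\eps \tilde A t}$ preserves radial functions. Applying this to $\chi^\eps$ and to $\bar f_s$ makes $v^{\eps,1}_t$ and $v^{\eps,2}_t$ radial pointwise in $t$. For $v^{\eps,3}_t$, the integrand $e^{\eps \tilde A(t-s)}\sigma_k(|x|)$ is radial for every $s\leq t$ and every $k$, hence so is its stochastic integral against independent real Brownian motions. We conclude that $v^\eps_t$ is radial $\mathbb{P}$-a.s.\ for every $t\in[0,T]$.

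Finally, to obtain the weak formulation of \autoref{weak sol aux def}, proceed as in the classical mild-to-weak equivalence for analytic semigroups. Pair \eqref{mild formula aux nse} with $\phi \in D(\tilde A)$, use the self-adjointness of $\tilde A$ to move it onto the test function, and apply the deterministic and stochastic Fubini theorems to interchange the inner product with the time integrals. The identity $\langle e^{\eps \tilde A t}\chi^\eps,\phi\rangle - \langle \chi^\eps,\phi\rangle = \eps\int_0^t\langle e^{\eps \tilde A s}\chi^\eps, \tilde A\phi\rangle\,ds$ and its analogues for the two convolutions then rearrange \eqref{mild formula aux nse} into the integral identity of \autoref{weak sol aux def}. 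The main technical point, and arguably the most delicate, is the stochastic Fubini exchange for $v^{\eps,3}$; this is standard once the required integrability of $(s,\omega)\mapsto \langle e^{\eps \tilde A(t-s)}\sigma_k,\tilde A\phi\rangle$ has been checked, which follows from the smoothness of $\phi\in D(\tilde A)$ and the semigroup contractivity. Putting the three pieces together yields all the conclusions of the lemma.
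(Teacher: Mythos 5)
Your proposal is correct and follows essentially the same route as the paper, which simply delegates this lemma to the classical Da Prato--Zabczyk theory (their Theorem 5.4 for the mild-to-weak equivalence and the regularity of stochastic convolutions driven by analytic semigroups) together with the preceding lemma on the radial invariance of $e^{\eps \tilde A t}$. Your write-up just makes explicit the standard argument behind that citation, including the correct fix (spectral/isometry bounds rather than the naive $(t-s)^{-1}$ estimate) for the $L^2(0,T;\mathcal{H}^1)$ regularity of the convolution terms.
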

We introduced the problem \eqref{auxnse}, because of the representation formula for the unique solution of \eqref{Symmetry with forcing} guaranteed by the following proposition.
\begin{proposition}\label{radial nse}
The unique weak solution $u^\eps$ of the Navier-Stokes system \eqref{Symmetry with forcing} for $f=\bar f(|x|)\frac{x^\perp}{|x|} \in H_0^{RS}$, $u_0= \bar u_0(|x|)\frac{x^\perp}{|x|} \in \mathcal{E}_0^{RS}$ and with noise $W^{RS}$ is given by $\bar u^\eps(|x|)\frac{x^\perp}{|x|}$ where $\bar u^\eps$ solves equation \eqref{auxnse} with forcing $\bar f$ and initial datum $\bar u_0$
\end{proposition}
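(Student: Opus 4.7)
My plan is to take the candidate $u^\eps(t,x) := \bar u^\eps(t,|x|)\,\tfrac{x^\perp}{|x|}$, where $\bar u^\eps$ is the weak solution of the scalar auxiliary equation \eqref{auxnse}, and show that it is a weak solution of the full Navier--Stokes system \eqref{Symmetry with forcing} in the sense of \autoref{weak solution NS forcing}. Uniqueness from \autoref{thm well-posed ns forcing} will then force this $u^\eps$ to be the unique Navier--Stokes solution, which is the assertion. The required regularity $u^\eps\in C_{\mathcal F}([0,T];H)\cap L^2(\Omega;L^2(0,T;V))$ and the boundary condition $u^\eps|_{\partial B}=0$ follow immediately from the analogous properties of $\bar u^\eps$ via the polar-coordinate identities $\|u^\eps_t\|_{L^2}^2=\|\bar u^\eps_t\|_{L^2(B)}^2$ and $\|\nabla u^\eps_t\|_{L^2}^2=\|\bar u^\eps_t\|_{\mathcal{H}^1}^2$; the field is divergence-free by construction of the ansatz.

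\textbf{Two pointwise identities.} The whole argument is driven by two classical facts for circularly symmetric fields $v(r)\tfrac{x^\perp}{r}$. First, a direct polar-coordinate computation gives $u^\eps\cdot\nabla u^\eps = -(\bar u^\eps)^2(r)/r^2 \cdot x$, which is a pure radial gradient; consequently, for every $\phi\in D(A)$, integration by parts using $\div\phi=0$ and $\phi|_{\partial B}=0$ yields $b(u^\eps,\phi,u^\eps) = -\langle u^\eps\cdot\nabla u^\eps,\phi\rangle = 0$, so the convective term simply drops out of the weak formulation. Second, the vector Laplacian satisfies $\Delta\bigl(v(r)\tfrac{x^\perp}{r}\bigr) = (\Delta v - v/r^2)\,\tfrac{x^\perp}{r} = (\tilde A v)\,\tfrac{x^\perp}{r}$, identifying the restriction of the Stokes Laplacian to the circularly symmetric subspace with the scalar operator $\tilde A$ of \eqref{auxnse}.

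\textbf{Matching weak formulations and conclusion.} Given an arbitrary $\phi\in D(A)$, I would rotation-average $\phi$ to $\tilde\phi(x):=(2\pi)^{-1}\int_0^{2\pi} R_{-\theta}\phi(R_\theta x)\,d\theta$, which stays in $D(A)$ (rotations are isometries commuting with $A$) and is rotation invariant; in 2D, rotation-invariant elements of $V$ are necessarily of the form $\bar\phi(r)\tfrac{x^\perp}{r}$ by incompressibility, and the Laplacian identity above then forces $\bar\phi\in D(\tilde A)$. Because $u^\eps$, $f$ and the noise $W^{RS}$ are all circularly symmetric, every pairing appearing in the weak formulation of \eqref{Symmetry with forcing} against $\phi$ coincides with the same pairing against $\tilde\phi$; combined with the cancellation of the nonlinear term, the Navier--Stokes weak formulation tested against $\phi$ reduces, via a change to polar coordinates, to the scalar weak formulation of \eqref{auxnse} tested against $\bar\phi$, which holds by construction. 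The main technical obstacle lies precisely in this rotation-averaging step: one must check carefully that $\phi\mapsto\tilde\phi$ preserves $D(A)$ (so that no regularity is lost when passing to the symmetric subspace) and that the radial profile $\bar\phi$ of the averaged field is indeed an admissible test function for \eqref{auxnse}, so that the two weak formulations match term by term; once this is established, uniqueness delivers the identification $u^\eps = \bar u^\eps(|x|)\,\tfrac{x^\perp}{|x|}$.
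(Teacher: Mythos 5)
Your proposal is correct and rests on the same two pillars as the paper's proof — the vanishing of the convective term for circularly symmetric fields and the identification of the vector Laplacian on the symmetric subspace with the scalar operator $\tilde A$ — but it handles general test functions by a genuinely different mechanism. The paper never symmetrizes the test function: for an arbitrary divergence-free $\phi\in C^\infty_c(B\setminus\{0\};\mathbb{R}^2)$ it feeds the (generally non-radial) scalar function $\phi\cdot\frac{x^\perp}{|x|}$ into the weak formulation of \eqref{auxnse} (note that \autoref{weak sol aux def} admits arbitrary elements of $D(\tilde A)$, not only radial ones), uses the pointwise identity \eqref{lapl identity} to establish the key integration-by-parts relation \eqref{int by parts A}, and concludes by density of such $\phi$ in $H^1_0(B;\mathbb{R}^2)$. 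You instead rotation-average $\phi$, classify equivariant divergence-free fields, and only ever test the scalar equation against radial profiles; this avoids the paper's excision of the origin and the density step, at the price of the averaging lemma you correctly flag as the technical crux. Two small points there deserve care: first, an equivariant field in $V$ is purely azimuthal not by incompressibility alone — equivariance plus $\div\phi=0$ still allows a radial component $c/r$, which is killed only by the trace condition $\phi\cdot n|_{\partial B}=0$ (or square-integrability at the origin); second, the averaging identity must also be verified against the martingale term, which is immediate here since $W^{RS}$ is a sum of equivariant modes. Finally, your observation that $u^\eps\cdot\nabla u^\eps=-(\bar u^\eps)^2 x/|x|^2$ is a radial gradient is exactly the paper's computation of its term $I_2$ (with the same antiderivative $V$); the paper's additional term $I_1$ appears only because it works with the transposed form $b(u^\eps,\phi,u^\eps)$ appearing in \autoref{weak solution NS forcing} rather than with $\langle u^\eps\cdot\nabla u^\eps,\phi\rangle$ directly.
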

\begin{proof}
We begin by proving that for every $\phi \in C^\infty_c(D \setminus \{0\};\mathbb{R}^2)$, $b\left(\bar u^\eps_s(x)\frac{x^\perp}{|x|}, \phi, \bar u^\eps_s(x)\frac{x^\perp}{|x|}\right)=0$.
\begin{align*}
    \int_D |\bar u^\eps_s(|x|)|^2\frac{x^\perp}{|x|^2}\cdot (x^\perp \cdot \grad\phi) &=  \int_D |\bar u^\eps_s(|x|)|^2\frac{x^\perp}{|x|^2}\cdot (\nabla (x^\perp\cdot \phi) + \phi^\perp) \\
    &= \int_D \frac{|\bar u^\eps_s(|x|)|^2}{|x|^2} (x^\perp \cdot (\nabla (x^\perp\cdot \phi)) + x\cdot \phi) = I_1 + I_2
\end{align*}
Now we have 
\begin{align*}
    I_1=  \int_D \frac{|\bar u^\eps_s(|x|)|^2}{|x|^2} \div (x^\perp (x^\perp\cdot \phi)) = -\int_D \nabla \Big[\frac{|\bar u^\eps_s(|x|)|^2}{|x|^2}\Big]\cdot x^\perp (x^\perp\cdot \phi) = 0
\end{align*}
since the gradient of a radial function is always parallel to $x$.
While, if we define $V(\rho)= \int_0^\rho  \frac{|\bar u^\eps_s(r)|^2}{r}dr $, we have
\begin{align*}
    I_2 = \int_D  \frac{|\bar u^\eps_s(|x|)|^2}{|x|^2} x\cdot \phi = \int_D \nabla V(|x|)\cdot \phi = -\int_D V(|x|)\div\phi = 0.
\end{align*}
Next, we observe that $\phi\cdot \frac{x^\perp}{|x|}\in D(\tilde A)$. Since we want to prove that, neglecting the non-linear term which is zero,  
\begin{align}\label{weak ns def in proof}
    &\langle u^{\eps}_t-u_0,\phi\rangle+\int_0^t \eps \langle \nabla u^{\eps}_s,\nabla \phi\rangle ds = \int_0^t \langle f_s,\phi \rangle ds +\sqrt{\eps}\sum_{k\in K}\langle \sigma_k \frac{x^\perp}{|x|},\phi\rangle.
\end{align} 
we rewrite this as
\begin{align*}
    \int_B [\bar u^\eps_t(|x|)-\bar u_0(|x|)]\phi(x) \cdot\frac{x^\perp}{|x|}dx+ &\int_0^t\int_B \eps  \nabla [\bar u^{\eps}_s(|x|)\frac{x^\perp}{|x|}]\cdot\nabla \phi(x)dxds \\ &= \int_0^t \int_B  \bar f_s(|x|)\phi(x)\cdot \frac{x^\perp}{|x|} dx ds +\sqrt{\eps}\Big(\sum_{k\in K}\int_B \sigma_k(|x|)\phi(x) \cdot\frac{x^\perp}{|x|}dx\Big) W_t^k.
\end{align*} 
which, comparing with the \autoref{weak sol aux def}, holds true if we prove that 
\begin{equation}\label{int by parts A}
    \int_0^t \int_B \nabla [\bar u^{\eps}_s(|x|)\frac{x^\perp}{|x|}]\cdot \nabla \phi(x)dxds = \int_0^t\int_B (-\Tilde A)^{1/2}\bar u^\eps_s(|x|) (-\Tilde A)^{1/2} (\phi(x)\cdot\frac{x^\perp}{|x|})dxds.\end{equation}
This can be proven with simple calculations, upon noticing that \begin{align}\label{lapl identity}
    \Delta(\phi(x)\cdot\frac{x^\perp}{|x|})= \Delta(\phi(x))\cdot \frac{x^\perp}{|x|} + \phi(x)\cdot \frac{x^\perp}{|x|^3}-2\div\left(\frac{x\cdot\phi}{\lvert x\rvert^3}x^{\perp}\right).
\end{align}
Finally, we obtain that $u^\eps$ is a weak solution of \eqref{Symmetry with forcing} by observing that the closure of $ C^\infty_c(B \setminus \{0\};\mathbb{R}^2)$ vectors field in the $H^1$ norm, is exactly $H^1_0(B;\mathbb{R}^2)$, which implies that \eqref{weak ns def in proof} holds in particular for every $\phi \in D(A)$. 
\end{proof}
In the same manner we can prove the analogous result for the Euler system, that is 
\begin{proposition}
    Given $\chi \in \E_0^{RS}$ the unique solution of the system \eqref{euler forcing} in $C([0, T]; W^{2,4}(B; \mathbb{R}^2))\cap C([0, T]; H)$ is given by $\bar u(|x|)\frac{x^\perp}{|x|}$ where the radial function $u$ is given by 
    $$\bar u(|x|):= \chi(|x|) + \int_0^t f_s(|x|)ds.$$
\end{proposition}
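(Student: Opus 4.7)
The plan is to check directly that $u(t,x):=\bar u(t,|x|)\frac{x^\perp}{|x|}$, with $\bar u(t,r):=\bar\chi(r)+\int_0^t \bar f_s(r)\,ds$ and $\bar\chi$, $\bar f_s$ the radial parts of $\chi$ and $f_s$, is a weak solution of \eqref{euler forcing} belonging to $C([0,T];W^{2,4}(B;\mathbb{R}^2))\cap C([0,T];H)$. Uniqueness will then follow from \autoref{thm well posed euler forcing}.

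First I would verify the regularity. Since $\chi\in\mathcal{E}_0^{RS}\subseteq H^3$ and, by \autoref{hypothesis symmetry}, $f\in L^2(0,T;H_0^{RS})\subseteq L^2(0,T;D((-A)^\gamma))$ with $\gamma\ge 2$, the circularly symmetric Bochner integral defining $u-\chi$ lies in $C([0,T];H^3)$, so $u\in C([0,T];H^3)\hookrightarrow C([0,T];W^{2,4})$ by the Sobolev embedding in two dimensions. The circularly symmetric ansatz is automatically divergence-free and tangent to $\partial B$, so $u\in C([0,T];H)$ as well.

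The heart of the argument is the vanishing of the convective term, i.e.\ $b(u_s,\phi,u_s)=0$ for every $\phi\in C_c^\infty(D;\mathbb{R}^2)$. For $\phi\in C_c^\infty(D\setminus\{0\};\mathbb{R}^2)$ this identity follows from the same algebraic computation already carried out in the proof of \autoref{radial nse}: the circular symmetry of $u_s$, together with the observations that gradients of radial functions are parallel to $x$ and that the term $x\cdot\phi$ appearing after expanding $x^\perp\cdot(x^\perp\cdot\nabla\phi)$ can be rewritten as $\nabla V(|x|)\cdot\phi$ for a radial potential $V$, kills both of the integrals $I_1$ and $I_2$ there. This step depends only on the structure of $u_s$ and transfers verbatim to the Euler setting, as it involves neither viscosity nor noise. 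Density of $C_c^\infty(D\setminus\{0\};\mathbb{R}^2)$ in $H_0^1(D;\mathbb{R}^2)$ in two dimensions, which was already used in \autoref{radial nse}, extends the identity to all $\phi\in C_c^\infty(D;\mathbb{R}^2)$.

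Finally, the affine time-dependence of $\bar u$ gives, after pairing with any test $\phi$ and using the radial decomposition, $\langle u_t-u_0,\phi\rangle=\int_0^t\langle f_s,\phi\rangle\,ds$; combined with $b(u_s,\phi,u_s)\equiv 0$ this is exactly the weak formulation of \autoref{weak euler}. I do not foresee any essential obstacle: the algebraic cancellation of the nonlinearity for circularly symmetric flows, which is the only non-trivial ingredient, has already been established in \autoref{radial nse}; what remains is a routine regularity check and the density step needed to pass from test functions vanishing near the origin to all of $C_c^\infty(D;\mathbb{R}^2)$.
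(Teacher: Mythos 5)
Your proposal is correct and follows exactly the route the paper intends: the paper gives no separate proof of this proposition, stating only that it is proved "in the same manner" as \autoref{radial nse}, and your argument spells out precisely that — the algebraic cancellation of $b(u_s,\phi,u_s)$ for circularly symmetric fields established there (which uses neither viscosity nor noise), the density of fields supported away from the origin in $H^1_0$, a routine regularity check, and uniqueness from \autoref{thm well posed euler forcing}. No gaps.
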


\subsubsection{Condition 2}\label{subsubsec:cond1 Symmetric}
In this section we prove that the second condition in the weak convergence approach is easily fulfilled unconditionally in the case of fluids with radial symmetries. Since the proof of the validity of Condition 1 in \autoref{conditions weak convergence approach} and the lower semicontinuity of the map $u_0\rightarrow I_{u_0}^{RS}(v)$ presented in \autoref{sec:Kato assumption} is valid also in this framework, this is enough to prove \autoref{main thm LDP NS radial symmetry}.
Let $\chi^\eps \rightarrow \chi$ in $\E_0^{RS}$, which we recall being endowed with the $H^3$ norm and $f^\eps \rightarrow f$ in law as $S^N(H_0^{RS})$-random variables. We will show that for each sequence $\eps_n\rightarrow 0$, $\mathcal{G}^{\eps_n,RS}\big(\chi^{\eps_n}, {\eps_n}^{1/2} W + \int_0^{\cdot} f^{\eps_n}_s ds\big)$ converges in law to $\mathcal{G}^{0,RS}(\chi,\int_0^\cdot f_s ds)$ in the topology of $\E$. This implies the validity of the second condition in \autoref{conditions weak convergence approach}. In order to simplify the notation, we will consider $\eps>0$ in the following dropping the subscript $\eps_n$, having in mind it is a countable family. Arguing as in \autoref{subsubsec:cond2 Kato}, up to passing to a different filtered probability space, by Skorohod's representation Theorem, we can assume that the convergence of the $f^\eps$ is a.s. in the weak topology of $L^2(0, T; D((-A)^{1/2}))$ (since, by construction, we have the embedding $H_0^{RS} \rightarrow D((-A)^{1/2} )$). We call $u^\eps = \G^\eps(\chi^\eps,\int_0^\cdot f_s^\eps ds +\sqrt{\eps}W)$ and $u=\G^0(\chi, \int_0^\cdot f_s ds)$. From the results in the previous section, we have that both $u^\eps$ and $u^0$ have circular symmetry, and their radial parts are given by 
$$ \bar u_t^\eps := e^{\eps \Tilde At}\chi^\eps + \int_0^t e^{\eps \Tilde A(t-s)}\bar f^{\eps}_sds + \sqrt{\eps}\sum_{k\in K}\int_0^t e^{\eps \Tilde A(t-s)}\sigma_k(|x|)dW_s^k$$
$$\bar u_t= \chi_0 + \int_0^t\bar f_s ds$$
Actually we will prove the stronger result:
\begin{align}\label{convergence radial }
  \EE\left[ \sup_{t\in[0,T]} \lVert u^\eps_t - u_t\rVert^2 \right]\rightarrow 0
\end{align}
in the probability space defined via Skorohod representation theorem.
From our representation formula, it is sufficient to show 
$$\EE\left[ \sup_{t\in[0,T]} ||\bar u^\eps - \bar u||^2\right]\rightarrow 0 $$
To do so we write 
\begin{align}\label{difference solutions}
    \bar u^\eps_t - \bar u_t = & e^{\eps \Tilde At}(\chi^\eps -\chi) + (e^{\eps \Tilde At} - I)\chi + \int_0^t e^{\eps \Tilde A(t-s)}(\bar f^\eps_s - \bar{f}_s)ds \notag \\ + &\int_0^t (e^{\eps \Tilde A(t-s)} - I)\bar f_s +\sqrt{\eps}\sum_{k\in K}\int_0^t e^{\eps \Tilde A(t-s)}\sigma_k(|x|)dW_s^k.
\end{align}
Preliminarily we observe that in the proof of  \autoref{radial nse}, we showed equality \eqref{int by parts A} for every vector field $\phi(x) \in H^1_0$. If we disregard the time integration and let $\bar u_s^\eps$ in \eqref{int by parts A} be any generic radial function in $\HH^1$ we obtain that the map $J:\HH_R^1 \rightarrow D((-A)^{1/2})$ that sends any radial function $v(|x|)$ in $\HH^1$ to the circular symmetric vector field $v(|x|)\frac{x^\perp}{|x|}$ is an isometry (where we indicated with $\HH^1_R$ the set of radial function of $\HH^1$). We then obtain 
$$\lVert f^{\eps}\rVert_{D((-A)^{1/2})}=\|\bar f_s^\eps(|x|)\frac{x^\perp}{|x|}\|_{D((-A)^{1/2})}= \| \bar f_s^\eps(|x|)\|_{\HH^1},\quad \lVert f\rVert_{D((-A)^{1/2})}=\|\bar f_s(|x|)\frac{x^\perp}{|x|}\|_{D((-A)^{1/2})}= \| \bar f_s(|x|)\|_{\HH^1} $$
which gives exactly 
\begin{align}\label{uniform bound radial forcing}
 \int_0^T\|\bar f_s\|^2_{\HH^1}+\sup_{\eps>0}\int_0^T\|\bar f^{\eps}_s\|^2_{\HH^1} \leq  C(N).   
\end{align}
Moreover $\bar f^{\eps}\rightharpoonup \bar f$ in $L^2(0,T;\mathcal{H}^1)\quad \mathbb{P}-a.s.$ Now we can treat $\overline{u}^{\eps}-\overline{u}$.
The first and second terms in \eqref{difference solutions} go to zero in $L^2$ norm thanks to the strong convergence of $\chi^\eps$ to $\chi$ and the continuity of the semigroup. The convergence is uniform in time since, for the first term $\|\sup_{t\le T}e^{\eps \Tilde At}(\chi^\eps -\chi)\| \le \|(\chi^\eps -\chi)\|$ while for the second, we choose for every $\eps$, $t_{\eps}$ for which $\|(e^{\eps \Tilde At_{\eps}} - I)\chi\|$ achieves its maximum over $[0, T]$. Then by observing that $t_{\eps}\leq T$, we get that as $\eps \rightarrow 0$, $\eps t_{\eps}\rightarrow 0$, and we conclude by the continuity of the semigroup.
The stochastic integral term can be easily controlled using the It\"o formula and Burkolder-Davis-Gundy inequality for Stochastic Convolutions, see for example \cite{Seidler2010}, obtaining
$$\EE\left[\sup_{t\le T}\|\sqrt{\eps}\sum_{k\in K}\int_0^t e^{\eps \Tilde A(t-s)}\sigma_k(|x|)dW_s^k\|^2 \right] \le 2T\sqrt{\eps}\EE\left[\sup_{t\le T}\|\bar u^\eps_s\|^2\right] \left(\sum_{k\in K}\|\sigma_k\|^2\right)$$
which converges to zero as all the quantities are bounded. The remaining terms can be treated similarly to \autoref{convergence forcing}.
In order to study the third term, call $\bar F_t^\eps=\int_0^t \bar f_s^\eps ds $ and $\bar F_t=\int_0^t \bar f_s ds$. By \cite[Proposition 26]{flandoli2023stochastic}, \begin{align}\label{convergence integral forcing radial}
    \bar F^{\eps}\rightarrow \bar F \quad in\  C([0,T];L^2)\quad \mathbb{P}-a.s.
\end{align}  The third term in \eqref{difference solutions} can can be rewritten as 
$$ \int_0^t e^{\eps \Tilde A(t-s)}(\bar f^\eps_s - \bar f_s)ds = \eps \int_0^t \Tilde A e^{\eps \Tilde A(t-s)}(\bar F^\eps_s - \bar F_s)ds + (\bar F^\eps_t - \bar F_t).$$ The second term above converges to $0$ $\mathbb{P}-a.s.$ and in $L^2(\Omega,\mathbb{P})$ thanks to \eqref{uniform bound radial forcing} and \eqref{convergence integral forcing radial}.
Concerning the other we use standard properties of analytic semigroup, see for example \cite[Chapter 2]{pazy2012semigroups}, and \eqref{uniform bound radial forcing} obtaining 
\begin{align*}
    \EE\left[\sup_{t\in [0,T]}\lVert \eps \int_0^t \Tilde A e^{\eps \Tilde A(t-s)}(\bar F^\eps_s - \bar F_s)ds \rVert^2\right]&\leq C\eps \mathbb{E}\left[\operatorname{sup}_{t\in [0,T]}\left(\int_0^t \frac{\lVert \bar{F}^{\eps}-\bar F\rVert_{C([0,T];\mathcal{H}^1)}}{(t-s)^{1/2}}ds\right)^2\right]\\ & \leq C(N,T)\eps\rightarrow 0.
\end{align*}
Finally, for the fourth term, by standard properties of analytic semigroups, see for example \cite[Chapter 2]{pazy2012semigroups}, and our assumption on $f$, see \eqref{uniform bound radial forcing},  
\begin{align*}
    \sup_{t\in [0, T]}\|\int_0^t (e^{\eps \Tilde A(t-s)} - I)\bar f_s ds\|^2 & \le  \left(\sup_{t\in [0, T]}\int_0^t\|(e^{\eps \Tilde A(t-s)} - I)\bar f_s\| ds\right)^2  \\ 
    &\le C\eps\left(\sup_{t\in [0, T]}\int_0^t (t-s)^{1/2}\|\bar f_s\|_{\mathcal{H}^1} ds\right)^2\\ & \le C(N,T)\eps \quad \mathbb{P}-a.s.
\end{align*}
Therefore we get
 $$\EE \Big[\sup_{[0, T]}\|\int_0^t (e^{\eps \Tilde A(t-s)} - I)\bar f_s ds\|^2\Big] \rightarrow 0. $$
 This prove the validity of relation \eqref{convergence radial }.
\begin{remark}
    According to \autoref{remark inverse kato}, computation above implies in particular that \autoref{strong kato hp} are satisfied in this framework.
\end{remark}
\section{Second-Grade Fluids}\label{sec:second Grade}
We observe that the proof of the validity of Condition 1 of \autoref{conditions weak convergence approach} presented in \autoref{sec:Kato assumption} holds also in this framework. Moreover the same is true also for the lower continuity of the map $I_{u_0}^{SG}(v)$ for $v\in \mathcal{E}$ fixed. Therefore, in order to prove \autoref{main thm LDP 2GF}, it is enough to show the validity of Condition 2 of \autoref{conditions weak convergence approach}.

\subsection{Condition 2}\label{subsubsec:cond1 Second Grade}
We argue similarly to the proof of the validity Condition 2 in \autoref{sec:Kato assumption}. Fix $N>0$, let $\tilde{f}^{\eps},\Tilde{f}\in \mathcal{P}_2^N,\ u_0^{\eps},\ u_0\in \mathcal{E}_0^{SG}$ such that $\Tilde{f}^{\eps}\rightarrow_{\mathcal{L}}\Tilde{f}$ weakly in $L^2(0,T;H_0),\ u_0^{\eps}\rightarrow u_0$ in $\mathcal{E}_0^{SG}.$ We will show that for each sequence $\eps_n\rightarrow 0,\nu_n\rightarrow 0$ s.t. $\nu_n=O(\eps_n)$, $\mathcal{G}^{\eps_n,SG}\big(u_0^{\eps_n}, {\eps_n}^{1/2} W + \int_0^{\cdot} f^{\eps_n}_s ds\big)$ converges in law to $\mathcal{G}^{0,SG}(u_0,\int_0^\cdot u_s ds)$ in the topology of $\E$. This implies the validity of the second condition in \autoref{conditions weak convergence approach}. In order to simplify the notation, we will consider $\eps>0,\ \nu>0$ in the following dropping the subscript $\eps_n$, $\nu_n$, having in mind they are countable families. Since $S^N$ is a Polish space, by Skorokhod's representation theorem we can introduce a further filtered probability space $(\Tilde{\Omega},\Tilde{\mathcal{F}},\Tilde{\mathcal{F}}_t,\Tilde{\mathbb{P}})$ and random variables $f^{\eps}, W^{\eps},f$ such that $(f^{\eps},W^{\eps})$ has the same joint law of $(\Tilde{f}^{\eps},W)$, $f$ has the same law of $\Tilde{f}$ and $f^{\eps}\rightarrow_{\Tilde{\mathbb{P}}-a.s.} f$ in $L^2(0,T;H_0)$, see for example \cite{flandoli2023stochastic} for details. Thanks to \autoref{thm well-posed sf forcing} for each $\epsilon$ we can define $u^{\epsilon}$ as the unique solution of \eqref{second grade with forcing} with forcing term $f^{\eps}$, initial condition $u_0^{\eps}$ and Brownian forcing term $W^{\eps}$. Moreover, by \autoref{thm well posed euler forcing} we can define ${u}^E$ as the unique regular solution of \eqref{euler forcing} with forcing term $f$ and initial condition $u_0.$ We will show that $u_0^{\eps}$ converges to ${u}^E$ in probability in $C([0,T];H)$. This implies the validity of Condition 2.\\
Before starting with the computation we recall some facts. In the following, with some abuse of notation, we will simply use $\mathbb{P},\ \mathbb{E}$ instead of $\Tilde{\mathbb{P}},\ \Tilde{\mathbb{E}}.$ Fix $\theta>0$ arbitrarily small and define $F^{\eps}(t)=\int_0^t f^{\eps}_s ds,\ F(t)=\int_0^t f_s ds$. By hypothesis \autoref{hypothesis noise}, $H_0\hookrightarrow D((-A)^2)$. This implies, see for example \cite[Proposition 26]{flandoli2023stochastic} that 
\begin{align}\label{convergence integral forcing SG}
    F^{\eps}\rightarrow_{{\mathbb{P}}-a.s.} F \quad\textit{in }C([0,T];D((-A)^{\gamma-\theta})).
\end{align} 
Obviously
\begin{align}\label{uniform bound integral forcing SG}
\operatorname{sup}_{\eps>0}\lVert F^{\eps}\rVert_{C([0,T];D((-A)^\gamma))}+\lVert F\rVert_{C([0,T];D((-A)^\gamma))}\leq C(N)\quad {\mathbb{P}}-a.s.  
\end{align}
Starting from \eqref{ITO SG 1} and \eqref{ITO SG 2}, under \autoref{Second Grade Fluids assumption}, Burkholder-Davis-Gundy inequality, Gr\"onwall's lemma and the convergence of $u_0^{\eps}$ to $u_0$ imply
\begin{align}
    &\mathbb{E}\left[\operatorname{sup}_{t\in  [0,T]}\lVert u^{\eps}_t\rVert^2\right]+\eps\mathbb{E}\left[\operatorname{sup}_{t\in  [0,T]}\lVert \nabla u^{\eps}_t\rVert^2\right]+2\nu \mathbb{E}\left[\int_0^T \lVert \nabla u^{\eps}_s\rVert^2 ds\right]\leq C(N,u_0) \label{energy estimate second grade 1 },\\ 
    &\eps^3\mathbb{E}\left[\operatorname{sup}_{t\in  [0,T]}\lVert u^{\eps}_t\rVert_{H^3}^2\right]\leq C(N,u_0)  \label{energy estimate second grade 2},
\end{align}
see \cite[Section 6]{luongo2022inviscid} for the details.
In order to show the convergence of $u^{\eps}$ to $u^E$ we will introduce
 \begin{align*}
     z^{\eps}=\int_0^t e^{\nu (I-\eps A)^{-1}A (t-s)}(I-\eps A)^{-1}f^{\eps}_sds
 \end{align*} which is the mild solution of 
\begin{align*}
    d(I-\eps A)z^{\eps}=\nu A z^{\eps}+f^{\eps},
\end{align*} 
$ v^{\eps}=u^{\eps}-z^{\eps},\ v^E=u^E-F$ and show separately the convergence of $z^{\eps}$ to $F$ and of $v^\eps$ to $v^E.$ We recall that the the operators $A$ and $(I-\eps A)^{-1}$ commute on $D((-A)^{\alpha})$ for each $\alpha\in \mathbb{R}$. Moreover \begin{align*}
    (I-\eps A)^{-1}A: D((-A)^{\alpha})\rightarrow D((-A)^{\alpha})
\end{align*} is a linear, bounded operator for each $\alpha\in \mathbb{R}$ with operatorial norm equal to $\frac{1}{\eps}$.
We start with the convergence of $z^{\eps}$ to $F$.
\begin{lemma}\label{convergence forcing second grade}
    For each $\theta>0$, $z^{\eps}(t)\rightarrow F$ in $C([0,T];D((-A)^{\gamma-2\theta}))\quad \mathbb{P}-a.s.$ and in $L^2(\Omega,\mathbb{P})$.
\end{lemma}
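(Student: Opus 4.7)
\textbf{Plan for the proof of \autoref{convergence forcing second grade}.} The strategy mirrors that of \autoref{convergence forcing}, the key difference being that $\nu(I-\eps A)^{-1}A$ does not fit the analytic semigroup estimates used there; I would replace those by direct spectral computations using \autoref{eigenfunction Stokes}. Throughout, set $C_\eps:=(I-\eps A)^{-1}$, $B_\eps:=(I-\eps A)^{-1}A$, and $S_\eps(\tau):=e^{\nu B_\eps \tau}C_\eps$ so that $z^\eps_t=\int_0^t S_\eps(t-s)f^\eps_s\,ds$. Since $A$ is self-adjoint negative with spectrum $\{-\lambda_k\}$, both $C_\eps$ and $e^{\nu B_\eps \tau}$ are simultaneously diagonalised, act on $e_k$ as multiplication by $(1+\eps\lambda_k)^{-1}$ and $\exp(-\nu\lambda_k\tau/(1+\eps\lambda_k))$ respectively, and are in particular contractions on every $D((-A)^\alpha)$.

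First I would split
\[
z^\eps_t - F_t = \bigl(z^\eps_t - F^\eps_t\bigr) + \bigl(F^\eps_t - F_t\bigr),
\]
where the second term tends to zero in $C([0,T];D((-A)^{\gamma-\theta}))$ $\mathbb{P}$-a.s.\ by \eqref{convergence integral forcing SG}, a fortiori in $D((-A)^{\gamma-2\theta})$; combined with the deterministic uniform bound \eqref{uniform bound integral forcing SG}, dominated convergence upgrades this to $L^2(\Omega,\mathbb{P})$. For the first term I would further decompose
\[
z^\eps_t - F^\eps_t = \int_0^t [e^{\nu B_\eps(t-s)}-I]C_\eps f^\eps_s\,ds + \int_0^t (C_\eps - I)f^\eps_s\,ds.
\]

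The core of the argument is the pair of elementary spectral bounds, valid for $0<\eta<1$ and $x\geq 0$,
\[
\frac{x}{1+x}\leq x^\eta, \qquad |e^{-x}-1|\leq x^\eta,
\]
applied with $x=\eps\lambda_k$ and $x=\nu\lambda_k\tau/(1+\eps\lambda_k)$ respectively. With $\eta=2\theta$ (taking $\theta$ small enough that $2\theta<1$; otherwise work with $\min(2\theta,1/2)$) this yields
\[
\|(C_\eps-I)f\|_{D((-A)^{\gamma-2\theta})}\leq \eps^{2\theta}\|f\|_{D((-A)^\gamma)},\quad \|[e^{\nu B_\eps\tau}-I]C_\eps f\|_{D((-A)^{\gamma-2\theta})}\leq (\nu\tau)^{2\theta}\|f\|_{D((-A)^\gamma)}.
\]
Integrating in $s\in[0,t]$, applying Cauchy--Schwarz, and using that $f^\eps\in\mathcal{P}_2^N$ together with $H_0\hookrightarrow D((-A)^\gamma)$ guarantees $\|f^\eps\|_{L^2(0,T;D((-A)^\gamma))}\leq N^{1/2}$ $\mathbb{P}$-a.s., I obtain
\[
\sup_{t\in[0,T]}\|z^\eps_t - F^\eps_t\|_{D((-A)^{\gamma-2\theta})}\leq C(T,\theta,N)\bigl(\eps^{2\theta}+\nu^{2\theta}\bigr),
\]
which tends to zero thanks to \autoref{Second Grade Fluids assumption}. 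The $\mathbb{P}$-a.s.\ convergence and the uniform deterministic bound then yield the $L^2(\Omega,\mathbb{P})$ convergence via dominated convergence.

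The main obstacle I anticipate is not the estimate itself but correctly handling the two small parameters $\eps$ and $\nu$ together: one must verify that the spectral contraction properties of $C_\eps$ are not destroyed by the semigroup $e^{\nu B_\eps\tau}$, and that the product of these operators still allows one to trade $2\theta$ derivatives for a power of $\eps+\nu$. The spectral diagonalisation sidesteps both issues cleanly, which is why I would avoid a purely semigroup-based argument in the spirit of \autoref{convergence forcing}.
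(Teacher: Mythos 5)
Your argument is correct, and it is close in spirit to the paper's proof: both rest on the functional calculus for the self-adjoint operator $A$, trading $\theta$ (or $2\theta$) powers of $-A$ for positive powers of $\eps$ and $\nu$, and both conclude by combining a deterministic, $\omega$-uniform rate for $z^\eps-F^\eps$ with the a.s.\ convergence $F^\eps\to F$ from \eqref{convergence integral forcing SG} and dominated convergence for the $L^2(\Omega,\mathbb{P})$ statement. The bookkeeping differs, though. The paper first integrates by parts in time, rewriting $z^\eps_t=\nu\int_0^t(I-\eps A)^{-2}Ae^{\nu(I-\eps A)^{-1}A(t-s)}F^\eps_s\,ds+(I-\eps A)^{-1}F^\eps_t$, so that all estimates are run against the primitive $F^\eps$ and its a.s.\ uniform bound \eqref{uniform bound integral forcing SG} in $C([0,T];D((-A)^\gamma))$; the semigroup correction is then controlled through the operator bound $\nu\lVert(-A)^{1-\theta}(I-\eps A)^{-2}\rVert\lesssim\nu\eps^{\theta-1}$, which is where \autoref{Second Grade Fluids assumption} ($\nu=O(\eps)$) is invoked to get a rate $\eps^\theta$. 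You instead keep the convolution against $f^\eps$ itself, split $e^{\nu B_\eps\tau}C_\eps-I=[e^{\nu B_\eps\tau}-I]C_\eps+(C_\eps-I)$, bound each piece by the elementary inequalities $\tfrac{x}{1+x}\le x^\eta$ and $|e^{-x}-1|\le x^\eta$, and close with Cauchy--Schwarz in time and the a.s.\ bound $\int_0^T\lVert f^\eps_s\rVert_{H_0}^2\,ds\le N$. This avoids the integration-by-parts identity altogether and yields the slightly cleaner rate $\eps^{2\theta}+\nu^{2\theta}$, which in fact does not even need $\nu=O(\eps)$ but only $\nu\to 0$. Both routes are equally rigorous; yours is marginally more self-contained, the paper's is more parallel to its Navier--Stokes counterpart \autoref{convergence forcing}.
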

\begin{proof}
$z^{\eps}$ can be rewritten as 
    \begin{align*}
        z^{\eps}_t=\nu\int_0^t (I-\eps A)^{-2}A e^{\eps (I-\eps A)^{-1}A (t-s)}F^{\eps}_s ds+(I-\eps A)^{-1}F^{\eps}_t=I_1+I_2
    \end{align*}
    Let us show that $I_2\rightarrow F$ and $I_1 \rightarrow 0$ properly. We start from $I_2:$
    \begin{align*}
        I_2=((I-\eps A)^{-1}F^{\eps}_t-F^{\eps}_t)+F^{\eps}_t=I_{2,1}+I_{2,2}
    \end{align*}
    Now $I_{2,2}\rightarrow F_t$ in $C([0,T];D((-A)^{\gamma-\theta}))$ $\mathbb{P}-a.s.$ and  $L^2(\Omega,\mathbb{P})$ by \eqref{convergence integral forcing SG} and \eqref{uniform bound integral forcing SG}. For what concerns $I_{2,1}$ we have
    \begin{align*}
        \operatorname{sup}_{t\in [0,T]}\lVert (-A)^{\gamma-2\theta}I_{2,1}\rVert&= \operatorname{sup}_{t\in [0,T]}\lVert  (-A)^{\gamma-2\theta}(F^{\eps}_t-(I-\eps A)^{-1}F^{\eps}_t)\rVert\\ & =\operatorname{sup}_{t\in [0,T]}\lVert (-A)^{\gamma-2\theta}\left(\frac{I}{\eps}-A\right)^{-1}A F^{\eps}_t\rVert \\ & = \operatorname{sup}_{t\in [0,T]}\lVert (-A)^{1-\theta}\left(\frac{I}{\eps}-A\right)^{-1}\rVert \lVert (-A)^{\gamma-\theta}F^{\eps}_t\rVert \\ & \leq \eps^{\theta}\frac{\left(\frac{1-\theta}{\theta}\right)^{1-\theta}}{1+\frac{1-\theta}{\theta}}C(N,T)\rightarrow 0 \ \mathbb{P}-a.s.
    \end{align*}
    Since previous bound is uniform in $\omega\in \Omega$, previous inequalities imply also the convergence in $L^2(\Omega,\mathbb{P})$.
   For what concerns $I_1$ we have \begin{align*}
        \operatorname{sup}_{t\in [0,T]} \lVert (-A)^{\gamma-\theta}I_1\rVert&\leq \nu \operatorname{sup}_{t\in [0,T]}\int_0^t \lVert (-A)^{1-\theta}(I-\eps A)^{-2}e^{\eps(I-\eps A)^{-1}A(t-s)}(-A)^{\gamma}F^{\eps}_s\rVert ds \\ & \leq \eps^{\theta}\frac{\left(\frac{1-\theta}{\theta+1}\right)^{\frac{1-\theta}{2}}}{1+\frac{1-\theta}{\theta+1}}C(N,T) \rightarrow 0  \ \mathbb{P}-a.s.
    \end{align*}
       Since previous bound is uniform in $\omega\in \Omega$, previous inequalities imply also the convergence in $L^2(\Omega,\mathbb{P})$. Combining the convergence of $I_1$, $I_{2,1}$ and $I_{2,2}$ the thesis follows.
\end{proof}
\begin{remark}\label{remark uni bound sg aux}
    Since $H_0\hookrightarrow D((-A)^{3/2})\hookrightarrow W$, $z^{\eps}$ satisfies relations \eqref{energy estimate second grade 1 }, \eqref{energy estimate second grade 2}. We show a stronger relation. Indeed it holds:
    \begin{align}\label{uniform energy a.s. zeps}
        \operatorname{sup}_{t\in [0,T]}\lVert (-A)^{3/2}z^{\eps}_t\rVert&\leq \operatorname{sup}_{t\in [0,T]} \int_0^t \lVert (-A)^{3/2}e^{\nu (I-\eps A)^{-1}A (t-s)}(I-\eps A)^{-1}f^{\eps}_s\rVert ds \notag \\ & \leq \operatorname{sup}_{t\in [0,T]} \int_0^t \lVert (-A)^{3/2}f^{\eps}_s\rVert ds \notag\\ & \leq C(N,T) \quad \mathbb{P}-a.s.
    \end{align}
    Since previous bound is uniform in $\omega\in \Omega$, we have also 
    \begin{align}\label{uniform energy L2}
        \mathbb{E}\left[\operatorname{sup}_{t\in [0,T]}\lVert (-A)^{3/2}z^{\eps}_t\rVert^2\right]\leq C(N,T).
    \end{align} 
\end{remark}
In order to prove the convergence of $v^{\eps}$ to $v^E$ we observe that they solve in a sense analogous to \autoref{weak solution second grade forcing}, \autoref{weak euler}
 \begin{align}\label{auxiliary eps second grade}
   d(v^{\eps}-\eps \Delta v^{\eps})=(\nu \Delta v^{\eps}-\operatorname{curl}(v^{\eps}+z^{\eps}-\eps \Delta (v^{\eps}+z^{\eps}))\times (v^{\eps}+z^{\eps})+\nabla q^{\eps})dt+\sqrt{\eps}dW_t
\end{align}
and \eqref{auxiliary 0}.
By triangle inequality and the uniform bound guaranteed by \autoref{remark uni bound sg aux}, 
 estimates analogous to \eqref{energy estimate second grade 1 }, \eqref{energy estimate second grade 2}, \eqref{Energy Euler } hold for $v^{\eps}$ and $v^E$, too. Moreover $v^E$ satisfies \eqref{energy derivative time vE}. Again, we introduce the corrector of the boundary layer $v$ of width $\delta=\delta(\eps)$, i.e. a divergence free vector field with support in a strip of the boundary of width $\delta$ such that $v^E-v\in V$ and $\mathbb{P}-a.s.$ uniformly in $t\in [0,T],\ \omega\in \Omega$ \begin{align}\label{property bl corrector 2}
   & \lVert v_t\rVert_{L^{\infty}(D)}\leq C(N,u_0), \ \lVert v_t\rVert\leq C(N,u_0)\delta^{\frac{1}{2}}, \ \lVert\partial_t v_t\rVert\leq C(N,u_0)\delta^{\frac{1}{2}},\notag\\ & \lVert\nabla v_t\rVert_{L^{\infty}(D)}\leq C(N,u_0)\delta^{-1}, 
\  \lVert\nabla v_t\rVert_{L^2(D)}\leq C(N,u_0)\delta^{-1/2},  \   \lVert\partial_t \nabla v_t\rVert\leq C(N,u_0)\delta^{-\frac{1}{2}}.
\end{align}
We choose $\delta$ such that \begin{align}\label{hp delta SG}
    \operatorname{lim}_{\eps\rightarrow 0}\delta=0,\quad  \operatorname{lim}_{\eps\rightarrow 0}\frac{\eps}{\delta}=0.
\end{align}
Now we are ready to show the convergence of $v^{\eps}$ to $v^E$.
\begin{lemma}\label{convergence v^eps v^E SG} 
    $v^{\eps}\rightarrow v^{E}$ in $C([0,T];H)$ in probability.
\end{lemma}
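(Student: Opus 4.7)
The strategy mirrors that of \autoref{convergence v^eps v^E}: apply It\^o's formula \eqref{ITO SG 1} to equation \eqref{auxiliary eps second grade} to obtain the identity for $\lVert v^\eps_t\rVert_V^2$, combine it with the deterministic analogue $\lVert v^E_t\rVert^2 = \lVert u_0\rVert^2 - 2\int_0^t b(v^E_s + F_s, F_s, v^E_s)\,ds$, and expand $\lVert v^\eps_t - v^E_t\rVert^2 = \lVert v^\eps_t\rVert^2 + \lVert v^E_t\rVert^2 - 2\langle v^\eps_t, v^E_t\rangle$, using $\lVert v^\eps_t\rVert^2 \leq \lVert v^\eps_t\rVert_V^2$ to bound the first summand by the quantity that It\^o's formula delivers. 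The mixed scalar product is rewritten by applying It\^o's formula to $t\mapsto \langle v^\eps_t, v^E_t - v_t\rangle$, where $v$ is the boundary layer corrector satisfying \eqref{property bl corrector 2}; this is legitimate because $v^E - v \in V$. The contributions coming from $v$ itself are absorbed via $\lVert v\rVert\leq C\delta^{1/2}$, $\lVert\partial_t v\rVert\leq C\delta^{1/2}$, together with \eqref{energy estimate second grade 1 }.

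The nonlinear terms that remain can be regrouped exactly as in the cancellations \eqref{step 4 main proof}--\eqref{step 10 main proof}, after replacing $b$ by the duality pairing with $\hat B$ and invoking \eqref{equvalence hatB and b}, \eqref{antisimmetry hatB}. Everything depending on $z^\eps$ and $F$ factors through quantities of the form $b(\cdot, F - z^\eps, \cdot)$ or $b(F - z^\eps, \cdot, \cdot)$, which vanish in probability thanks to \autoref{convergence forcing second grade} and the a priori estimates \eqref{energy estimate second grade 1 }, \eqref{uniform energy a.s. zeps}, the latter embedding $z^\eps$ into $L^\infty_t W^{1,\infty}$ uniformly in $\eps$ since $\gamma\geq 2$. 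The Stokes-like term $\nu\int_0^t\langle\nabla v^\eps, \nabla(v^E - v)\rangle\,ds$ splits into an interior contribution of order $\nu$ and a boundary-layer one controlled by Cauchy--Schwarz and $\lVert\nabla v\rVert\leq C\delta^{-1/2}$; together with $\nu\int_0^T\lVert\nabla v^\eps\rVert^2\,ds \leq C$ from \eqref{energy estimate second grade 1 }, both vanish because $\nu = O(\eps)$ and \eqref{hp delta SG} holds.

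The main obstacle is the genuinely nonlinear ``Kato-type'' term
\begin{align*}
J^\eps_t := \int_0^t \langle \operatorname{curl}(u^\eps_s - \eps\Delta u^\eps_s)\times v_s, u^\eps_s\rangle\,ds,
\end{align*}
which, in stark contrast to \autoref{convergence v^eps v^E}, must be shown to vanish in probability \emph{without} any Kato hypothesis. Via \eqref{equvalence hatB and b} the plan is to split
\begin{align*}
J^\eps_t = \int_0^t b(u^\eps_s, u^\eps_s - \eps\Delta u^\eps_s, v_s)\,ds - \int_0^t b(v_s, u^\eps_s - \eps\Delta u^\eps_s, u^\eps_s)\,ds
\end{align*}
and to integrate by parts in the $\eps\Delta u^\eps$ pieces, redistributing the unwanted derivatives onto $v$. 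The resulting expressions are controlled by combining the a.s. bound $\lVert u^\eps\rVert_V \leq C$ from \eqref{energy estimate second grade 1 }, the higher-order estimate \eqref{energy estimate second grade 2}, the boundary layer norms \eqref{property bl corrector 2} (in particular $\lVert\nabla v\rVert_{L^\infty}\leq C\delta^{-1}$), and the scaling $\nu = O(\eps)$ of \autoref{Second Grade Fluids assumption}. The crucial point is that the choice \eqref{hp delta SG} of $\delta$ is tailored so that every such contribution tends to zero as $\eps\to 0$; this is the core of the argument, and parallels the deterministic analysis of \cite{lopes2015approximation} and its stochastic adaptation in \cite[Section 6]{luongo2022inviscid}.

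Once these estimates are in place, a Gr\"onwall argument in the spirit of \eqref{step 15 main proof} yields
\begin{align*}
\sup_{t\in[0,T]}\lVert v^\eps_t - v^E_t\rVert^2 \rightarrow 0 \quad\text{in probability as } \eps\to 0,
\end{align*}
the remaining stochastic integral $\sqrt{\eps}\int_0^t\langle v^\eps_s, dW^\eps_s\rangle$ being handled by Burkholder--Davis--Gundy and \eqref{energy estimate second grade 1 }, and the initial-data discrepancy by $u_0^\eps\to u_0$ in $\mathcal{E}_0^{SG}$, which controls all $V$-norm errors at time zero.
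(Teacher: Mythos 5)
Your overall architecture (It\^o formula for the $V$-energy, boundary layer corrector, Gr\"onwall) is the right one, but the treatment of your self-identified ``main obstacle'' $J^\eps_t=\int_0^t\langle \operatorname{curl}(u^\eps_s-\eps\Delta u^\eps_s)\times v_s,u^\eps_s\rangle\,ds$ does not work, and this is where the proof lives or dies. By \eqref{equvalence hatB and b} the order-one (in $\eps$) part of the integrand is $b(v,u^\eps,u^\eps)-b(u^\eps,u^\eps,v)=b(u^\eps,v,u^\eps)$: the first piece vanishes since $\div v=0$, and the second, after integration by parts, is exactly the classical Kato term. Your plan only addresses the $\eps\Delta u^\eps$ pieces; the Kato term carries no factor of $\eps$ and cannot be made small with the available estimates. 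Indeed the standard bound gives $|b(u^\eps,v,u^\eps)|\lesssim \delta\lVert\nabla u^\eps\rVert^2_{L^2(\Gamma_\delta)}$, and for the second-grade system the only control is $\eps\sup_t\lVert\nabla u^\eps_t\rVert^2\le C$ (from \eqref{energy estimate second grade 1 }, using $\nu=O(\eps)$), so the bound is of order $\delta/\eps$, which \emph{diverges} under \eqref{hp delta SG} (where necessarily $\eps/\delta\to0$, i.e.\ $\delta\gg\eps$); trying instead $\lVert\nabla v\rVert_{L^\infty}\le C\delta^{-1}$ together with $\lVert u^\eps\rVert^2_{L^2(\Gamma_\delta)}\lesssim\delta\lVert u^\eps\rVert_{L^\infty}^2\lesssim\delta\eps^{-1}$ fares no better. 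Since \autoref{main thm LDP 2GF} assumes no Kato hypothesis, a decomposition that produces $b(u^\eps,v,u^\eps)$ is a dead end.

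The paper's proof is organized precisely to avoid generating this term: the corrector $v$ is \emph{never} inserted into the order-one convective terms. All $O(1)$ trilinear forms are tested against $w^\eps=v^\eps-v^E$ itself, which yields the harmless $-b(w^\eps,v^E,w^\eps)\le\lVert\nabla v^E\rVert_{L^\infty}\lVert w^\eps\rVert^2$ absorbed by Gr\"onwall; the corrector appears only against terms carrying explicit $\eps$ or $\nu$ prefactors (the elastic terms $\eps\langle\cdot,dAv^\eps\rangle$, the viscous term, and the $\eps b(\cdot,\Delta u^\eps,\cdot)$ pieces of $I_6$), where the weighted bounds \eqref{energy estimate second grade 1 }--\eqref{energy estimate second grade 2} and \eqref{hp delta SG} suffice. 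A secondary point you gloss over: the terms $\eps\langle v_t,\Delta v^\eps_t\rangle$ and $\nu\langle v_t,Av^\eps_t\rangle$ cannot be integrated by parts (the corrector does not vanish on $\partial D$) and are not controlled by \eqref{energy estimate second grade 1 } alone; they require the interpolation inequality \eqref{interpolation estimate} together with the $\eps^3$-weighted $H^3$ bound \eqref{energy estimate second grade 2}, producing the contributions $\eps^3\delta\sup_t\lVert v^\eps_t\rVert^2_{H^3}$ that drive the choice of $\delta$.
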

\begin{proof}
    Let $w^{\eps}=v^{\eps}-v^E$.  Arguing as in \cite[Theorem 9]{luongo2022inviscid} one can show that the following relations hold true:
    \begin{align}\label{main proof second grade 1}
    d\lVert w^{\eps}\rVert^2&=\eps Tr(Q)dt+\eps^3 Tr(A^2(I-\eps A)^{-2}Q)dt+\eps^2Tr(A(I-\eps A)^{-1}Q)dt \notag\\ & +2\nu \langle w^{\eps}, Av^{\eps}\rangle dt+2\sqrt{\eps}\langle w^{\eps}, dW^{\eps}_t\rangle+b(v^E+F,v^E+F, w^{\eps})dt\notag\\ & 
    -2b(v^{\eps}+z^{\eps}, (I-\eps \Delta)(v^{\eps}+z^{\eps}),w^{\eps})dt-2b(w^{\eps}, (I-\eps \Delta)(v^{\eps}+z^{\eps}),v^{\eps}+z^{\eps})dt\notag\\ 
    &+2\eps \langle w^{\eps}, d Av^{\eps}\rangle 
    \end{align}
where \begin{align*}
    \langle w^{\eps},d Av^{\eps}\rangle & =-\frac{d\lVert (-A)^{1/2}v^{\eps}\rVert^2}{2}+\frac{d\langle \langle (-A)^{1/2}v^{\eps}, (-A)^{1/2}v^{\eps}\rangle\rangle}{2}\\ &+d\langle (-A)^{1/2}(v^{E}-v),(-A)^{1/2}v^{\eps}\rangle-\langle\partial_t (-A)^{1/2}(v^E-v), (-A)^{1/2}v^{\eps}  \rangle\\ & -d\langle v,A v^{\eps}\rangle+\langle\partial_t v, A v^{\eps}  \rangle.
\end{align*}
First we observe that \begin{align}\label{main proof second grade 2}
    \eps Tr(Q)+\eps^3 Tr(A^2(I-\eps A)^{-2}Q)+\eps^2Tr(A(I-\eps A)^{-1}Q)=o(1).
\end{align}
Secondly, we can rewrite the trilinear forms as
\begin{align}\label{main proof second grade 3}
    &b(v^E+F,v^E+F, w^{\eps})
    -b(v^{\eps}+z^{\eps}, v^{\eps}+z^{\eps},w^{\eps})\notag +\eps b(v^{\eps}+z^{\eps},\Delta(v^{\eps}+z^{\eps}),w^{\eps})\notag\\ & -\eps b(w^{\eps}, \Delta(v^{\eps}+z^{\eps}),v^{\eps}+z^{\eps}) \pm b(v^{\eps},v^{E},w^{\eps})=\notag\\ & b(F,v^E+F, w^{\eps})+b(v^E, F, w^{\eps})-b(z^{\eps},v^{\eps}+z^{\eps},w^{\eps})-b(v^{\eps},z^{\eps},w^{\eps})-b(w^{\eps},v^{E}, w^{\eps})\notag \\ & +\eps b(u^{\eps},\Delta u^{\eps},w^{\eps}) -\eps b(w^{\eps}, \Delta u^{\eps},u^{\eps}).
\end{align}
Inegrating in time between $0$ and $t$ equation \eqref{main proof second grade 1} and exploiting \eqref{main proof second grade 2}, \eqref{main proof second grade 3}, 
 we get
\begin{align}\label{main proof second grade 4}
    \lVert w^{\eps}_t\rVert^2+\eps \lVert \nabla v^{\eps}_t\rVert^2 &=o(1)+\lVert u_0^{\eps}-u_0\rVert^2+\eps \lVert \nabla u_0^{\eps}\rVert^2+2\eps \langle \nabla (v^{E}_t-v_t),\nabla v^{\eps}_t\rangle  -2\eps \langle\nabla (u_0-v_0),\nabla u_0^{\eps}\rangle\notag\\ &-2\eps \int_0^t \langle \partial_s \nabla (v^E_s-v_s),\nabla v^{\eps}_s\rangle ds-2\eps \langle v_t,\Delta v^{\eps}_t\rangle +2\eps\langle v_0,\Delta u_0^{\eps}\rangle+2\eps \int_0^t \langle\partial_s v_s, \Delta v^{\eps}_s\rangle ds\notag\\ & +2\nu \int_0^t \langle w^{\eps}_s, Av^{\eps}_s\rangle ds +2\sqrt{\eps}\int_0^t \langle w^{\eps}_s, dW^{\eps}_s\rangle+2 \int_0^t b(F_s,u^E_s, w^{\eps}_s)-b(z^{\eps}_s,u^{\eps}_s,w^{\eps}_s)ds \notag \\ & +2\int_0^t b(v^E_s, F_s, w^{\eps}_s)-b(v^{\eps}_s,z^{\eps}_s,w^{\eps}_s)-b(w^{\eps}_s,v^E_s, w^{\eps}_s)ds\notag \\ & 
    +2\eps\int_0^t  b(u^{\eps}_s,\Delta u^{\eps}_s,w^{\eps}_s) - b(w^{\eps}_s, \Delta u^{\eps}_s,u^{\eps}_s) ds.
\end{align}
In order to reach our final expression for the evolution of $\lVert w^{\eps}_t\rVert^2$
we rewrite better the terms related to the forcing terms $f^{\eps},\ f$ in equation \eqref{main proof second grade 3}
\begin{align}\label{main proof second grade 5}
&b(F, u^{E},w^{\eps})+b(v^E,F,w^{\eps})-b(v^{\eps},z^{\eps},w^{\eps})-b(z^{\eps},u^{\eps},w^{\eps})\notag  \\ &= b(F,v^E,v^{\eps})+b(F,F,w^{\eps})+b(v^E,F,w^{\eps})-b(z^{\eps},z^{\eps},w^{\eps})\notag\\ & +b(z^{\eps},v^{\eps},v^E)-b(v^{\eps},z^{\eps},w^{\eps})\pm b(F,z^{\eps}, w^{\eps})\pm  b(v^E, z^{\eps},w^{\eps})\notag \\ & =b(F-z^{\eps},v^E,v^{\eps})+b(F,F-z^{\eps},w^{\eps})\notag\\ &+b(v^E, F-z^{\eps},w^{\eps})+b(F-z^{\eps},z^{\eps},w^{\eps})+b(w^{\eps},z^{\eps},w^{\eps}).
\end{align}
Since $u_0^\eps\rightarrow u_0$ in $\mathcal{E}_0^{SG}$, our choice of $\delta$, see \eqref{hp delta SG}, and the properties of the boundary layer corrector \eqref{property bl corrector 2} we have easily, see \cite[Equation (99)]{luongo2022inviscid} for details, \begin{align*}
    \eps \lVert \nabla u_0^{\eps}\rVert^2-2\eps \langle \nabla(u_0-v_0),\nabla u_0^{\eps}\rangle+\eps \langle v_0,\Delta u_0^{\eps}\rangle=o(1).
\end{align*}
Inserting \eqref{main proof second grade 5} in \eqref{main proof second grade 4} we get 
\begin{align}\label{main proof second grade 6}
    \lVert w^{\eps}_t\rVert^2+\eps \lVert \nabla v^{\eps}_t\rVert^2 &=o(1)+2\eps \langle \nabla (v^{E}_t-v_t),\nabla v^{\eps}_t\rangle  -2\eps \int_0^t \langle \partial_s \nabla (v^E_s-v_s),\nabla v^{\eps}_s\rangle ds\notag \\ &-2\eps \langle v_t,\Delta v^{\eps}_t\rangle +2\eps \int_0^t \langle\partial_s v_s, \Delta v^{\eps}_s\rangle ds +2\nu \int_0^t \langle w^{\eps}_s, Av^{\eps}_s\rangle ds +2\sqrt{\eps}\int_0^t \langle w^{\eps}_s, dW^{\eps}_s\rangle\notag\\ & +2 \int_0^t b(F_s+v^E_s,F_s-z^{\eps}_s, w^{\eps}_s)-b(F_s-z^{\eps}_s,v^E_s,v^{\eps}_s)ds+ b(F_s-z^{\eps}_s, z^{\eps}_s, w^{\eps}_s) ds\notag\\ & -2\int_0^t b(w^{\eps}_s,z^{\eps}_s,w^{\eps}_s)-b(w^{\eps}_s,v^E_s, w^{\eps}_s)ds\notag \\ & 
    +2\eps\int_0^t  b(u^{\eps}_s,\Delta u^{\eps}_s,w^{\eps}_s) - b(w^{\eps}_s, \Delta u^{\eps}_s,u^{\eps}) ds\notag \\ & =I_1(t)+I_2(t)+I_3(t)+I_4(t)+I_5(t)+I_6(t)+M(t),
\end{align}
where
\begin{align*}
    I_1(t)&=2\eps \langle \nabla (v^{E}_t-v_t),\nabla v^{\eps}_t\rangle-2\eps \langle v_t,\Delta v^{\eps}_t\rangle,\\
    I_2(t)&=-2\eps \int_0^t \langle \partial_s \nabla (v^E_s-v_s),\nabla v^{\eps}_s\rangle ds+2\eps \int_0^t \langle\partial_s v_s, \Delta v^{\eps}_s\rangle ds,\\
    I_3(t)&=2\nu \int_0^t \langle w^{\eps}_s, Av^{\eps}_s\rangle ds,\\ 
    I_4(t)&=-2\int_0^t b(w^{\eps}_s,z^{\eps}_s,w^{\eps}_s)-b(w^{\eps}_s,v^E_s, w^{\eps}_s)ds,\\ I_5(t)&=2 \int_0^t b(F_s+v^E_s,F_s-z^{\eps}_s, w^{\eps}_s)-b(F_s-z^{\eps}_s,v^E_s,v^{\eps}_s)ds+ b(F_s-z^{\eps}_s, z^{\eps}_s, w^{\eps}_s) ds,\\ I_6(t)&=2\eps\int_0^t  b(u^{\eps}_s,\Delta u^{\eps}_s,w^{\eps}_s) - b(w^{\eps}_s, \Delta u^{\eps}_s,u^{\eps}) ds,\\ M(t)&=2\sqrt{\eps}\int_0^t \langle w^{\eps}_s, dW^{\eps}_s\rangle.
\end{align*}
Equation \eqref{main proof second grade 6} is the final expression that we will use in order to estimate the various terms and apply Gr\"onwall's lemma. The analysis of $I_1(t)$ follows by the properties of the boundary layer corrector \eqref{property bl corrector 2}, our choice of $\delta$ \eqref{hp delta SG} and the interpolation inequality \eqref{interpolation estimate}. Therefore it holds:
\begin{align}\label{main proof second grade 7}
 I_1(t) & \leq \eps C(N,u_0)(1+\delta^{-1/2})\lVert \nabla v^{\eps}_t\rVert+\eps\delta^{1/2}C(N,u_0)\lVert v^{\eps}_t\rVert_{H^2}\notag \\ & \leq   \frac{\eps}{10}\lVert \nabla v^{\eps}_t\rVert^2+\eps C(N,u_0)(1+\delta^{-1})+\eps\delta^{1/2}C(N,u_0)\lVert \nabla v^{\eps}\rVert^{1/2}\lVert v^{\eps}\rVert_{H^3}^{1/2}\notag \\ & \leq o(1)+\frac{\eps}{5}\lVert \nabla v^{\eps}_t\rVert^2+\eps^3\delta \lVert v^{\eps}_t\rVert_{H^3}^2+\delta^{1/2}C(N,u_0)\notag\\ & =o(1)+\frac{\eps}{5}\lVert \nabla v^{\eps}_t\rVert^2+\eps^3\delta \lVert v^{\eps}_t\rVert_{H^3}^2.
\end{align}

The analysis of $I_2(t)$ is analogous to the \eqref{main proof second grade 7} and leads us to
\begin{align}\label{main proof second grade 8}
     I_2(t)  &\leq o(1)+\eps \int_0^t \lVert \nabla v^{\eps}_s\rVert^2 ds+\eps^3\delta \int_0^T \lVert v^{\eps}_s\rVert_{H^3}^2 ds.
\end{align}
In order to treat $I_3(t)$, we split $w^{\eps}$ in $v^{\eps},\ v^E-v$ and $v$. Then the first two terms are integrated by parts. Exploiting the properties of the boundary layer corrector \eqref{property bl corrector 2}, our choice of $\delta$ \eqref{hp delta SG}, $\nu$ \eqref{Second Grade Fluids assumption} and the interpolation inequality \eqref{interpolation estimate} it holds

\begin{align}\label{main proof second grade 9}
     I_3(t)&=-\nu \int_0^t \lVert \nabla v^{\eps}_s\rVert^2 ds-\nu \int_0^t \langle v^E_s-v_s,Av^{\eps}_s\rangle ds -\nu \int_0^t \langle v_s,Av_s^{\eps}\rangle ds   \notag \\ &
    =-\nu \int_0^t \lVert \nabla v^{\eps}_s\rVert^2 ds+\nu \int_0^t \lVert \nabla(v^E_s-v_s)\rVert \lVert\nabla v^{\eps}_s\rVert ds +\nu \int_0^t \lVert v_s\rVert \lVert v_s^{\eps}\rVert_{H^2} ds \notag\\ & \leq 
    -\nu \int_0^t \lVert \nabla v_s^{\eps}\rVert^2 ds+\nu(1+\delta^{-1/2})C(N,u_0) \int_0^t  \lVert\nabla v_s^{\eps}\rVert ds +\nu\delta^{1/2}C(N,u_0) \int_0^t \lVert \nabla v_s^{\eps}\rVert^{1/2} \lVert v_s^{\eps}\rVert_{H^3}^{1/2} ds\notag\\ & \leq \eps \int_0^t \lVert \nabla v_s^{\eps}\rVert^2 ds +\eps^3\delta\int_0^T \lVert v_s^{\eps}\rVert_{H^3}^2 ds +\eps(1+\delta^{-1})C(N,u_0)+\delta^{1/2}C(N,u_0)\notag \\ & = o(1)+\eps \int_0^t \lVert \nabla v_s^{\eps}\rVert^2 ds +\eps^3\delta\int_0^T \lVert v_s^{\eps}\rVert_{H^3}^2 ds.
\end{align}

$I_4(t)$ can be bounded easily by H\"older's inequality, obtaining 
\begin{align}\label{main proof second grade 10}
     I_4(t) & \leq \left(\lVert z^{\eps}\rVert_{L^{\infty}(0,T;W^{1,\infty})}+\lVert v^E\rVert_{L^{\infty}(0,T;W^{1,\infty})}\right)\int_0^t \lVert w^{\eps}_s\rVert^2 ds.
\end{align}
$I_5(t)$ can be handle via H\"older's inequality, exploiting the bounds available on $F$ and $v^E$, see \eqref{uniform bound integral forcing NS} and \eqref{Energy Euler }:
\begin{align}\label{main proof second grade 11}
    I_5(t)&\leq \int_0^t \lvert b(F_s+v^E_s,F_s-z^{\eps}_s, w^{\eps}_s)\rvert+\lvert b(F_s-z^{\eps}_s,v^E_s,v^{\eps}_s)\rvert + \lvert b(F_s-z_s^{\eps}, z^{\eps}_s, w^{\eps}_s)\rvert ds\notag\\ & \leq \int_0^t \lVert w^{\eps}_s\rVert \lVert F_s-z_s^{\eps}\rVert_{H^1}\lVert F_s+v_s^E\rVert_{L^{\infty}}ds +\int_0^t \lVert v_s^{\eps}\rVert \lVert F_s-z_s^{\eps}\rVert\lVert v_s^E\rVert_{W^{1,\infty}}ds + \int_0^t \lVert w_s^{\eps}\rVert \lVert F_s-z_s^{\eps}\rVert\lVert z_s^{\eps}\rVert_{W^{1,\infty}}ds\notag\\ & \leq C(N,u_0)\lVert F-z^{\eps}\rVert_{C([0,T];H^1)}\left(1+\lVert F-z^{\eps}\rVert_{C([0,T];H^1)}\right)+\int_0^t \lVert w_s^{\eps}\rVert^2 ds +\int_0^t \lVert w_s^{\eps}\rVert \lVert F_s-z_s^{\eps}\rVert\lVert z_s^{\eps}\rVert_{W^{1,\infty}}ds.
\end{align}
Now we can move to $I_6(t)$ which is the most difficult term. Preliminarily we observe that 
\begin{align*}
 \eps b(u^{\eps},\Delta u^{\eps},w^{\eps})-\eps b(w^{\eps},\Delta u^{\eps},u^{\eps})&=\eps b(v^{\eps},\Delta u^{\eps},v^{\eps})+{\eps} b(z^{\eps},\Delta u^{\eps},w^{\eps})\\ & - \eps b(v^{\eps},\Delta u^{\eps}, v^E) -{\eps} b(v^{\eps}, \Delta u^{\eps}, v^{\eps})\\ &-{\eps} b(w^{\eps},\Delta u^{\eps},{z}^{\eps})+\eps b(v^E,\Delta u^{\eps},v^{\eps} )\\ & ={\eps} b(z^{\eps},\Delta u^{\eps},w^{\eps}) - \eps b(v^{\eps},\Delta u^{\eps}, v^E)\\ & -{\eps} b(w^{\eps},\Delta u^{\eps},{z}^{\eps})+\eps b(v^E,\Delta u^{\eps},v^{\eps} ).
\end{align*}
We start considering 
 $-\eps\int_0^t b(v^{\eps}_s,\Delta u^{\eps}_s,v^E_s) ds+\eps \int_0^t b(v^E_s,\Delta u^{\eps}_s, v^{\eps}_s) ds . $
It can treated similarly to \cite[Equations (4.18)-(4.19)]{lopes2015convergence}. $-\eps\int_0^t b(v^{\eps}_s,\Delta u^{\eps}_s,v^E_s) ds$ can be integrated by parts, then it holds:

\begin{align}\label{main proof second grade 12}
    -\eps b(v^{\eps},\Delta u^{\eps},v^E) & =\eps  \int_D  v_i^{\eps}\partial_i v_j^E \partial_{kk} u^{\eps}_j \ dx \notag\\ & = -\eps \int_D \partial_k v_i^{\eps} \partial_i v^E_j \partial_k (v^{\eps}_j+z^{\eps}_j) dx-\eps \int_D  v_i^{\eps}\partial_{i,k} v^E_j \partial_k  u^{\eps}_j dx  \notag\\ & \leq o(1)+2\eps \lVert v^E\rVert_{W^{2,4}}\lVert \nabla v^{\eps}\rVert^2.
\end{align}
In the last step we use the fact that $\eps \lVert v^E\rVert_{W^{2,4}}\lVert \nabla z^{\eps}\rVert^2=o(1)\quad\mathbb{P}-a.s.$ by \autoref{convergence forcing second grade}.
For what concerns $\eps \int_0^t b(v^E_s,\Delta u^{\eps}_s, v^{\eps}_s) ds$, we split it in three terms:
\begin{align}\label{main proof second grade 13}
    \eps b(v^E,\Delta u^{\eps},v^{\eps})&= -\eps \int_D v^E\cdot \nabla v^\eps \Delta u^{\eps}dx  \notag \\ & =-\eps \int_D (v^E-v)\cdot \nabla v^\eps \Delta v^{\eps}dx-\eps \int_D v\cdot \nabla v^\eps \Delta v^{\eps}dx-\eps \int_D v^E\cdot \nabla v^\eps \Delta z^{\eps}dx \notag\\ & =J_1+J_2+J_3.
\end{align}
$J_3$ is the easiest term and can be bounded by the right hand side of \eqref{main proof second grade 12} arguing as above.
Since  $v^E-v|_{\partial D},\ v^{\eps}|_{\partial D}=0$, we can integrate by part $J_1$ repeatedly, obtaining via H\"older's inequality the following estimate:
\begin{align}\label{main proof second grade 14}
    -\eps \int_D (v^E-v)\cdot \nabla v^\eps \Delta v^{\eps}dx &=\eps \int_D \partial_k (v_i^E-v_i)\partial_i v_j^{\eps}\partial_k v^{\eps}_j dx +\frac{\eps}{2}\int_D (v_i^E-v_i)\partial_i\lvert \partial_k v^{\eps}_j\rvert^2 dx\notag\\ & = \eps \int_D \partial_k v_i^E\partial_i v_j^{\eps}\partial_k v^{\eps}_j dx-\eps \int_D \partial_k v_i\partial_i v_j^{\eps}\partial_k v^{\eps}_j dx \notag\\ & \leq \eps \lVert v^E\rVert_{W^{2,4}}\lVert \nabla v^{\eps}\rVert^2-\eps \int_D \partial_k v_i\partial_i v_j^{\eps}\partial_k v^{\eps}_j dx\notag\\ &=\eps \lVert v^E\rVert_{W^{2,4}}\lVert \nabla v^{\eps}\rVert^2+\eps \int_D \partial_k v_i v_j^{\eps}\partial_{i,k} v^{\eps}_j dx\notag\\ & =
    \eps \lVert v^E\rVert_{W^{2,4}}\lVert \nabla v^{\eps}\rVert^2-\frac{\eps}{2} \int_D  v_i \partial_i\lvert\partial_k v_j^{\eps}\rvert^2 dx-\eps \int_D  v_i v_j^{\eps}\partial_{i,k,k} v^{\eps}_j dx \notag \\ &  = \eps \lVert v^E\rVert_{W^{2,4}}\lVert \nabla v^{\eps}\rVert^2+\eps \int_D v_i \partial_i v^{\eps}_j \partial_{k,k} v^{\eps}_j dx \notag\\ &  =\eps \lVert v^E\rVert_{W^{2,4}}\lVert \nabla v^{\eps}\rVert^2-J_2. 
\end{align}
Combining \eqref{main proof second grade 12}, \eqref{main proof second grade 13} ,\eqref{main proof second grade 14} we get 
\begin{align}\label{main proof second grade 15}
    -\eps\int_0^t b(v_s^{\eps},\Delta u_s^{\eps},v^E_s) ds+\eps \int_0^t b(v^E_s,\Delta u^{\eps}_s, v^{\eps}_s) ds & \leq o(1)+ \eps C (N,u_0)\lVert v^{E}\rVert_{L^{\infty}(0,T;W^{2,4})}\int_0^t \lVert \nabla v^{\eps}_s\rVert^2 ds.
\end{align}
We left to estimate ${\eps} \int_0^t  b(z^{\eps}_s,\Delta u^{\eps}_s,w^{\eps}_s)ds  -{\eps} \int_0^t b(w^{\eps}_s,\Delta u^{\eps}_s,{z}^{\eps}_s) ds.$
We start considering ${\eps} \int_0^t  b(z^{\eps}_s,\Delta u^{\eps}_s,w^{\eps}_s)ds$ integrating by parts repeatedly since $z^{\eps}|_{\partial D}=0$, we obtain by H\"older's inequality and the $\mathbb{P}-a.s.$ estimates on $z^{\eps}$ and $v^E$ guaranteed by \autoref{convergence integral forcing SG} and \eqref{Energy Euler } \begin{align}\label{main proof second grade 16}
   \eps b(z^{\eps},\Delta u^{\eps}, w^{\eps})& =-\eps \int_D  z^{\eps}_i \partial_i w^{\eps}_j \partial_{k,k} u^{\eps}_j dx \notag \\ & =\eps \int_D  \partial_k z^{\eps}_i \partial_i w^{\eps}_j \partial_{k} u^{\eps}_j dx+\eps \int_D  z^{\eps}_i \partial_{i,k} w^{\eps}_j \partial_{k} u^{\eps}_j dx\notag \\ &\leq \eps \lVert z^{\eps}\rVert_{W^{2,4}}(\lVert \nabla v^{\eps}\rVert+\lVert \nabla z^{\eps}\rVert)(\lVert \nabla v^{\eps}\rVert+\lVert \nabla v^E\rVert) +\eps \int_D z_i^{\eps}\partial_{i,k}(u_j^{\eps}-z_j^{\eps}-v_j^E) \partial_k u^{\eps}_j dx \notag\\ & \leq o(1)+ \eps \lVert z^{\eps}\rVert_{W^{2,4}}\lVert \nabla v^{\eps}\rVert^2+\frac{\eps}{2}\int_D z_i^{\eps}\partial_i \lvert \partial_k u^{\eps}_k\rvert^2 dx +\eps \lVert z^{\eps}\rVert_{L^4}(\lVert z^{\eps}\rVert_{W^{2,4}}+\lVert v^E\rVert_{W^{2,4}})(\lVert \nabla v^{\eps}\rVert+\lVert \nabla z^{\eps}\rVert) \notag \\ & \leq o(1)+\eps\left( \lVert z^{\eps}\rVert_{W^{2,4}}+\lVert v^{E}\rVert_{W^{2,4}}\right)\lVert \nabla v^{\eps}\rVert^2.
\end{align}
Lastly we consider $-{\eps} \int_0^t b(w^{\eps}_s,\Delta u^{\eps}_s,{z}^{\eps}_s) ds$. Here we want again integrate by parts repeatly, for this reason we add and subtract ${\eps} \int_0^t b(v_s,\Delta u^{\eps}_s,{z}^{\eps}_s) ds$ exploiting the fact that $w^{\eps}+v|_{\partial D}=0$. Therefore, thanks to the properties of the boundary layer corrector \eqref{property bl corrector 2} and computations already performed we obtain: \begin{align}\label{main proof second grade 17} 
    -\eps b(w^{\eps},\Delta u^{\eps},{z}^{\eps}) & =\eps \int_D (w_i^{\eps}+v_i)\partial_i z^{\eps}_k \partial_{j,j}u^{\eps}_k dx -\eps \int_D  v_i\partial_i z^{\eps}_k \partial_{j,j}u^{\eps}_k dx\notag \\ &= -\eps \int_D \partial_j(w_i^{\eps}+v_i)\partial_i z^{\eps}_k \partial_{j}u^{\eps}_k dx -\eps\int_D (w_i^{\eps}+v_i)\partial_{i,j} z^{\eps}_k \partial_{j}u^{\eps}_j dx + \eps \lVert v\rVert \lVert z^{\eps}\rVert_{W^{2,4}}(\lVert  v^{\eps}\rVert_{H^2}+\lVert  z^{\eps}\rVert_{H^2} ) \notag \\ & \leq o(1) +\eps C(N,u_0)\delta^{1/2}\lVert z^{\eps}\rVert_{W^{2,4}}\lVert \nabla v^{\eps}\rVert^{1/2}\lVert  v^{\eps}\rVert_{H^3}^{1/2}\notag \\ &+2\eps \lVert  z^{\eps}\rVert_{W^{2,4}} (\lVert \nabla v^{\eps}\rVert +\lVert \nabla v^{E}\rVert+\lVert \nabla v\rVert  )(\lVert \nabla v^{\eps}\rVert+\lVert \nabla z^{\eps}\rVert)\notag \\ & \leq o(1)+\eps C\lVert z^{\eps}\rVert_{W^{2,4}}\lVert \nabla v^{\eps}\rVert^2 +\eps^3 \delta\lVert v^{\eps}\rVert_{H^3}^2+\delta^{1/2}C(N,u_0)\lVert z^{\eps}\rVert_{W^{2,4}}^{3/2}+C(N,u_0)\eps\delta^{-1}\lVert z^{\eps}\rVert_{W^{2,4}}^4.
\end{align}
In conclusion, combining \eqref{main proof second grade 15},\eqref{main proof second grade 16},\eqref{main proof second grade 17}  we obtain
\begin{align}\label{main proof second grade 18}
    I_6(t) &\leq o(1) + \eps C\left( \lVert z^{\eps}\rVert_{L^{\infty}(0,T;W^{2,4})}+\lVert v^{E}\rVert_{L^{\infty}(0,T;W^{2,4})}\right)\int_0^t \lVert \nabla v^{\eps}_s\rVert^2 ds\notag + \eps^3 \delta\operatorname{sup}_{t\in [0,T]}\lVert v^{\eps}_t\rVert_{H^3}^2\notag \\ & +\delta^{1/2}C(N,u_0)\lVert z^{\eps}\rVert_{L^{\infty}(0,T;W^{2,4})}^{3/2}+C(N,u_0)\eps\delta^{-1}\lVert z^{\eps}\rVert_{L^{\infty}(0,T;W^{2,4})}^4 .
\end{align}
Combining the various estimates on the $I_i(t),\ i\in \{1,\dots, 6\}$ we get 
\begin{align}\label{main proof second grade 19}
    \lVert w^{\eps}_t\rVert^2+\frac{4}{5}\eps \lVert \nabla v^{\eps}_t\rVert^2 &\leq o(1)+C\eps(1+\lVert v^E\rVert_{L^{\infty}(0,T;W^{2,4})}+\lVert z^{\eps}\rVert_{L^{\infty}(0,T;W^{2,4})}
    )\int_0^t \lVert \nabla v^{\eps}_s\rVert^2 ds \notag\\ & +C(T)\eps^3\delta \operatorname{sup}_{t\in [0,T]}\lVert v^{\eps}_t\rVert_{H^3}^2+C\sqrt{\eps}\operatorname{sup}_{t\in [0,T]}\left\lvert\int_0^t \langle w^{\eps}_s, dW_s\rangle\right\rvert\notag\\ &+ C(1+\lVert z^{\eps}\rVert_{L^{\infty}(0,T;W^{1,\infty})}+\lVert v^E\rVert_{L^{\infty}(0,T;W^{1,\infty})})\int_0^t \lVert w^{\eps}_s\rVert^2 ds \notag \\ & +C(N,u_0)\lVert F-z^{\eps}\rVert_{C([0,T];H^1)}\left(1+\lVert F-z^{\eps}\rVert_{C([0,T];H^1)}\right) +\int_0^t \lVert w^{\eps}_s\rVert \lVert F_s-z_s^{\eps}\rVert\lVert z^{\eps}_s\rVert_{W^{1,\infty}}ds.
\end{align}
Applying Gr\"onwall's Lemma to \eqref{main proof second grade 19} we obtain
\begin{align}\label{main proof second grade 22}
    \operatorname{sup}_{t\in [0,T]}\lVert w^{\eps}_t\rVert^2+ \eps\operatorname{sup}_{t\in [0,T]}\lVert \nabla v^{\eps}_t\rVert^2 & \leq e^{C(T)(1+\lVert v^E\rVert_{L^{\infty}(0,T;W^{2,4})}+\lVert z^{\eps}\rVert_{L^{\infty}(0,T;W^{2,4})}}\times\notag \\ & \bigg (o(1)+T\eps^3\delta \operatorname{sup}_{t\in [0,T]}\lVert v^{\eps}_t\rVert_{H^3}^2 \notag\\ &+C(N,u_0)\lVert F-z^{\eps}\rVert_{C([0,T];H^1)}\left(1+\lVert F-z^{\eps}\rVert_{C([0,T];H^1)}\right) \notag\\ & +\int_0^T \lVert w^{\eps}_s\rVert \lVert F_s-z_s^{\eps}\rVert\lVert z_s^{\eps}\rVert_{W^{1,\infty}}ds +C\sqrt{\eps}\operatorname{sup}_{t\in [0,T]}\left\lvert\int_0^t \langle w^{\eps}_s, dW_s\rangle\right\rvert \bigg)
\end{align}
Under our assumptions we have $e^{CT(1+\lVert v^E\rVert_{L^{\infty}(0,T;W^{2,4})}+\lVert z^{\eps}\rVert_{L^{\infty}(0,T;W^{2,4})})}\leq C(N,u_0)\quad \mathbb{P}-a.s.$, see \eqref{Energy Euler }, \eqref{uniform energy a.s. zeps}. This means that, in order to show that \autoref{convergence v^eps v^E SG}  holds, it is enough to prove that 
\begin{align*}
    &T\eps^3\delta \operatorname{sup}_{t\in [0,T]}\lVert v^{\eps}_t\rVert_{H^3}^2 +C(N,u_0)\lVert F-z^{\eps}\rVert_{C([0,T];H^1)}\left(1+\lVert F-z^{\eps}\rVert_{C([0,T];H^1)}\right) \notag\\ & +\int_0^T \lVert w^{\eps}_s\rVert \lVert F_s-z_s^{\eps}\rVert\lVert z_s^{\eps}\rVert_{W^{1,\infty}}ds +C\sqrt{\eps}\operatorname{sup}_{t\in [0,T]}\left\lvert\int_0^t \langle w^{\eps}_s, dW_s\rangle\right\rvert\rightarrow 0\quad \text{in Probability.}
\end{align*}
Thanks to \eqref{energy estimate second grade 2}, we have $T\eps^3\delta \operatorname{sup}_{t\in [0,T]}\lVert v^{\eps}_t\rVert_{H^3}^2\rightarrow 0$ in probability.
$C(N,u_0)\lVert F-z^{\eps}\rVert_{C_tH^1_x}^2 \rightarrow 0$ in probability by \autoref{convergence forcing second grade}. Lastly, by  \autoref{convergence forcing second grade} we have also
\begin{align*}
    &\mathbb{E}\left[\sqrt{\eps}\operatorname{sup}_{t\in [0,T]}\left\lvert\int_0^t \langle w^{\eps}_s, dW_s\rangle\right\rvert+\int_0^T \lVert w_s^{\eps}\rVert \lVert F_s-z_s^{\eps}\rVert\lVert z_s^{\eps}\rVert_{W^{1,\infty}}ds\right]\\ & \leq C\mathbb{E}\left[\int_0^T \lVert w^{\eps}_s\rVert^2\right]^{1/2}\left(\sqrt{\eps}+\mathbb{E}\left[\int_0^T \lVert F_s-z_s^{\eps}\rVert^2 ds\right]^{1/2}\right)\rightarrow 0.
\end{align*}
Now the proof is complete.
\end{proof}
Combining \autoref{convergence forcing second grade} and \autoref{convergence v^eps v^E SG}  the second condition in \autoref{conditions weak convergence approach} holds. Therefore we can apply \autoref{weak convergence approach abstract thm} and complete the proof of \autoref{main thm LDP 2GF}.
\section{Some Remarks on the Kato Condition}\label{sec:Remark Kato}
We end this work with a discussion on the Kato-type condition that we assumed in order to prove one of our main results, \autoref{main thm LDP NS}. Recall that the condition \autoref{strong kato hp} was the following
\begin{hypothesis}[Strong Kato Hypothesis]
 For each $N\in \mathbb{N}$, $u_0^{\eps},\ u_0\in \mathcal{E}_0^{NS}$ and $f^{\eps},\ f\in \mathcal{P}_2^N$ such that $u_0^{\eps}\rightarrow u_0$ in $\mathcal{E}_0^{NS}$ and $f^{\eps}\rightarrow_{\mathcal{L}} f$ in $S^N$, if $(\Omega,\mathcal{F},\mathcal{F}_t,\mathbb{P})$  is a filtered probability space where all $f^{\eps}$, $f$ are defined together and $f^{\eps}\rightarrow f \ \mathbb{P}-a.s.$ in $S^N$, then,  it exists $c>0$ such that for every $\delta>0$
\begin{align*}
    \mathbb{P}\left(\eps\int_0^T\left\lVert \nabla \mathcal{G}^{NS,\eps}\left(u_0^{\eps}, \sqrt{\eps}W_\cdot+\int_0^\cdot f^{\epsilon}_s ds \right) \right\rVert_{L^2(\Gamma_{c\eps})}^2 ds>\delta\right)\rightarrow 0.
\end{align*}    
\end{hypothesis}
Loosely speaking, this condition requires a control on the behaviour in the boundary layer of the solutions of the stochastic Navier-Stokes system with respect to all kind of forcings. This is much more than what we ask to ensure the validity of the inviscid limit, namely 
\begin{align}\label{weak kato cond}
    \EE\left(\eps\int_0^T\left\lVert \nabla \mathcal{G}^{NS,\eps}\left(u_0^{\eps}, \sqrt{\eps}W_\cdot \right) \right\rVert_{L^2(\Gamma_{c\eps})}^2 \right)\rightarrow 0,
\end{align}    
(see \cite{luongo2021inviscid}). In the following, we will call the property described by equation \eqref{weak kato cond} as weak Kato condition. One can ask if this condition alone is enough to ensure the validity of the LDP. Let us notice first that what we have proved is not only a large deviation result for the Navier-Stokes system with zero forcing, $u^\eps := \mathcal{G}^{NS,\eps}\left(u_0, \sqrt{\eps}W_\cdot \right)$, but actually we got, as a byproduct, a LDP result for solutions with any forcing in $L^2(0, T; H_0)$, as pointed out in \autoref{indifference to forcing}. Indeed if we want to include in the system a forcing $g_t\in L^2(0, T; H_0)$ we can just redefine the maps $\G^0_g:=\G^0(\cdot, \int_0^\cdot(\cdot + g_s)ds)$ and $\G^\eps_g:=\G^\eps(\cdot, \int_0^\cdot(\cdot + g_s)ds)$ and the result that we have proved immediately imply a LDP for the solution with forcing, under the same \autoref{strong kato hp}.
In this sense our condition is optimal for our setting: we ask controls for every forcing and we get a LDP for every forcing term. 

Therefore, one way of improving our result could be to prove that the strong Kato hypotesis can be deduced from the  weak Kato condition. In some sense this would requires to be able to pass information between systems with different forcings. We shall notice that the forcing that we are working with all live in the reproducing kernel of $W$, therefore we might switch from one system to another just by a Girsanov transformation; however this correction explodes exponentially fast in the limit $\eps \rightarrow 0$. A posteriori, if one is able to prove that the LDP holds for some forcing, one expects that the explosion of the Girsanov correction gets compensated by the exponential decay of the law of the solutions. A different approach would be to prove that one does not in fact need to ask the strong Kato condition in order to prove only a LDP for \autoref{NS introduction} (that is, only for the system with zero forcing). 
We formulate then the following:
\begin{conjecture}(LDP under Weak Kato Hypotesys)
The statement of \autoref{main thm LDP NS} holds true if we replace \autoref{strong kato hp} with the Kato Condition \autoref{weak kato cond}.     
\end{conjecture}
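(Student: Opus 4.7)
The plan is to adapt the proof of \autoref{main thm LDP NS} so that the strong Kato condition is invoked only for the zero-forcing system, where it reduces precisely to the weak Kato condition \eqref{weak kato cond}. The natural vehicle is Girsanov's theorem. Fix $N>0$ and $f^{\eps}\in\mathcal{P}_2^N$, and define the exponential martingale
$$D_{\eps} := \exp\!\left(-\eps^{-1/2}\int_0^T \langle f^{\eps}_s, dW_s\rangle_{H_0} - \tfrac{1}{2\eps}\int_0^T \|f^{\eps}_s\|_{H_0}^2\, ds\right),$$
and the tilted measure $\widetilde{\mathbb{P}}_{\eps} = D_{\eps}\cdot \mathbb{P}$. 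Under $\widetilde{\mathbb{P}}_{\eps}$, the shifted process $\widetilde W_t := W_t + \eps^{-1/2}\int_0^t f^{\eps}_s\,ds$ is a $\mathcal{Q}$-Wiener process, so the forced solution $u^{\eps} = \mathcal{G}^{NS,\eps}(u_0^{\eps}, \sqrt{\eps}W + \int_0^\cdot f^{\eps}_s ds) = \mathcal{G}^{NS,\eps}(u_0^{\eps}, \sqrt{\eps}\widetilde W)$ has, under $\widetilde{\mathbb{P}}_{\eps}$, the same law as the unforced solution $\widetilde u^{\eps} := \mathcal{G}^{NS,\eps}(u_0^{\eps}, \sqrt{\eps}W)$ under $\mathbb{P}$, to which \eqref{weak kato cond} applies.

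The next step is to inspect Condition 2 in \autoref{sec:Kato assumption} and observe that \autoref{strong kato hp} enters the argument only in making the term $\eps\int_0^T \|\nabla u^{\eps}_s\|^2_{L^2(\Gamma_{c\eps})}\,ds$ vanish in probability; all remaining estimates, including the boundary-layer corrector machinery and the energy identity, use only generic a priori bounds. Setting $A_{\eps}^{\delta} := \{\eps\int_0^T \|\nabla u^{\eps}_s\|^2_{L^2(\Gamma_{c\eps})}\,ds > \delta\}$ and applying H\"older's inequality with conjugate exponents $p,q>1$ yields
$$\mathbb{P}(A_{\eps}^{\delta}) = \widetilde{\mathbb{E}}_{\eps}[D_{\eps}^{-1}\mathbf{1}_{A_{\eps}^{\delta}}] \le \widetilde{\mathbb{E}}_{\eps}[D_{\eps}^{-p}]^{1/p}\,\widetilde{\mathbb{P}}_{\eps}(A_{\eps}^{\delta})^{1/q}.$$
A direct computation, rewriting $D_\eps^{-p}$ as a Girsanov density times an exponential of the quadratic variation, gives $\widetilde{\mathbb{E}}_{\eps}[D_{\eps}^{-p}] \le \exp\!\big(p(p-1)N/(2\eps)\big)$ since $\int_0^T \|f^{\eps}_s\|_{H_0}^2\,ds\le N$, while \eqref{weak kato cond} supplies only an unquantified $\widetilde{\mathbb{P}}_{\eps}(A_{\eps}^{\delta}) \to 0$, obtained via Markov.

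The main obstacle is precisely this imbalance between the exponential blow-up $e^{CN/\eps}$ of the Girsanov correction and the unquantified decay inherited from weak Kato. To close the estimate one would need to upgrade \eqref{weak kato cond} to an \emph{exponential concentration} bound $\widetilde{\mathbb{P}}_{\eps}(A_{\eps}^{\delta}) \le \exp(-c(\delta)/\eps)$ with $c(\delta)$ exceeding $p(p-1)N/(2q)$ for some admissible $p$, or equivalently to establish an exponential moment estimate of the form $\mathbb{E}\big[\exp\!\big(\lambda \eps\int_0^T \|\nabla \widetilde u^{\eps}_s\|^2_{L^2(\Gamma_{c\eps})}\,ds\big)\big] \le M$ for some $\lambda, M > 0$. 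Such a bound would require a genuine concentration-of-measure argument for the stochastic Navier-Stokes flow, for instance via Borell-type inequalities applied after linearizing around the Stokes semigroup in the boundary layer, together with a quantitative version of Kato's corrector estimate. The Navier-Stokes nonlinearity obstructs any direct Gaussian concentration technique, and this is in my view the crux of the conjecture.

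An alternative path that bypasses Girsanov entirely is to decompose $u^{\eps} = \widetilde u^{\eps} + w^{\eps}$, where $\widetilde u^{\eps}$ is the unforced solution and $w^{\eps}$ satisfies a pathwise (noise-free) Navier-Stokes-type equation driven by $f^{\eps}$ with nonlinear coupling $\widetilde u^{\eps}\cdot\nabla w^{\eps} + w^{\eps}\cdot\nabla u^{\eps}$. The target quantity then splits as $\eps\int_0^T \|\nabla u^{\eps}_s\|^2_{L^2(\Gamma_{c\eps})}\,ds \lesssim \eps\int_0^T \|\nabla \widetilde u^{\eps}_s\|^2_{L^2(\Gamma_{c\eps})}\,ds + \eps\int_0^T \|\nabla w^{\eps}_s\|^2_{L^2(\Gamma_{c\eps})}\,ds$, the first summand being controlled directly by weak Kato and the second, in principle, by deterministic energy methods. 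The principal difficulty here is that $w^{\eps}$ is transported by the random rough field $\widetilde u^{\eps}$, whose $H^1$-norm is a priori only $O(\eps^{-1/2})$ in expectation, so a uniform-in-$\eps$ boundary-layer estimate on $w^{\eps}$ is far from automatic. In either route, it is the reconciliation of the Gaussian tilting with boundary-layer concentration that lies at the heart of the problem.
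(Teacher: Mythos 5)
This statement is a \emph{conjecture} in the paper: the authors offer no proof, and they explicitly flag the same obstruction you run into, namely that passing between forcings via a Girsanov transformation produces a correction that ``explodes exponentially fast in the limit $\eps\rightarrow 0$'' and can only be hoped to be compensated, a posteriori, by the exponential decay that the LDP itself would provide. Your proposal is therefore not a proof. The Girsanov route stalls exactly where you say it does: H\"older gives $\mathbb{P}(A^\delta_\eps)\le \widetilde{\mathbb{E}}_\eps[D_\eps^{-p}]^{1/p}\widetilde{\mathbb{P}}_\eps(A^\delta_\eps)^{1/q}$ with $\widetilde{\mathbb{E}}_\eps[D_\eps^{-p}]^{1/p}$ of order $e^{CN/\eps}$, while the weak Kato condition yields only a qualitative $\widetilde{\mathbb{P}}_\eps(A^\delta_\eps)\to 0$ with no rate, so the product is uncontrolled. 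Upgrading weak Kato to an exponential concentration bound $\widetilde{\mathbb{P}}_\eps(A^\delta_\eps)\le e^{-c(\delta)/\eps}$ is not something the hypothesis provides, and establishing it would amount to proving a new, strictly stronger boundary-layer estimate for the stochastic Navier--Stokes flow --- which is the open content of the conjecture, not a step one may assume. The circularity is worth making explicit: the exponential decay needed to beat the Girsanov factor is essentially the upper large-deviation bound one is trying to prove.

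Your alternative decomposition $u^\eps=\widetilde{u}^\eps+w^\eps$ also terminates at an acknowledged difficulty rather than a conclusion: the equation for $w^\eps$ is transported by the random field $\widetilde{u}^\eps$, whose gradient is only $O(\eps^{-1/2})$ in $L^2_{\omega,t,x}$ by the energy estimate, so no uniform-in-$\eps$ control of $\eps\int_0^T\lVert\nabla w^\eps_s\rVert^2_{L^2(\Gamma_{c\eps})}ds$ follows from deterministic energy methods. In short, your write-up is a correct and well-aligned diagnosis of why the conjecture is hard --- it matches the authors' own discussion in the final section --- but neither route closes the gap, and the statement remains open. If you intend to present this, it should be framed as an analysis of obstructions, not as a proof.
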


If the answer to this problem was positive, then we would expect to retrieve also the `full' LDP, that is, a family of LDP for the system with any forcing $f\in L^2(0, T; \0)$. This requires to be able to pass a LDP between systems with different forcings.
To see why this seems so natural, observe that every time one is able to write a family of solution $X_f^{\eps}$ to some S(P)DE depending by some forcing $f$ as a continuous transformation of a Brownian motion $J(\sqrt{\eps} W_.)$, then by an application of the contraction principle one immediately obtains a LDP for every other forcing. In our setting however, there seems to be no way of proving such a property of the LDP without requiring the strong Kato Condition, that is without having information about the convergence of the systems with different forcings when $\eps \rightarrow 0$. Note that, by the uniqueness of the solution to the Euler system in our setting, this convergence is also necessary condition for the LDP. \\
We then formulate also the following 
\begin{conjecture}
  The Large Deviations of our system hold independently of the choice of the forcing $f \in L^2(0, T; H_0)$, that is, if a LDP holds for at least one such forcing, then it holds for every other.    
\end{conjecture}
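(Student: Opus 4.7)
The plan is to transfer, rather than re-prove, the Large Deviation Principle between any two forcings by exploiting the affine structure of the deterministic forcing in the governing equation. The key algebraic identity is
\begin{equation*}
\mathcal{G}^{NS,\eps}_{g_1}(u_0,\phi) = \mathcal{G}^{NS,\eps}_{g_0}\!\left(u_0,\ \phi + \textstyle\int_0^\cdot (g_1-g_0)_s\,ds\right),
\end{equation*}
valid for every admissible input path $\phi$, with the analogous identity for the limit map $\mathcal{G}^{NS,0}$. Provided the LDP for $g_0$ is obtained via the weak convergence approach, i.e.\ from verification of \autoref{conditions weak convergence approach}, the task reduces to checking that each of the two conditions transfers from $g_0$ to $g_1$, which this identity makes routine.

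I would proceed by setting $N' := \bigl(\sqrt{N} + \|g_1-g_0\|_{L^2(0,T;H_0)}\bigr)^2$, so that $S^N + (g_1-g_0) \subset S^{N'}$ and the shifted ball is weakly closed. For Condition $(1)$, given a compact $K\subset\mathcal{E}_0^{NS}$, the identity gives
\begin{equation*}
\Gamma^{g_1}_{K,N} = \bigl\{\mathcal{G}^{NS,0}_{g_0}\bigl(x,\,\textstyle\int_0^\cdot v'_s\,ds\bigr):\ x\in K,\ v'\in S^N + (g_1-g_0)\bigr\} \subseteq \Gamma^{g_0}_{K,N'},
\end{equation*}
so $\Gamma^{g_1}_{K,N}$ lies in a compact set; its closedness in $\mathcal{E}$ follows from the sequential continuity of the Euler solution map established in \autoref{lemma convergence euler} together with weak closedness of $S^N+(g_1-g_0)$, whence compactness. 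For Condition $(2)$, given $\tilde f^\eps\in\mathcal{P}_2^N$ with $\tilde f^\eps\to\tilde f$ in law in $S^N$ and $x^\eps\to x$ in $\mathcal{E}_0^{NS}$, I would set $f^\eps := \tilde f^\eps + (g_1-g_0)\in\mathcal{P}_2^{N'}$ and $f := \tilde f + (g_1-g_0)$; by linearity $f^\eps\to f$ in law in $S^{N'}$, and Condition $(2)$ for $g_0$ at level $N'$ combined with the structural identity yields the convergence in law of $\mathcal{G}^{NS,\eps}_{g_1}(x^\eps,\sqrt{\eps}W + \int\tilde f^\eps)$ to $\mathcal{G}^{NS,0}_{g_1}(x,\int\tilde f)$. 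The lower semicontinuity of $u_0 \mapsto I^{NS,g_1}_{u_0}(v)$ follows exactly as in the proof of \autoref{main thm LDP NS}, since only joint continuity of the Euler map in the data is used. An application of \autoref{weak convergence approach abstract thm} then delivers the Laplace principle for every $g_1 \in L^2(0,T;H_0)$.

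The main obstacle is conceptual rather than technical. As formulated, the conjecture asks that the LDP, understood abstractly as a property of the laws of the solutions, transfer between forcings, while the argument above only transfers the sufficient conditions of \autoref{conditions weak convergence approach}. This covers every concrete route presently available to an LDP in the class of problems at hand, but does not settle the conjecture in full generality. A genuinely LDP-to-LDP argument would proceed from the Cameron--Martin/Girsanov identity
\begin{equation*}
\mathbb{E}[\varphi(u^{\eps,g_1})] = \mathbb{E}\!\left[\varphi(u^{\eps,g_0})\,\exp\!\left(\tfrac{1}{\sqrt{\eps}}\textstyle\int_0^T\!\langle g_1-g_0,dW\rangle_{H_0} - \tfrac{1}{2\eps}\int_0^T\!\|g_1-g_0\|_{H_0}^2\,ds\right)\right]
\end{equation*}
and would combine it with Varadhan's integral lemma applied to a joint LDP for the pair $\bigl(u^{\eps,g_0},\,\sqrt{\eps}\int_0^\cdot \langle g_1-g_0,dW\rangle_{H_0}\bigr)$; producing this joint LDP is the heart of the difficulty and typically falls back, circularly, onto a weak-convergence-style verification.
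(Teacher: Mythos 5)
This statement is posed in the paper as an open conjecture: there is no proof of it in the paper to compare against, only a heuristic discussion of why the naive transfer fails. Your proposal does not close the conjecture either — you say so yourself — but it is worth pinning down exactly where the gap sits. The shift identity $\mathcal{G}^{NS,\eps}_{g_1}(u_0,\phi)=\mathcal{G}^{NS,\eps}_{g_0}(u_0,\phi+\int_0^\cdot(g_1-g_0)_s\,ds)$ and the resulting transfer of Conditions (1) and (2) of \autoref{conditions weak convergence approach} between forcings is correct, but it is not new relative to the paper: it is essentially the content of \autoref{indifference to forcing}, and what it establishes is the implication ``\autoref{conditions weak convergence approach} verified for $g_0$ at all levels $N$ $\Rightarrow$ LDP for every $g_1$.'' The conjecture's hypothesis is strictly weaker: it assumes only that an LDP holds for one forcing, i.e., a statement about the exponential asymptotics of the laws of the uncontrolled solutions. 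An LDP does not imply the weak-convergence conditions — Condition (2) in particular is an assertion about convergence of the controlled systems $\mathcal{G}^{\eps}\bigl(x^\eps,\sqrt{\eps}W+\int_0^\cdot u^\eps_s\,ds\bigr)$ for arbitrary $u^\eps\in\mathcal{P}_2^N$, which cannot be extracted from knowledge of the uncontrolled laws alone; in this paper that assertion is precisely what forces the Strong Kato Hypothesis. So your first argument starts from a hypothesis stronger than the one the conjecture grants, and consequently proves a true but different (and already recorded) statement.

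Your second half correctly identifies the genuine route — Cameron--Martin/Girsanov combined with Varadhan's integral lemma — and correctly identifies why it stalls: the Girsanov density $\exp\bigl(\eps^{-1/2}\int_0^T\langle g_1-g_0,dW\rangle_{H_0}-\tfrac{1}{2\eps}\int_0^T\lVert (g_1-g_0)_s\rVert^2_{H_0}\,ds\bigr)$ lives exactly at the large-deviation scale $e^{C/\eps}$, so Varadhan's lemma would have to be applied to a functional of a \emph{joint} LDP for the pair consisting of $u^{\eps,g_0}$ and the rescaled stochastic integral, and no such joint LDP follows from the marginal LDP assumed in the conjecture; one would additionally need an exponential moment bound to handle the unboundedness of the exponent. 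This is the same obstruction the paper itself points to, and it remains open. In short: the proposal contains a correct diagnosis of the difficulty, not a proof, and the reduction it does carry out does not reach the conjecture's actual hypothesis.
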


\begin{acknowledgements}
    We thank Professor Franco Flandoli for useful discussions and valuable insight into the subject.
\end{acknowledgements}

\bibliography{bibl}{}
\bibliographystyle{plain}

\end{document}